\newtheorem{thm}{Theorem} [section]
\newtheorem{lem}[thm]{Lemma}
\newtheorem{prop}[thm]{Proposition}
\theoremstyle{definition}
\newtheorem{example}[thm]{Example}
\newtheorem{rem}[thm]{Remark}
\numberwithin{equation}{section}
\newcommand{\A}{\mathcal A}
\newcommand{\bu}{\mathbf i^{-}}
\newcommand{\al}{\alpha}
\newcommand{\B}{{\bf B}}
\newcommand{\sB}{{\mathscr B}}
\newcommand{\C}{{\mathbb C}}
\newcommand{\D}{\mathcal{D}}
\newcommand{\del}{\delta}
\newcommand{\vep}{\delta}
\newcommand{\End}{{\mathrm{End}}}
\newcommand{\F}{\mathbb{F}}
\newcommand{\ff}{\mathtt{f}}
\newcommand{\sF}{{\mathscr F}}
\newcommand{\g}{{\mathfrak g}}
\newcommand{\h}{{\mathfrak h}}
\newcommand{\hf}{{1 \over 2}}
\newcommand{\HH}{\mathcal H}
\newcommand{\Hom}{{\mathrm{Hom}}}
\newcommand{\id}{\mathbbm{1}}
\newcommand{\Ind}{{\mathrm{Ind}}}
\newcommand{\la}{\lambda}
\newcommand{\mi}{\mathbf{i}}
\newcommand{\mj}{\mathbf{j}}
\newcommand{\mk}{\mathbf{k}}
\newcommand{\ml}{\mathbf{l}}
\newcommand{\N}{{\mathbb N}}
\newcommand{\Ob}{\mathscr O}
\newcommand{\OO}{\mathcal O}
\newcommand{\Pro}{\mathrm{Pr}}
\newcommand{\Q}{\mathbb {Q}}
\newcommand{\q}{\mathbf{q}}
\newcommand{\Sc}{\mathcal S}
\newcommand{\Scg}{{\mathcal S}_q'} 
\newcommand{\T}{\mathbb T}
\newcommand{\Z}{{\mathbb Z}}
\newcommand{\nc}{\newcommand}
\nc{\browntext}[1]{\textcolor{brown}{#1}}
\nc{\greentext}[1]{\textcolor{green}{#1}}
\nc{\redtext}[1]{\textcolor{red}{#1}}
\nc{\bluetext}[1]{\textcolor{blue}{#1}}
\nc{\brown}[1]{\browntext{ #1}}
\nc{\green}[1]{\greentext{ #1}}
\nc{\red}[1]{\redtext{ #1}}
\nc{\blue}[1]{\bluetext{ #1}}
\title[The $q$-Schur algebras and  dualities of finite type]
{The $q$-Schur algebras and $q$-Schur dualities \\ of finite type}
\author[Li Luo]{Li Luo}
\address{School of mathematical Sciences,
Shanghai Key Laboratory of Pure Mathematics and Mathematical Practice,
    East China Normal University, Shanghai 200241, China}
\email{lluo@math.ecnu.edu.cn (Luo)}
\author[Weiqiang Wang]{Weiqiang Wang}
\address{ Department of Mathematics\\ University of Virginia\\ Charlottesville, VA 22904}
\email{ww9c@virginia.edu (Wang)}
\keywords{Hecke algebras, $q$-Schur algebras, canonical basis}
\subjclass[2010]{Primary 17B10}
\begin{document}

\begin{abstract}
We formulate a $q$-Schur algebra associated to an arbitrary $W$-invariant finite set $X_{\texttt f}$ of integral weights for a complex simple Lie algebra with Weyl group $W$. We establish a $q$-Schur duality between the $q$-Schur algebra and Hecke algebra associated to $W$. We then realize geometrically the $q$-Schur algebra and duality, and construct a canonical basis for the $q$-Schur algebra with positivity. With suitable choices of $X_{\texttt f}$ in classical types, we recover the $q$-Schur algebras in the literature. Our $q$-Schur algebras are closely related to the category $\mathcal O$, where the type $G_2$ is studied in detail.
\end{abstract}

\maketitle
\setcounter{tocdepth}{1}
\tableofcontents

\section{Introduction}

\subsection{}

The $q$-Schur algebra (of type A) admits a multiple of formulations.
An algebraic (or DJ) definition in terms of permutation modules of the Hecke algebra was given by Dipper and James \cite{DJ89}, and a geometric (or BLM) definition in terms of $n$-step flags was given by Beilinson, Lusztig and MacPherson \cite{BLM90}. The $q$-Schur duality is a double centralizer property between a $q$-Schur algebra and a Hecke algebra; there is also a version of the $q$-Schur duality due to Jimbo \cite{Jim86} where the $q$-Schur algebra is replaced by a quantum group of type A. A geometric realization of the $q$-Schur duality was given in \cite{GL92}. The $q$-Schur algebra admits a canonical basis in the geometric setting \cite{BLM90}, and an algebraic construction of the canonical basis was given in \cite{Du92}. We refer to the book \cite{DDPW} for a comprehensive account of the (type A) algebraic and geometric constructions.

The classical counterpart of the $q$-Schur algebras is known as Schur algebras, and they arise in the celebrated Schur $(GL(n), S_d)$-duality or $(\mathfrak{gl}(n), S_d)$-duality, between the general linear group $GL(n)$ (or the general linear Lie algebra $\mathfrak{gl}(n)$) and the symmetric group $S_d$.

There has been various generalizations of Schur algebras and $q$-Schur algebras in the literature. In a series of papers starting in \cite{Don86} Donkin formulated a family of generalized Schur algebras associated to a reductive group $G$ of arbitrary type (in place of $GL(n)$ above) and a weight interval of $G$. These algebras are quasi-hereditary algebras and play a basic role in representation theory of algebraic groups. The $q$-deformations of these generalized Schur algebras and their presentations were formulated by Doty \cite{Dot03}; being a quotient of a modified quantum group by a based ideal, these $q$-Schur algebras admit canonical bases (inherited from Lusztig's canonical basis of the modified quantum group). Some of these $q$-Schur algebras admit geometric realization \cite{Li10, DL14} via variants of quiver varieties which are more sophisticated than flag varieties.

As a generalization of \cite{DJ89}, a version of $q$-Schur algebras as centralizer algebras of certain modules over Hecke algebras of type B was formulated independently by Dipper-James-Mathas and Du-Scott \cite{DJM98a, DS00} (also cf. \cite{DJM98b} for a further cyclotomic generalization).

\subsection{}

The goal of this paper is to formulate and study a new family of $q$-Schur algebras of arbitrary finite type (over a ring $\A=\Z[q,q^{-1}]$ or a field $\Q(q)$). In contrast to the settings of the aforementioned works of Donkin and Doty, the type here refers to the type of Weyl groups $W$ (or the associated Hecke algebras) which replace $S_d$ in the classical Schur duality. Another data underlying our constructions is a $W$-invariant weight set for a Lie algebra $\mathfrak g$ whose Weyl group is $W$. (Beware that $\mathfrak g$ has nothing to do with the group $G$ in Donkin's setting. For example, in the standard Schur duality example, $\mathfrak g= \mathfrak{gl}(d)$ while $G=GL(n)$.) Our constructions are motivated by and have applications to the BGG category $\mathcal O$ of $\mathfrak g$-modules.

A type B generalization of the $q$-Schur algebras \`a la Dipper-James, different from \cite{DJM98a, DS00}, was given earlier by R.~Green \cite{Gr97} (who called it a hyperoctahedral Schur algebra).
Our interest in the $q$-Schur algebras of type B \`a la R.~Green and beyond was stimulated by its relevance to the category $\OO$ and a Jimbo type duality involving Hecke algebra of type B \cite{BW18}. A geometric realization \`a la BLM was subsequently given in \cite{BKLW18} using the type B/C flags (also cf. \cite[Appendix A]{FLLLW}), where canonical bases were constructed. We refer to \cite{FL15, Bao17} for the type D generalizations. All these constructions (on $q$-Schur algebra level) are special cases of our constructions in this paper in classical types.

In this paper we develop both algebraic (\`a la DJ) and geometric (\`a la BLM) approaches to such $q$-Schur algebras of arbitrary finite type and their canonical bases; we develop both approaches to a $q$-Schur duality between a $q$-Schur algebra and a Hecke algebra.
We also establish connections of $q$-Schur algebras to the BGG category $\OO$, treating the type $G_2$ in detail. Since we deal with arbitrary finite (including exceptional) types in this paper, our constructions are restricted to the $q$-Schur algebra level and cannot be formulated on the quantum group level.

Informally speaking, Donkin-Doty ($q$-)Schur algebras and our $q$-Schur algebras are generalizations of the type $A$ Schur duality from the two distinct  ``Schur dual'' sides, respectively.  Already in type $A$, our generalization does not coincide with theirs; see Example~\ref{ex:S=H}.
Our $q$-Schur algebra always affords naturally a Schur duality with a Hecke algebra, which were not available in general in the Donkin-Doty setting. To the best knowledge of the authors, the $q$-Schur algebras in this paper and the $q$-Schur algebras of Donkin-Doty are not related beyond type $A$. Our $q$-Schur algebras of type B/C also differ from those in \cite{DJM98a, DS00}; the $q$-Schur algebras {\em loc. cit.} are quasi-hereditary while ours are not in general.

\subsection{}
Let us explain our constructions in some detail.
Let $\g$ be an arbitrary complex simple Lie algebra with $W$ as its Weyl group. Our definition of the $q$-Schur algebra $\Sc_q =\Sc_q(X_\ff)$ relies on a choice of a finite set $X_\ff$ of integral weights for $\g$ which is invariant under the action of $W$, as the constructions are intimately related to category $\OO$ of $\g$-modules. We define a module $\T_\ff$ of the Hecke algebra $\HH =\HH_W$ with a basis parametrized by $X_\ff$. This is carried out in Section~\ref{sec:module}. The module $\T_\ff$ can always be decomposed into a direct sum of permutation modules; this crucial property is not shared by the Hecke module used in  \cite{DJM98a, DJM98b, DS00}. The space $\T_\ff$ should be thought as a ``$q$-Grothendieck group" of a truncated version of the BGG category $\OO$, and the permutation module summands should be thought as ``blocks".

The $q$-Schur algebra $\Sc_q$  is by definition the algebra of $\HH$-linear endomorphisms of the $\HH$-module $\T_\ff$. In Section~\ref{sec:Schur}, we construct a (standard) $\A$-basis for the algebra $\Sc_q$. We establish the $q$-Schur duality that $\Sc_q$ and $\HH$ form double centralizers  in $\text{End}_\A(\T_\ff)$. The bar involutions on $\T_\ff$ and on $\Sc_q$ are defined, and the commuting actions of $\Sc_q$ and $\HH$ on $\T_\ff$ commute with the respective bar maps. By studying the property of the bar map acting on the standard basis, we construct a canonical basis for $\Sc_q$.

In \cite[Example~ 1.4]{Du94}, Du defined a particular $q$-Schur algebra of arbitrary finite type, and constructed its canonical basis. His example is a special case of our general algebraic constructions in this paper, and  its geometric and category $\mathcal O$ connections were not suspected or explained until now. (We thank Jie Du for bringing this  reference to our attention after we finished our work.)

According to Iwahori, the Hecke algebra $\HH$ can be realized via complete flags over finite fields. In Section~\ref{sec:geom}, we give a geometric construction of an $\A$-algebra $\Scg =\Scg(X_\ff)$ in terms of a collection of parabolic flag varieties, whose multiplicities of a given parabolic type are dictated by the set $X_\ff$. We construct an $(\Scg, \HH)$-bimodule $\T_\ff'$ geometrically via convolution products. We establish an algebra isomorphism $\Scg \cong \Sc_q$ and then a bimodule isomorphism $\T_\ff' \cong \T_\ff$. This provides a geometric realization of the $q$-Schur duality. Under the identification $\Scg \cong \Sc_q$, the canonical basis of $\Sc_q$ affords a geometric realization and hence admits some favorable positivity properties.

With suitable choices of the set $X_\ff$ in the classical type, our constructions reproduce the algebraic and geometric constructions in the papers mentioned earlier. Our approach is in turn a synthesis of the earlier DJ and BLM type approaches via purely Lie theoretic terms (such as weights, cosets, parabolic subgroups); it is not essential to use explicitly a tensor product module as in DJ's algebraic approach or ``$n$-step flags" as in BLM's geometric approach. The works of Du and Green \cite{Du92, Gr97} have been also very helpful in our understanding of the structures of $q$-Schur algebras.

The above $q$-constructions remain valid in the specialization at $q=1$, so we obtain a construction of Schur algebras and Schur duality of arbitrary type.

In Section~\ref{sec:G2}, we specialize to the type $G_2$ and study the $q$-Schur algebra of type $G_2$ in depth. Specifying the set $X_\ff =X_n$ (for integers $n\ge 2$), we denote the Schur algebra and its module by $\Sc_q(n)$ and $\T_n$, respectively.  We obtain a set of algebra generators (denoted by $e_a, f_a, t$ for some indices $a$) for the $\Q(q)$-Schur algebras. Let $\OO_n$ denote a version of the BGG category whose irreducible $G_2$-modules are parametrized by weights in $X_n$. Under a natural identification of the Grothendieck group $[\OO_n]$ with $\T_n|_{q=1}$, we establish an identification between the translation functors on $\OO_n$ with the generators $e_a, f_a, t$. We note that the idempotented (i.e., block) versions of $e_a, f_a, t$ are canonical bases of $\Sc_q(n)$.

Some detailed computations and formulas in type $G_2$ are collected in Appendix~\ref{sec:app}.

\subsection{}

We discuss below several interesting questions arising from this work.

For type BFG, the Hecke algebra $\HH$ admits a 2-parameter generalization. The 2-parameter $q$-Schur algebra of type B has been studied in \cite{Gr97}, and as explained in \cite{BWW18}, this is closely related to a unified approach toward the type B and type D Kazhdan-Ludztig theory \cite{BW18, Bao17}.  The $q$-Schur algebras and dualities in this paper can be formulated in the 2-parameter setting. It will be interesting to see if one can construct a canonical basis (when specifying the second parameter as an integrer power of $q$), though we will lose the positivity of the canonical basis in general and lose the connection to category $\OO$. It will be also interesting to study the cell and cellular algebra structures for the  (equal or unequal parameter) $q$-Schur algebras.

The precise connections with category $\OO$ of exceptional types (other than $G_2$) remain to be developed. Also it is interesting to find a presentation of the $q$-Schur algebras (over $\A$ or over $\Q(q)$).
Already in type $G_2$ it will be interesting to establish a generating set for the $q$-Schur algebra over $\A$ (not just over $\Q(q)$).  It remains to develop fully the connection between canonical bases of $q$-Schur algebras and translation functors; cf. Remark~\ref{rem:typeEF}.

In another direction, we hope our study of the type $G_2$ $q$-Schur algebras might  shed some light on a Kazhdan-Lusztig theory for the exceptional Lie superalgebra $G(3)$.


\vspace{2mm}
\noindent {\bf Acknowledgement.}
LL is partially supported by the Science and Technology Commission of Shanghai Municipality (grant No.
18dz2271000) and the NSF of China (grant No. 11671108, 11871214). WW is partially supported by the NSF grant DMS-1702254.
The starting point of this paper is the computations in type $G_2$, which were initiated during LL's visit to University of Virginia (UVA) in 2013-14. We thank both UVA and East China Normal University for hospitality and support.

\section{A Hecke module parametrized by weights}
  \label{sec:module}

In this section, we introduce a module of the Hecke algebra parametrized by a given $W$-invariant subset of integral weights.

\subsection{The preliminaries}

Let $\g$ be a complex simple Lie algebra of rank $d$ of arbitrary finite type, and let $\h\subset \g$ be a Cartan subalgebra.  Let $(\cdot,\cdot)$ denote the Killing form in $\h^*$, and $X$ be the weight lattice in $\h^*$. Fix a simple system $\Pi=\{\alpha_1,\ldots,\alpha_d\}$. The set of anti-dominant integral weights relative to $\Pi$ is denoted by $X^-$, i.e.,
\begin{equation}
 \label{eq:anti}
X^-=\{\mi\in X~|~(\mi,\alpha_i)\leq0, \forall i=1,2,\ldots,d\}.
\end{equation}

There is a partial order ``$\preceq$'' on $X$
defined by
\begin{equation}
\mi\preceq\mj \quad \Leftrightarrow \quad  \mj-\mi\in\sum_{i=1}^d\N\al_i.
\end{equation}


The Weyl group $W$ of $\g$ is generated by the simple reflections
$s_1,s_2,\ldots,s_d$ with identity $\id$. Let $m_{ij}$ denote the order of $s_is_j$ in $W$, for $i\neq j$.
The length of $w\in W$ is denoted by $\ell(w)$ and the Bruhat order on $W$ is denoted by ``$<$''.

Let $q$ be an indeterminate and let
\[
\A=\Z[q,q^{-1}].
\]
The Hecke algebra $\HH$ (associated to $W$) is an $\A$-algebra generated by $H_1,H_2,\ldots,H_d$ with relations
\begin{equation}
(H_i-q^{-1})(H_i+q)=0;\quad (H_iH_j)^{m_{ij}}=\id, \quad (1\leq i\neq j\leq d).
\end{equation}
For a reduced word $w=s_{i_1}s_{i_2}\cdots s_{i_l}\in W$, we set $H_w=H_{i_1}H_{i_2}\cdots H_{i_l}$.
Moreover, for any subset $Y\subseteq W$, we set
\begin{equation}\label{HY}
H_Y=\sum_{w\in Y}q^{-\ell(w)}H_w.
\end{equation}

\subsection{The Hecke modules $\T$ and $\T_{\ff}$}

There is a natural right action of $W$ on $X$ defined by sending $\mi \in X \mapsto \mi w=w^{-1}(\mi).$ Let us take any $W$-invariant finite subset
\begin{equation}\label{eq:Pf}
X_\ff \subset X.
\end{equation}
We shall discuss the choices of $X_\ff$ below in \S\ref{subsec:Pf}.


We introduce the following free $\A$-modules
\begin{equation}\label{eq:Tn}
\T =\bigoplus_{\mi\in X} \A v_\mi,
\qquad
 \T_{\ff} =\T_{X_\ff} = \bigoplus_{\mi\in X_{\ff}} \A v_\mi,
\end{equation}
with bases given by the symbols $v_\mi$, for $\mi\in X$ and $\mi\in X_{\ff}$, respectively.
We shall refer to $\{v_\mi\}$ as the {\em standard basis} for $\T$ or for $\T_{\ff}$.
We also define
\begin{equation}\label{ZformT}
\T^1=\bigoplus_{\mi\in X}\Z v_\mi, \quad \quad \T_{\ff}^1=\bigoplus_{\mi\in X_{\ff}}\Z v_\mi,
\end{equation}
which are the specializations at $q=1$ of $\T$ and $\T_{\ff}$, respectively.

The natural right action of $W$ on $X$ induces a right action of $W$ on $\T$ (and $\T_{\ff}$) by
\begin{equation}
v_{\mi}\cdot w=v_{\mi w}.
\end{equation}
Then we define a right action of the Hecke algebra $\HH$ on $\T$ (and $\T_{\ff}$) as follows:
\begin{align}\label{Haction}
v_{\mi}H_k&=
\left\{\begin{array}{ll}
q^{-1}v_{\mi}, & \mbox{if $\mi s_k=\mi$};\\
v_{\mi s_k}, & \mbox{if $\mi s_k\succ\mi$};\\
v_{\mi s_k}+(q^{-1}-q)v_{\mi}, & \mbox{if $\mi s_k\prec\mi$},
\end{array}\right. \quad\quad(1\leq k\leq d).
\end{align}

\begin{rem}
The spaces $\T^1$ (and respectively, $\T^1_{\ff}$) can be viewed as the Grothendieck groups of the category $\mathcal O$ (and respectively, a truncated version) of $\g$-modules of weights in $X$ (and respectively, $X_\ff$); see Section~\ref{sec:G2} for a precise formulation in type $G_2$.
\end{rem}

\subsection{Choices of $X_\ff$}
  \label{subsec:Pf}

There are natural choices for $X_\ff$; on the other hand, the choices are flexible and far from being unique.

In type $A_{d-1}$ (note we switch here from rank $d$ to $d-1$), we take $X =\sum_{i=1}^d \Z \vep_i$ to be the weight lattice for $GL(d)$, where $\{ \vep_i \}$ forms its standard basis. For any positive integer $n\ge d$, we can choose
\[
X_n = \big \{\sum_{i=1}^d a_i \vep_i~|~ a_i \in \Z, 1 \le a_i \le n, \forall i\big \}.
\]
We could have shifted the indices to consider ${}'X_n = \big \{\sum_{i=1}^d a_i \vep_i~|~ a_i \in \Z, 1- \lfloor {n\over 2} \rfloor \le a_i \le \lceil {n\over 2} \rceil, \forall i\big \}.$ The corresponding $\HH$-modules $\T_{X_n}$ and $\T_{{}'X_n}$ are isomorphic, and both can be naturally identified with $\mathbb V^{\otimes d}$ for a free $\A$-module $\mathbb V$ of rank $n$.
Note ${}'X_n \subset {}'X_{n+1}$ and  $X =\cup_n {}'X_n$.

In type $B_{d}$, $C_d$, or $D_d$, we let $X^0 =\sum_{i=1}^d \Z \vep_i$, and $X^{\hf} =\sum_{i=1}^d (\hf+\Z) \vep_i$, so that $X = X^{0} \oplus X^{\hf}$. For any positive integer $n$ with $2n\ge d$, we choose
\begin{align*}
X_n^{0} &= \big \{\sum_{i=1}^d a_i \vep_i~|~ a_i \in \Z, -n \le a_i \le n, \forall i\big \},
\\
X_n^{\hf} &= \big \{\sum_{i=1}^d a_i \vep_i~|~ a_i \in \hf+\Z, -n \le a_i \le n, \forall i\big \}.
\end{align*}
The $\HH$-modules $\T_{X_n^0}$ and $\T_{X_n^{\hf}}$ can be identified with $\mathbb V^{\otimes d}$ for a free $\A$-module $\mathbb V$ of rank $2n+1$ and $2n$, respectively.
Note that $X_n^{0} \subset X_{n+1}^{0}$ and  $X^{0} =\cup_n X_n^{0}$;  similarly, we have $X_n^{\hf} \subset X_{n+1}^{\hf}$ and  $X^{\hf} =\cup_n X_n^{\hf}$.

For type $G_2$, we have some natural choices for $X_\ff$; see Section~\ref{sec:G2}.

For types $F_4$, or $E_{6,7,8}$, again we can choose suitable finite subsets $X_n$ (for varied $n$) with $X_n \subset X_{n+1}$ and $X=\cup_{n} X_n$. As we do not study their connections to category $\OO$ later on,  there is no need to get into the detailed choices here.

The choices in classical types above and in $G_2$ are motivated by considerations in the BGG category $\OO$ and related geometric setting via flag varieties.  As we shall see, the above choices in classical types are compatible with various earlier algebraic, geometric and categorical constructions in the literature; see \cite{DJ89, BLM90} for type A,  \cite{Gr97, BW18, BKLW18} for type B/C, and \cite{FL15, Bao17} for type D.

\section{The $q$-Schur algebras and dualities}
 \label{sec:Schur}

In this section, we introduce the $q$-Schur algebra $\Sc_q$ associated to a given $W$-invariant finite set of integral weights, and establish the $q$-Schur duality between the $q$-Schur algebra $\Sc_q$ and the Hecke algebra. We then construct several variations of the standard basis for $\Sc_q$, and then we prove the existence of the canonical basis for $\Sc_q$.

\subsection{The definition and the first properties}

Fix $X_{\ff}$ in \eqref{eq:Pf} and recall $\T_{\ff}$ from \eqref{eq:Tn}.
Introduce the following algebras (over $\A$ and $\Q(q)$, respectively):
\[
\Sc_q = \Sc_q(X_\ff) :=\End_{\HH}(\T_{\ff}), \qquad \Sc_{q,\Q} :=\Q(q)\otimes_\A \Sc_q.
\]
The algebras $\Sc_{q}$ and $\Sc_{q,\Q}$ will be called $q$-Schur algebras; they depend on the choices of $W$ and $X_\ff$.

For $\mi,\mj\in X_{\ff}$, we define the coordinate functions
\[
c_{\mi,\mj}: \End_\A(\T_{\ff}) \rightarrow \A, \qquad \eta \mapsto c_{\mi,\mj}(\eta),
\]
via
\begin{equation}
\eta(v_{\mj})=\sum_{\mi\in X_{\ff}}c_{\mi,\mj}(\eta)v_{\mi}.
\end{equation}

Let us describe the elements in $\Sc_{q}$ in terms of the coordinate functions.

\begin{lem}
 \label{lem:coord}
Let $\eta\in \End_\A(\T_{\ff})$. Then we have $\eta\in \Sc_{q}$ if and only if
\begin{equation}\label{eq:cij}
c_{\mi,\mj}(\eta)=\left\{\begin{array}{ll}q^{-1} c_{\mi s_k,\mj}(\eta) & \mbox{ if } \mi s_k \prec \mi, \mj s_k=\mj,\\
q^{-1} c_{\mi,\mj s_k}(\eta) & \mbox{ if } \mi s_k = \mi, \mj s_k\prec\mj,\\
c_{\mi s_k,\mj s_k}(\eta) & \mbox{ if } \mi s_k\succ \mi, \mj s_k\prec\mj,\\
c_{\mi s_k,\mj s_k}(\eta)+(q^{-1}-q)c_{\mi,\mj s_k}(\eta) & \mbox{ if } \mi s_k \prec \mi, \mj s_k\prec\mj\\
\end{array}\right.
\end{equation}
for any $\mi,\mj\in X_{\ff}$ and $1\leq k\leq d$.
\end{lem}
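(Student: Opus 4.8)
The plan is to characterize membership $\eta \in \Sc_q = \End_\HH(\T_\ff)$ by testing the commutation relation $\eta(v_\mj H_k) = \eta(v_\mj) H_k$ against the standard basis $\{v_\mj\}_{\mj \in \XX_\ff}$ and each Hecke generator $H_k$, $1 \le k \le d$; since the $H_k$ generate $\HH$ and the $v_\mj$ span $\T_\ff$, this family of equations is equivalent to $\HH$-linearity. So first I would fix $\mj \in \XX_\ff$ and $k$, split into the three cases of the action formula \eqref{Haction} according to whether $\mj s_k = \mj$, $\mj s_k \succ \mj$, or $\mj s_k \prec \mj$, and in each case expand both sides of $\eta(v_\mj H_k) = \eta(v_\mj) H_k$ in the standard basis, then read off the coefficient of a fixed $v_\mi$.

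The key computation is the right-hand side: $\eta(v_\mj) H_k = \sum_{\mi} c_{\mi,\mj}(\eta)\, v_\mi H_k$, and one must regroup this sum by collecting, for each target basis vector, the contributions coming from $v_\mi$ and from $v_{\mi s_k}$ via \eqref{Haction}. Because the map $\mi \mapsto \mi s_k$ is an involution on $\XX_\ff$ pairing up weights (or fixing them), the weights $\mi$ with $\mi s_k = \mi$, with $\mi s_k \succ \mi$, and with $\mi s_k \prec \mi$ partition $\XX_\ff$, and each pair $\{\mi, \mi s_k\}$ with $\mi s_k \prec \mi$ contributes to both $v_\mi$ and $v_{\mi s_k}$. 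Matching the coefficient of $v_\mi$ on the two sides, and organizing the four possible combinations of ($\mi s_k$ vs.\ $\mi$) with ($\mj s_k$ vs.\ $\mj$) — noting that the case $\mi s_k = \mi$, $\mj s_k = \mj$ is automatically satisfied and the case $\mi s_k \succ \mi$, $\mj s_k = \mj$ (and its transpose $\mi s_k = \mi$, $\mj s_k \succ \mj$, and $\mi s_k \succ \mi$, $\mj s_k \succ \mj$) turn out to be consequences of the displayed ones after relabeling $\mi \leftrightarrow \mi s_k$, $\mj \leftrightarrow \mj s_k$ — yields exactly the four identities in \eqref{eq:cij}.

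Conversely, if \eqref{eq:cij} holds for all $\mi, \mj, k$, I would run the same expansion backwards: for each $\mj$ and $k$, verify $\eta(v_\mj H_k) = \eta(v_\mj) H_k$ by checking equality of the $v_\mi$-coefficients using \eqref{eq:cij}, and conclude $\eta$ commutes with every $H_k$, hence $\eta \in \Sc_q$. The main obstacle, and the only place care is really needed, is the bookkeeping in the case $\mj s_k \prec \mj$: there the term $(q^{-1}-q) v_\mj$ in $v_\mj H_k$ feeds an extra summand $(q^{-1}-q) \eta(v_\mj) = (q^{-1}-q)\sum_\mi c_{\mi,\mj}(\eta) v_\mi$ into the left-hand side, and simultaneously the right-hand side $\eta(v_\mj) H_k$ produces $(q^{-1}-q)$-terms from every $v_\mi$ with $\mi s_k \prec \mi$; reconciling these (and being careful that $\mj s_k \prec \mj$ is consistent with the first line of \eqref{eq:cij} applied at the weight $\mj s_k$, so that no contradiction arises) is what produces the fourth line of \eqref{eq:cij}. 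Everything else is a routine, if slightly tedious, matching of coefficients.
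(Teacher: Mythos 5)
Your proposal is correct and follows essentially the same route as the paper: expand both sides of $\eta(v_\mj H_k)=\eta(v_\mj)H_k$ in the standard basis using \eqref{Haction}, reindex along the involution $\mi\mapsto \mi s_k$, and compare coefficients of $v_\mi$, with the remaining case combinations being automatic or obtained by relabeling. The bookkeeping you flag in the case $\mj s_k\prec\mj$ is exactly the computation carried out in the paper's proof.
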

\begin{proof}
By definition, we have $\eta\in \Sc_{q}$ if and only if the action of $\eta$ on $\T_{\ff}$ commutes with the action of the generators $H_k (1\leq k\leq d)$ of $\HH$.
Write $c_{\mi,\mj}=c_{\mi,\mj}(\eta)$ for short. We calculate that
\begin{align*}
(\eta & (v_\mj))H_k =(\sum_{\mi\in X_{\ff}}c_{\mi,\mj}v_{\mi})H_k\\
&=
\sum_{\mi\in X_{\ff}, \mi s_k = \mi}c_{\mi,\mj}q^{-1}v_{\mi}+\sum_{\mi\in X_{\ff}, \mi s_k \succ \mi}c_{\mi,\mj}v_{\mi  s_k}
+\sum_{\mi\in X_{\ff}, \mi s_k \prec \mi}c_{\mi,\mj}(v_{\mi  s_k}+(q^{-1}-q)v_{\mi})\\
&=
\sum_{\mi\in X_{\ff}, \mi s_k = \mi}c_{\mi,\mj}q^{-1}v_{\mi}+\sum_{\mi\in X_{\ff}, \mi s_k \succ \mi}c_{\mi  s_k,\mj}v_{\mi}
+\sum_{\mi\in X_{\ff}, \mi s_k \prec \mi}(c_{\mi s_k,\mj}+c_{\mi,\mj}(q^{-1}-q))v_{\mi},
\end{align*}
and
\begin{eqnarray*}
\eta((v_\mj)H_k)=\left\{\begin{array}{ll}\eta(q^{-1}v_{\mj})=
\sum_{\mi\in X_{\ff}}c_{\mi,\mj}q^{-1}v_{\mi}, & \mbox{if $\mj s_k=\mj$,}\\
\eta(v_{\mj s_k})=
\sum_{\mi\in X_{\ff}}c_{\mi,\mj s_k}v_{\mi}, & \mbox{if $\mj s_k\succ\mj$,}\\
\eta(v_{\mj s_k}+(q^{-1}-q)v_{\mj})=\sum_{\mi\in X_{\ff}}(c_{\mi,\mj s_k}+(q^{-1}-q)c_{\mi,\mj})v_{\mi},&\mbox{if $\mj s_k\prec\mj$}.
\end{array}\right.
\end{eqnarray*}
Comparing the coefficients for $(\eta(v_\mj))H_k$ and $\eta((v_\mj)H_k)$ proves the lemma.
\end{proof}

Denote by $X_{\ff}^{(2)}$ the subset of $X_{\ff}\times X_{\ff}$ consisting of elements $(\mi,\mj)$
satisfying the following conditions:
\begin{itemize}
\item[(i)] $\mj\in X_{\ff}^-$;

\item[(ii)] For any $1\leq k\leq d$, if $\mj s_k=\mj$ then $\mi s_k\succ\mi$.
\end{itemize}
Note that $X_{\ff}^{(2)}$ is a transversal for the $W$-orbits on $X_{\ff}\times X_{\ff}$.

For each $(\mi,\mj)\in X_{\ff}^{(2)}$, we define the element $\eta_{\mi,\mj}\in \End_\A(\T_{\ff})$ by
\begin{equation}\label{def:eta}
c_{\mk,\ml}(\eta_{\mi,\mj})=
\left\{\begin{array}{ll} 1& \mbox{if $(\mi,\mj)= (\mk,\ml)$,}\\
0 & \mbox{otherwise,}
\end{array}\right. \quad \forall(\mk,\ml)\in X_{\ff}^{(2)}.
\end{equation}

Define $\Sc_{q}^*:=\Hom_\A(\Sc_{q},\A)$, which is naturally an $\A$-module.
\begin{prop}
  \label{schurbasis}
The $\A$-module $\Sc_{q}^*$ is free with $\{c_{\mi,\mj}\mid (\mi,\mj)\in X_{\ff}^{(2)}\}$ as a basis.
The $\A$-module $\Sc_{q}$ is free with a basis $\{\eta_{\mi,\mj}\mid (\mi,\mj)\in X_{\ff}^{(2)}\}$.
\end{prop}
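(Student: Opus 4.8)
The plan is to establish the two statements in tandem, deducing the freeness of $\Sc_q$ from that of $\Sc_q^*$ together with a dimension/rank matching. First I would observe that $\End_\A(\T_\ff)$ is a free $\A$-module with basis the elementary endomorphisms indexed by $\XX_\ff \times \XX_\ff$, so the coordinate functions $\{c_{\mi,\mj} \mid (\mi,\mj) \in \XX_\ff \times \XX_\ff\}$ form the dual basis of $\End_\A(\T_\ff)^*$. The subspace $\Sc_q \subseteq \End_\A(\T_\ff)$ is cut out by the linear relations \eqref{eq:cij} of Lemma~\ref{lem:coord}, so $\Sc_q^*$ is the quotient of $\End_\A(\T_\ff)^*$ by the span of the relations, i.e. it is spanned by the images of the $c_{\mi,\mj}$ modulo the relations \eqref{eq:cij}.

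The key combinatorial step is to show that, using the four relations in \eqref{eq:cij} repeatedly, every $c_{\mi,\mj}$ can be rewritten as an $\A$-linear combination of the $c_{\mk,\ml}$ with $(\mk,\ml) \in \XX_\ff^{(2)}$. Here I would argue by a double induction: an outer induction replacing $\mj$ by its (unique) anti-dominant representative in its $W$-orbit, using the second and fourth cases of \eqref{eq:cij} to move $\mj$ down in the $\preceq$-order one simple reflection at a time (each step strictly decreases $\mj$, so this terminates at the anti-dominant element, establishing condition (i)); and then an inner induction, with $\mj \in \XX_\ff^-$ now fixed, using the first and third cases of \eqref{eq:cij} to arrange condition (ii) — namely, whenever $\mj s_k = \mj$ but $\mi s_k \prec \mi$, the first case rewrites $c_{\mi,\mj}$ as $q^{-1} c_{\mi s_k, \mj}$ with $\mi s_k \succ \mi s_k \cdot s_k = \mi$, strictly raising $\mi$ in the stabilizer-coset, so this also terminates. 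One must check that the inner reduction does not disturb condition (i), which is clear since only $\mi$ is modified. This shows $\{c_{\mi,\mj} \mid (\mi,\mj) \in \XX_\ff^{(2)}\}$ spans $\Sc_q^*$.

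For linear independence, the cleanest route is to exhibit, for each $(\mi,\mj) \in \XX_\ff^{(2)}$, an element of $\Sc_q$ on which $c_{\mi,\mj}$ takes value $1$ and on which all other $c_{\mk,\ml}$ (with $(\mk,\ml) \in \XX_\ff^{(2)}$) vanish — but this is exactly the content of the asserted basis $\{\eta_{\mi,\mj}\}$ of $\Sc_q$, so I must first produce the $\eta_{\mi,\mj}$. The idea is: given $(\mi,\mj) \in \XX_\ff^{(2)}$, define $\eta_{\mi,\mj}$ on the standard basis by setting $\eta_{\mi,\mj}(v_\mj) = v_\mi$ (more precisely $v_\mi$ times an appropriate $\A$-coefficient dictated by consistency, but since $\mj \in \XX_\ff^-$ its stabilizer acts by $q^{-1}$ and the normalization in \eqref{def:eta} is forced to be $1$), and extend to the rest of the $W$-orbit of $\mj$ by the requirement that $\eta_{\mi,\mj}$ be $\HH$-linear, i.e. by running the formulas \eqref{Haction}/\eqref{eq:cij} forward from the anti-dominant vector; set $\eta_{\mi,\mj}$ to be zero on all other $W$-orbits. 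One checks this is well-defined (the stabilizer of $\mj$ acts on $v_\mj$ by the scalar $q^{-\ell(\cdot)}$, forcing $\eta_{\mi,\mj}(v_\mj)$ to lie in a space on which the stabilizer acts by the same scalar — guaranteed precisely by condition (ii), which says $\mi s_k \succ \mi$ whenever $s_k$ fixes $\mj$, so $v_\mi H_k = q^{-1} v_\mi$ is impossible to violate... wait, we need $v_\mi H_k$ to reproduce $q^{-1} v_\mi$, and condition (ii) gives $\mi s_k \succ \mi$, meaning $v_\mi H_k = v_{\mi s_k} \neq q^{-1}v_\mi$; the resolution is that $\eta_{\mi,\mj}$ is defined on the whole orbit simultaneously and the consistency is a statement about the orbit, which I'd verify via Lemma~\ref{lem:coord}). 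Once the $\eta_{\mi,\mj}$ are shown to lie in $\Sc_q$ and to satisfy \eqref{def:eta}, the pairing matrix $\big(c_{\mi,\mj}(\eta_{\mk,\ml})\big)$ over $\XX_\ff^{(2)} \times \XX_\ff^{(2)}$ is the identity; hence the $c_{\mi,\mj}$ are linearly independent in $\Sc_q^*$ and the $\eta_{\mi,\mj}$ are linearly independent in $\Sc_q$. Combined with the spanning statement, both are bases, and moreover the perfect pairing shows the two bases are dual to each other.

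The main obstacle I anticipate is the well-definedness of $\eta_{\mi,\mj}$: one has to be careful that propagating $v_\mi$ around the $W$-orbit of $\mj$ via the Hecke relations is consistent (the orbit is not a free $W$-set when $\mj$ is singular, and the braid relations must be respected). The honest way to handle this is to not define $\eta_{\mi,\mj}$ by hand at all, but rather to note that the spanning argument already shows $\dim_{\Q(q)} \Sc_{q,\Q} \le |\XX_\ff^{(2)}|$, while the reduction procedure, read as producing a well-defined $\A$-linear functional for each $(\mi,\mj) \in \XX_\ff^{(2)}$ (by verifying the reductions give the same answer independent of choices — itself a confluence check on the rewriting system), gives $|\XX_\ff^{(2)}|$ linearly independent elements of $\Sc_q^*$, forcing equality and freeness; the dual basis $\{\eta_{\mi,\mj}\}$ of $\Sc_q$ then exists automatically over $\A$ because $\Sc_q^*$ is free and $\Sc_q = (\Sc_q^*)^*$ by the double-dual identification valid for finitely generated free modules. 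I would present the proof in this second form, sidestepping the explicit construction, so the only real work is the confluence of the four rewriting rules in \eqref{eq:cij} — which follows from the fact that they are precisely the matrix coefficients of the well-defined $\HH$-action \eqref{Haction}, together with the transversality statement that $\XX_\ff^{(2)}$ parametrizes the $W$-orbits on $\XX_\ff \times \XX_\ff$ (already asserted in the text just before \eqref{def:eta}).
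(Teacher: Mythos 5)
Your spanning argument for $\Sc_q^*$ is essentially the paper's own (reduce the second index to its anti-dominant representative using \eqref{eq:cij}, then enforce condition (ii) with the first case), and that half is fine. The genuine gap is in the linear-independence half. Your final argument rests on an unproven ``confluence'' of the rewriting system \eqref{eq:cij}: the justification you offer --- that it follows from the well-definedness of the $\HH$-action \eqref{Haction} together with the transversality of $\XX_{\ff}^{(2)}$ --- does not establish it. Well-definedness of \eqref{Haction} only says the four rules are valid identities on elements of $\Sc_q$, and transversality only gives termination of the reduction; uniqueness of the reduced expression (equivalently, the absence of further relations among the distinguished $c_{\mi,\mj}$ restricted to $\Sc_q$) is essentially the statement being proved, and verifying it amounts to producing, for each $(\mi,\mj)\in\XX_{\ff}^{(2)}$, an actual element of $\Sc_q$ dual to $c_{\mi,\mj}$ --- precisely the construction you set aside because of the well-definedness problem you yourself flagged. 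The paper escapes this circle by a short specialization trick you did not find: clear denominators so the coefficients lie in $\Z[q]$ with no common factor, set $q=1$, note that \eqref{eq:cij} at $q=1$ just says $c_{\mi,\mj}(\eta)=c_{\mi\cdot w,\mj\cdot w}(\eta)$ for $W$-equivariant $\eta$, and evaluate the putative relation on the visibly $W$-equivariant orbit-indicator endomorphisms $\eta^1_{\mk,\ml}$ of $\T_\ff^1$; this forces all coefficients to vanish at $q=1$, a contradiction. (Over $\A$ itself, the dual elements are only exhibited later, in Proposition~\ref{prop:basis}, via the permutation-module homomorphisms $\phi^g_{\gamma\nu}$.)

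A second, related flaw: your deduction that the dual basis of $\Sc_q$ ``exists automatically because $\Sc_q=(\Sc_q^*)^*$, valid for finitely generated free modules'' is circular, since freeness (more precisely reflexivity) of $\Sc_q$ is exactly what is in question; over $\A=\Z[q,q^{-1}]$ a finitely generated torsion-free module can have free dual without being free or reflexive (e.g.\ the ideal $(2,q-1)$ has dual isomorphic to $\A$). A repairable version would observe that $\Sc_q$ is the kernel of an $\A$-linear map between finite free modules, namely $\eta\mapsto(\eta\circ H_k-H_k\circ\eta)_{1\le k\le d}$, hence reflexive over the normal ring $\A$, and reflexive with free dual implies free; alternatively one gets freeness directly once the $\eta_{\mi,\mj}$ are shown to lie in $\Sc_q$, which is how the paper in effect proceeds. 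Two minor slips worth noting: in your inner reduction, case 1 replaces $\mi$ by $\mi\cdot s_k\prec\mi$, so it lowers rather than raises the first index (termination still holds), and the endpoint of that reduction is $\mi s_k\not\prec\mi$ rather than the strict inequality, which is how condition (ii) must be read for $\XX_{\ff}^{(2)}$ to be a transversal.
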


\begin{proof}
Since $\{\eta_{\mi,\mj}\mid (\mi,\mj)\in X_{\ff}^{(2)}\}$ is dual to $\{c_{\mi,\mj}\mid (\mi,\mj)\in X_{\ff}^{(2)}\}$ by definition, the second statement follows from the first one.

Let us prove the first statement.
An arbitrary element $c_{\mk,\ml}\in \Sc_{q}^*$ can be expressed as a sum of elements $c_{\mi,\mj}$ such that $\mj\in X_{\ff}^-$, thanks to \eqref{eq:cij}.
We then apply the first equation in \eqref{eq:cij} to these $c_{\mi,\mj}$ until condition (ii) in the definition of $X_{\ff}^{(2)}$ is satisfied as well. Therefore, the set $\{c_{\mi,\mj}\mid (\mi,\mj)\in X_{\ff}^{(2)}\}$ spans $\Sc_{q}^*$.

It remains to show that $\{c_{\mi,\mj}~|~ (\mi,\mj)\in X_{\ff}^{(2)}\}$ are linearly independent over $\A$. Suppose
\begin{equation}\label{indep}
\sum_{(\mi,\mj)\in X_{\ff}^{(2)}}\alpha_{\mi,\mj}c_{\mi,\mj}=0\quad\mbox{for some $\alpha_{\mi,\mj}\in\A$ which are not all zero}.
\end{equation}
Multiplying $q$-powers if necessary, we may assume that all $\alpha_{\mi,\mj}$ lie in $\mathbb{Z}[q]$ and have no common divisors. Hence $\alpha_{\mi,\mj}|_{q=1}$ are not all zero. 

Consider the specialization at $q=1$. Equation \eqref{indep} becomes
$$\sum_{(\mi,\mj)\in X_{\ff}^{(2)}}\alpha_{\mi,\mj}c_{\mi,\mj}=0\quad\mbox{for some $\alpha_{\mi,\mj}\in \mathbb{Z}$ which are not all zero.}$$
Now \eqref{eq:cij} at $q=1$ says that $c_{\mi,\mj}(\eta)=c_{\mi w,\mj w}(\eta)$ for $w\in W$
and $\eta\in\End_{W}(\T_{\ff}^1)$ (recall $\T_{\ff}^1$ in \eqref{ZformT}). Hence the endomorphism $\eta_{\mk,\ml}^1$ of $\T_{\ff}^1$, for $(\mk,\ml)\in X_{\ff}^{(2)}$, defined by
$$c_{\mi,\mj}(\eta_{\mk,\ml}^1)=\left\{
\begin{array}{ll}
1,& (\mi,\mj)\sim (\mk,\ml);\\
0,& \mbox{otherwise},
\end{array}\right.$$
does lie in $\End_{W}(\T_{\ff}^1)$.
Therefore we obtain
$$0=\sum_{(\mi,\mj)\in X_{\ff}^{(2)}}\alpha_{\mi,\mj}c_{\mi,\mj}(\eta_{\mk,\ml}^1)=\alpha_{\mk,\ml}, \quad \forall(\mk,\ml)\in X_{\ff}^{(2)},$$
which is a contradiction to \eqref{indep}.
Thus  $\{c_{\mi,\mj}~|~ (\mi,\mj)\in X_{\ff}^{(2)}\}$ are independent.

The proposition is proved.
\end{proof}

\subsection{Another view of the $q$-Schur algebra}

In this subsection, we shall identify $\T_{\ff}$ as a sum of permutation modules of $\HH$, and then study the $q$-Schur algebra $\Sc_{q}$ accordingly. This will be helpful for the double centralizer property and the construction of canonical basis on $\Sc_{q}$ in the next subsections.

Note in each $W$-orbit (called a {\em linkage class}) in $X$ there exists a unique anti-dominant element in $X^-$; cf. \eqref{eq:anti}.
Denote
\begin{align}
  \label{eq:Lambda}
\Lambda &=\{\mbox{linkage classes in $X$}\},
\\
\Lambda_{\ff} &=\{\mbox{linkage classes in $X_{\ff}$}\},
\label{Ln}
\\
\bu_\gamma & =\mbox{ the unique anti-dominant element in a linkage class } \gamma \in \Lambda.
\label{eq:ibu}
\end{align}
In other words, there is a bijection
\begin{equation}
  \label{eq:XnLn}
\Lambda_{\ff} \leftrightarrow X_{\ff}^-,
\qquad \gamma \mapsto \bu_\gamma.
\end{equation}

For any subset $J \subseteq \{1,2,\ldots,d\}$, let $W_J$ be the parabolic subgroup of $W$ generated by $\{ s_j \mid j \in J \}$.
Let $\HH_J$ be the subalgebra of $\HH$ generated by $\{ H_j \mid j \in J \}$,
and $\D_J$ be the set of minimal length right coset representatives for $W_J \setminus W$.

For any $\gamma\in\Lambda_{\ff}$, we define the subset
\begin{equation}\label{eq:PP}
J_\gamma=\{k~|~ 1\leq k\leq d, \bu_\gamma s_k=\bu_\gamma\}.
\end{equation}
We shall write $W_\gamma =W_{J_\gamma}$ and $\D_\gamma=\D_{J_\gamma}$.
We denote by $w_\circ^J$ and $w_\circ^\gamma$ the unique longest element in $W_J$ and $W_\gamma$, respectively.
The subspace of $\T_{\ff}$
\[
\T_\gamma :=\bigoplus_{\mi\in\gamma}\A v_{\mi}
\]
is clearly a right $\HH$-module, and thus we have the following $\HH$-module decomposition
\[
\T_{\ff}=\bigoplus_{\gamma\in\Lambda_{\ff}}\T_\gamma.
\]

For $J\subseteq \{1,2,\ldots,d\}$ and $\gamma \in \Lambda_{\ff}$, we define the $q$-symmetrizers $x_J, x_\gamma \in\HH$ by
\begin{align*}
x_J & =\sum_{w\in W_J} q^{\ell(w_\circ^J)-\ell(w)}H_w,
\\
x_\gamma &=x_{J_\gamma}.
\end{align*}
(Our convention for $x_J$ here differs from some literature by a factor $q^{\ell(w_\circ^J)}$.)

\begin{lem}
  \label{lem:iden}
For each $\gamma \in \Lambda_{\ff}$, there exists a right $\HH$-module isomorphism
\[
\Omega_\gamma: \T_\gamma \stackrel{\cong}{\longrightarrow} x_\gamma\HH,
\qquad v_{\bu_\gamma} \mapsto x_\gamma.
\]
Moreover, $\Omega_\gamma( v_{\bu_\gamma w} ) =x_\gamma H_w$, for $w \in \D_\gamma$.
This induces an identification
\[
\Omega: \T_{\ff} \cong \bigoplus_{\gamma\in\Lambda_{\ff}}x_\gamma\HH.
\]
\end{lem}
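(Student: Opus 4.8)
The plan is to construct the isomorphism $\Omega_\gamma$ directly on standard basis elements and check it is a well-defined $\HH$-module map, then assemble the $\Omega_\gamma$ into $\Omega$ using the decomposition $\T_\ff = \bigoplus_\gamma \T_\gamma$ already noted. The key structural input is that the stabilizer of the antidominant weight $\bu_\gamma$ under the dot action (equivalently, under the $W$-action on $\XX_\ff$) is exactly the parabolic subgroup $W_\gamma = W_{J_\gamma}$, which is immediate from the definition \eqref{eq:PP} of $J_\gamma$ together with the standard fact that a point stabilizer in $W$ acting on a weight lattice is generated by the simple reflections it contains (this uses antidominance of $\bu_\gamma$). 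Consequently the map $W_\gamma \backslash W \to \gamma$, $W_\gamma w \mapsto \bu_\gamma \cdot w = w^{-1}(\bu_\gamma)$, is a bijection, and restricting to minimal-length coset representatives ${}^{J_\gamma}W$ gives a bijection ${}^{J_\gamma}W \xrightarrow{\sim} \gamma$.

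Next I would define $\Omega_\gamma$ on the standard basis by $\Omega_\gamma(v_{\bu_\gamma \cdot w}) = x_\gamma H_w$ for $w \in {}^{J_\gamma}W$, and extend $\A$-linearly. This is a bijection of $\A$-bases: on the target side, $\{x_\gamma H_w : w \in {}^{J_\gamma}W\}$ is a well-known $\A$-basis of the right ideal $x_\gamma \HH$ (it is the ``$q$-permutation module'' attached to $W_\gamma$), because $x_\gamma H_v = q^{\ell(w_\circ^\gamma) - \ell(uw)} \cdots$ collapses according to the coset of $v$, more precisely $x_\gamma H_{uw} = q^{\ell(w_\circ^\gamma)-\ell(u)} x_\gamma H_w$ for $u \in W_\gamma$, $w \in {}^{J_\gamma}W$ with $\ell(uw)=\ell(u)+\ell(w)$, which one verifies from the quadratic relation $H_j x_\gamma = q^{-1} x_\gamma$ for $j \in J_\gamma$ (the defining property of the $q$-symmetrizer under our normalization). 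So $\Omega_\gamma$ is an $\A$-module isomorphism; the content is that it intertwines the right $\HH$-actions.

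For $\HH$-linearity it suffices to check compatibility with each generator $H_k$. On the source, $v_{\bu_\gamma \cdot w} H_k$ is given by the three cases of \eqref{Haction} according to whether $(\bu_\gamma \cdot w) s_k$ equals, is $\succ$, or is $\prec$ to $\bu_\gamma \cdot w$; in coset language these correspond exactly to the three cases for multiplying a minimal coset representative $w \in {}^{J_\gamma}W$ by $s_k$ on the right: $ws_k \in W_\gamma w$ (i.e. $w s_k = w'$ for another representative... no: the case $ws_k$ again minimal and longer, the case $ws_k$ minimal and shorter, and the case $ws_k \notin {}^{J_\gamma}W$, i.e. $ws_k = uw$ for some reflection $u \in W_\gamma$). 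These are matched against the multiplication rules for $x_\gamma H_w \cdot H_k$ in $\HH$, using $H_w H_k = H_{wk}$ when $\ell(wk)>\ell(w)$, $H_w H_k = H_{wk} + (q^{-1}-q) H_w$ when $\ell(wk)<\ell(w)$, and in the third case $x_\gamma H_w H_k = x_\gamma H_{uw} = q^{\ell(w_\circ^\gamma)-\ell(u)} x_\gamma H_{w}\cdot(\text{something})$ reducing via $u$; since $u$ has length $1$ here one gets $x_\gamma H_w H_k = q^{-1} x_\gamma H_w$, matching the first case of \eqref{Haction}. Translating ``$\succ$/$\prec$'' into ``longer/shorter coset representative'' is the one point needing care: this is the standard compatibility between the Bruhat-type order on $\gamma \cong W_\gamma\backslash W$ and the partial order $\preceq$ on weights, which holds because $\bu_\gamma$ is antidominant (so $\bu_\gamma \cdot w \succ \bu_\gamma \cdot ws_k$ iff $w \to ws_k$ goes ``up'' in the coset poset). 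Once $\Omega_\gamma$ is established for each $\gamma$, the direct sum $\Omega = \bigoplus_\gamma \Omega_\gamma : \T_\ff = \bigoplus_\gamma \T_\gamma \xrightarrow{\sim} \bigoplus_\gamma x_\gamma \HH$ is automatically an $\HH$-module isomorphism.

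The main obstacle is the bookkeeping in the third case --- verifying that when $w \in {}^{J_\gamma}W$ but $ws_k \notin {}^{J_\gamma}W$, the relation $v_{\bu_\gamma \cdot w} H_k = q^{-1} v_{\bu_\gamma \cdot w}$ on the source corresponds correctly to $x_\gamma H_w H_k = q^{-1} x_\gamma H_w$ on the target; this requires knowing $ws_k = s_i w$ for the (unique) simple reflection $s_i \in W_\gamma$ obtained by transporting, together with $\ell(ws_k) = \ell(w) - 1$ is impossible so in fact $\ell(ws_k) = \ell(w)+1$ with $ws_k = s_i w$, hence $H_w H_k = H_{s_i} H_w$ and $x_\gamma H_{s_i} = q^{-1} x_\gamma$. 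Establishing this clean ``$ws_k = s_i w$'' statement (a standard lemma on minimal coset representatives for parabolic subgroups) and matching it to the ``$(\bu_\gamma \cdot w)s_k = \bu_\gamma \cdot w$'' condition from \eqref{eq:PP} applied after transport by $w$ is where essentially all the work lies; everything else is routine.
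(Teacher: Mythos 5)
Your proposal is correct and follows essentially the same route as the paper, which likewise defines $\Omega_\gamma(v_{\bu_\gamma\cdot w})=x_\gamma H_w$ on the standard basis and verifies compatibility with each generator $H_k$ via the standard formulas for $H_wH_k$ together with the minimal-coset-representative dichotomy (your three cases matching \eqref{Haction}). The only blemish is the collapse formula, which should read $x_\gamma H_{uw}=q^{-\ell(u)}x_\gamma H_w$ for $u\in W_\gamma$ with $\ell(uw)=\ell(u)+\ell(w)$ (no factor $q^{\ell(w_\circ^\gamma)}$); this does not affect the argument, since only the fact that the factor is a power of $q$ is used.
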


\begin{proof}
Letting $\Omega_\gamma( v_{\bu_\gamma  w} ) =x_\gamma H_w$, for $w \in \D_\gamma$, clearly defines an $\A$-module isomorphism $\Omega_\gamma: \T_\gamma \stackrel{\cong}{\longrightarrow} x_\gamma\HH$. One then checks that this map commutes with the actions of $H_k$ ($1\leq k\leq d$) given by \eqref{Haction}, using the well known formulas for $H_w H_i$. Therefore, $\Omega_\gamma$ is a right $\HH$-module isomorphism. Clearly the $\HH$-module homomorphism $\Omega_\gamma$ is determined by the formula $\Omega_\gamma( v_{\bu_\gamma}) =x_\gamma$.
\end{proof}

Thanks to Lemma~\ref{lem:iden} above, we can identify
\begin{equation*}
\Sc_{q}=\End_{\HH}(\oplus_{\gamma\in\Lambda_{\ff}}x_\gamma\HH)=\bigoplus_{\gamma,\nu\in\Lambda_{\ff}}\Hom_{\HH}(x_\nu\HH,x_\gamma\HH).
\end{equation*}

Explicitly, the set $\D_\gamma$ of distinguished minimal length right coset representatives of $W_\gamma$ in $W$ is
\[
\D_\gamma=\{g\in W~|~ \ell(wg)=\ell(w)+\ell(g), \forall w\in W_\gamma\}.
\]
It is known that $\D_{\gamma}^{-1}$ is the set of distinguished minimal length left coset representatives of $W_\gamma$ in $W$.
Denote by
\[
\D_{\gamma\nu}=\D_{\gamma}\cap\D_\nu^{-1}
\]
the set of minimal length double coset representatives.
For $\gamma,\nu\in\Lambda_{\ff}$ and $g\in\D_{\gamma\nu}$, recalling the notation \eqref{HY}, we denote by
\begin{align}  \label{eq:phig}
\begin{split}
\phi_{\gamma\nu}^g\in \Sc_{q} &=\End_{\HH}(\oplus_{\gamma\in\Lambda_{\ff}}x_\gamma\HH),
\\
x_{\nu'}  & \mapsto  \delta_{\nu,\nu'} q^{\ell(w_\circ^\nu)}H_{W_\gamma gW_\nu}, \quad \forall \nu' \in \Lambda_{\ff}.
\end{split}
\end{align}
Let
\begin{equation}   \label{eq:Xi}
\Xi=\{(\gamma,g,\nu)~|~\gamma,\nu\in\Lambda_{\ff}, g\in\D_{\gamma\nu}\}.
\end{equation}

\begin{lem}\label{bijection}
There is a bijective map $\Xi \longrightarrow X_{\ff}^{(2)}, \; (\gamma,g,\nu)\mapsto (\bu_\gamma g,\bu_\nu).$
\end{lem}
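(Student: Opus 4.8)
The plan is to exhibit an explicit inverse and check that both composites are the identity, so the core of the argument is a bijection between the double-coset data $(\gamma,g,\nu)$ and the pairs in $\XX_\ff^{(2)}$.

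First I would define the candidate map $\Phi:\Xi\to\XX_\ff\times\XX_\ff$ by $\Phi(\gamma,g,\nu)=(\bu_\gamma\cdot g,\bu_\nu)$ and verify that its image actually lands in $\XX_\ff^{(2)}$. The second coordinate $\bu_\nu$ is anti-dominant by the definition \eqref{eq:ibu} of $\bu_\nu$, so condition (i) holds. For condition (ii), I would use that $g\in\D_{\gamma\nu}\subseteq\D_\nu^{-1}$, i.e. $g$ is a minimal-length representative on the $W_\nu$-side: if $s_k$ fixes $\bu_\nu$, then $k\in J_\nu$ and $g^{-1}s_k$ has length $\ell(g^{-1})+1$, which translates (via the usual length/Bruhat comparison between $\mi$ and $\mi s_k$ that underlies \eqref{Haction}) into $\bu_\gamma\cdot g\cdot s_k\succ\bu_\gamma\cdot g$. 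Thus $\Phi$ does map into $\XX_\ff^{(2)}$.

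Next I would construct the inverse. Given $(\mi,\mj)\in\XX_\ff^{(2)}$: the linkage class $\nu$ of $\mj$ is determined, and since $\mj\in\XX_\ff^-$ we have $\mj=\bu_\nu$; likewise the linkage class $\gamma$ of $\mi$ is determined. Now $\mi$ lies in the orbit $\bu_\gamma\cdot W$, so there is some $w\in W$ with $\mi=\bu_\gamma\cdot w$; the stabilizer of $\bu_\gamma$ under the dot action is exactly $W_\gamma$, so the coset $W_\gamma w$ is well-defined, and I take $g$ to be its minimal-length representative, i.e. $g\in\D_\gamma$ with $\mi=\bu_\gamma\cdot g$. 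The point still to be checked is that condition (ii) forces $g\in\D_\nu^{-1}$ as well, so that $g\in\D_{\gamma\nu}$: if $k\in J_\nu$ then (ii) gives $\bu_\gamma\cdot g\cdot s_k\succ\bu_\gamma\cdot g$, which by the same length dictionary means $\ell(gs_k)=\ell(g)+1$, and since $J_\nu$ generates $W_\nu$ this is precisely the condition defining $\D_\nu^{-1}$. This produces a well-defined $(\gamma,g,\nu)\in\Xi$, and the two constructions are visibly mutually inverse: $\gamma,\nu$ are read off as linkage classes, and $g$ is uniquely pinned down as the minimal double-coset representative with $\bu_\gamma\cdot g=\mi$.

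The main obstacle I anticipate is the translation dictionary between the combinatorics of $\XX_\ff$ (the partial order $\preceq$, anti-dominance, the stabilizer being a parabolic $W_\gamma$) and the Coxeter-theoretic statements about $\D_\gamma$, $\D_\nu^{-1}$, and lengths — in particular proving cleanly the equivalence ``$\bu_\gamma\cdot g\cdot s_k\succ\bu_\gamma\cdot g \iff \ell(gs_k)>\ell(g)$'' for $k$ in the relevant index set, and the fact that the dot-action stabilizer of an anti-dominant weight is generated by the simple reflections it fixes. Both are standard (the latter is the usual statement that stabilizers of points in a Weyl chamber closure are parabolic), so once these are recorded the bijection itself is a bookkeeping matter; I would state them as the key lemmas and then assemble the proof in a few lines.
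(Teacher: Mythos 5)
Your proposal follows the same route as the paper's proof: read off $\gamma$ and $\nu$ as the linkage classes of $\mi$ and $\mj$, recover $g$ as the minimal-length representative of the coset $W_\gamma g$ determined by $\mi=\bu_\gamma\cdot g$ (using that the stabilizer of the anti-dominant weight $\bu_\gamma$ is the parabolic subgroup $W_\gamma$), and translate condition (ii) into minimality of $g$ on the $W_\nu$-side. The paper packages this as injectivity plus surjectivity rather than as an explicit two-sided inverse, and it does not spell out the well-definedness check or the length/order dictionary, but the content is the same.

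There is, however, one step in your argument that fails as stated, precisely in the well-definedness check. Your claimed dictionary ``$\bu_\gamma\cdot g\cdot s_k\succ\bu_\gamma\cdot g\iff\ell(gs_k)>\ell(g)$'' is not an equivalence: it can happen that $gs_k\in W_\gamma g$, in which case $\ell(gs_k)=\ell(g)+1$ but $\bu_\gamma\cdot gs_k=\bu_\gamma\cdot g$, so the strict relation $\succ$ does not hold. This case genuinely occurs, e.g.\ for $g=\id$, $\gamma=\nu$ with $J_\gamma\neq\emptyset$: the triple $(\gamma,\id,\gamma)$ maps to $(\bu_\gamma,\bu_\gamma)$, which violates condition (ii) read with strict $\succ$. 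The defect is really an imprecision in the paper's formulation of (ii): for the lemma (and for the spanning argument in Proposition~\ref{schurbasis}) to be correct, (ii) must be read as ``$\mi s_k\not\prec\mi$''. With that reading your argument goes through once you record the correct three-case dictionary for $g$ minimal in $W_\gamma g$: $\mi s_k=\mi$ iff $gs_k\in W_\gamma g$ (and then automatically $\ell(gs_k)=\ell(g)+1$, since $gs_k=yg$ forces $\ell(y)=1$), $\mi s_k\succ\mi$ iff $\ell(gs_k)>\ell(g)$ and $gs_k\notin W_\gamma g$, and $\mi s_k\prec\mi$ iff $\ell(gs_k)<\ell(g)$; in particular, in your inverse construction, condition (ii) in the non-strict form still yields $\ell(gs_k)>\ell(g)$ for all $k\in J_\nu$, hence $g$ is minimal in $gW_\nu$, as needed. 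Finally, be consistent about sides: what controls $\mi s_k$ is $\ell(gs_k)$, not $\ell(s_kg)=\ell(g^{-1}s_k)$ as in your appeal to $g\in\D_\nu^{-1}$ (the paper's own definitions of $\D_\gamma$ and $\D_\nu^{-1}$ are also left/right ambiguous, so fix one convention and use it throughout).
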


\begin{proof}
The anti-dominant weights $\bu_\gamma$ and $\bu_\nu$ are determined by $\gamma$ and $\nu$, respectively.
If $\bu_\gamma g=\bu_\gamma h$ $(g,h\in\D_{\gamma\nu})$, then $g\in W_\gamma h$ and hence $g=h$.
So it is injective.

For any $(\mi,\mj)\in X_{\ff}^{(2)}$, we can read off the linkage class $\gamma$ (resp. $\nu$) of $\mi$ (resp. $\mj$) immediately. Let $g\in\D_{\gamma}$ be of minimal length such that $\mi=\bu_\gamma g$.
Condition (ii) in the definition of $X_{\ff}^{(2)}$ forces that $g\in\D_\nu^{-1}$. Thus it is surjective.
\end{proof}

\begin{prop}  \label{prop:basis}
We have the following identification of endomorphisms on $\T_{\ff}=\oplus_{\gamma\in\Lambda_{\ff}}x_\gamma\HH$:
\begin{equation}\label{eq:phieta}
\phi_{\gamma\nu}^g= q^{\ell(w_\circ^\nu)-\ell(w_\circ^\gamma)-\ell(g)} \eta_{\bu_\gamma g,\bu_\nu}.
\end{equation}
Hence $\{\phi_{\gamma\nu}^g~|~\gamma,\nu\in\Lambda_{\ff}, g\in\D_{\gamma\nu}\}$ is an $\A$-basis of $\Sc_{q}$.
\end{prop}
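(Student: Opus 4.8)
The plan is to compute the image $\phi_{\gamma\nu}^g(x_\nu)$ in the standard basis $\{v_\mi\}$ of $\T_\gamma$ under the inverse of the identification $\Omega$ from Lemma~\ref{lem:iden}, and to match it up with a scalar multiple of $\eta_{\bu_\gamma\cdot g,\bu_\nu}$. Concretely, by definition $\phi_{\gamma\nu}^g$ sends $x_\nu \mapsto q^{\ell(w_\circ^\nu)} H_{W_\gamma g W_\nu}$ and kills $x_{\nu'}$ for $\nu'\neq\nu$; so $\phi_{\gamma\nu}^g = \eta_{\bu_\gamma\cdot g,\bu_\nu}$ up to a scalar that we must pin down, since both are determined (by Proposition~\ref{schurbasis} and Lemma~\ref{bijection}, which identifies $(\gamma,g,\nu)$ with $(\bu_\gamma\cdot g,\bu_\nu)\in\XX_\ff^{(2)}$) by a single coordinate function $c_{\mk,\ml}$ with $(\mk,\ml)\in\XX_\ff^{(2)}$. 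The task is therefore reduced to extracting the coefficient of $v_{\bu_\gamma\cdot g}$ (equivalently, under $\Omega_\gamma$, of $x_\gamma H_g$) in $\Omega_\gamma^{-1}\big(q^{\ell(w_\circ^\nu)}H_{W_\gamma g W_\nu}\big)$.

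First I would rewrite the double-coset sum. Using the standard decomposition of a double coset, each $w\in W_\gamma g W_\nu$ factors uniquely as $w = a\, g\, b$ with $a\in W_\gamma$, $b\in (W_\nu\cap g^{-1}W_\gamma g)\backslash W_\nu$ chosen of minimal length in its coset, and $\ell(w)=\ell(a)+\ell(g)+\ell(b)$ (here one uses $g\in\D_{\gamma\nu}$). Hence, recalling $H_Y=\sum_{w\in Y}q^{-\ell(w)}H_w$,
\[
q^{\ell(w_\circ^\nu)}H_{W_\gamma gW_\nu}
= q^{\ell(w_\circ^\nu)}\sum_{a}\sum_{b} q^{-\ell(a)-\ell(g)-\ell(b)} H_a H_g H_b .
\]
Now $\sum_a q^{-\ell(a)}H_a$ is, up to the normalizing power $q^{-\ell(w_\circ^\gamma)}$, the $q$-symmetrizer $x_\gamma$; and $x_\gamma H_a = q^{-\ell(a)+\ell(w_\circ^\gamma)}x_\gamma$ for $a\in W_\gamma$, which will let us absorb the $a$-sum. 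After this bookkeeping the image lies in $x_\gamma\HH$ as it must, and one reads off that the coefficient of $x_\gamma H_g$ (i.e.\ the ``$v_{\bu_\gamma\cdot g}$-coordinate'') equals $q^{\ell(w_\circ^\nu)-\ell(w_\circ^\gamma)-\ell(g)}$, provided $g$ itself is the minimal length representative picked out for the trivial $b$-coset — which it is. Meanwhile $\eta_{\bu_\gamma\cdot g,\bu_\nu}$ has that same coordinate equal to $1$ by \eqref{def:eta}. Comparing gives \eqref{eq:phieta}.

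For the last sentence: $\Xi\to\XX_\ff^{(2)}$ is a bijection by Lemma~\ref{bijection}, and $\{\eta_{\mi,\mj}\mid(\mi,\mj)\in\XX_\ff^{(2)}\}$ is an $\A$-basis of $\Sc_q$ by Proposition~\ref{schurbasis}; since \eqref{eq:phieta} expresses each $\phi_{\gamma\nu}^g$ as a unit multiple (a power of $q$) of the corresponding basis element $\eta_{\bu_\gamma\cdot g,\bu_\nu}$, the set $\{\phi_{\gamma\nu}^g\}$ is again an $\A$-basis.

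**Main obstacle.** The only genuinely delicate point is the combinatorics of the double coset $W_\gamma g W_\nu$: one must be careful that the factorization $w=agb$ is unique with additive lengths, that $g\in\D_{\gamma\nu}$ is the correct (minimal) representative, and that the $a$-sum collapses cleanly against $x_\gamma$ so that exactly one power of $q$ survives in front of $x_\gamma H_g$. I expect the $b$-sum over $(W_\nu\cap g^{-1}W_\gamma g)\backslash W_\nu$ to contribute only to coordinates $v_{\bu_\gamma\cdot gb}$ with $b\neq$ identity-coset representative, hence not to interfere with the single coordinate we are tracking; verifying this non-interference (equivalently, that $gb\notin W_\gamma g$ for nontrivial $b$) is where one must invoke the standard double-coset length facts carefully rather than by hand.
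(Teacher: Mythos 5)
Your proposal is correct and its computational core coincides with the paper's: both expand $q^{\ell(w_\circ^\nu)}H_{W_\gamma gW_\nu}$ along the decomposition $W_\gamma gW_\nu=\bigsqcup_b W_\gamma g b$ with additive lengths, absorb the $W_\gamma$-sum into $x_\gamma$ via $\sum_{a\in W_\gamma}q^{-\ell(a)}H_a=q^{-\ell(w_\circ^\gamma)}x_\gamma$, and read off the coefficient $q^{\ell(w_\circ^\nu)-\ell(w_\circ^\gamma)-\ell(g)}$ on $x_\gamma H_g=v_{\bu_\gamma\cdot g}$. Where you differ is in how the comparison with $\eta_{\bu_\gamma\cdot g,\bu_\nu}$ is organized: the paper computes \emph{all} coefficients of $\eta_{\bu_\gamma\cdot g,\bu_\nu}(v_{\bu_\nu})$ from the relations \eqref{eq:cij} (obtaining $q^{-\ell(y)}$ on $v_{\bu_\gamma\cdot gy}$), matches the two expansions term by term, and then extends by $\HH$-linearity using that $\T_\ff$ is generated over $\HH$ by the anti-dominant $v_\mi$; you instead use the dual-basis expansion coming from Proposition~\ref{schurbasis} and \eqref{def:eta} to reduce everything to a single transversal coordinate. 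That is a legitimate, and arguably leaner, route, but note that the step you flag as the main obstacle is not quite the load-bearing one. That $gb\notin W_\gamma g$ for $b\neq e$ only guarantees the extracted coefficient is uncontaminated; what makes the single-coordinate reduction valid is that no pair $(\bu_\gamma\cdot gb,\bu_\nu)$ with $b\neq e$ lies in $\XX_\ff^{(2)}$, equivalently that $g$ is the \emph{unique} element of $\D_{\gamma\nu}$ in its double coset (this is exactly the mechanism in Lemma~\ref{bijection}, where condition (ii) of $\XX_\ff^{(2)}$ forces membership in $\D_\nu^{-1}$); without that, $\phi_{\gamma\nu}^g$ could have other nonzero transversal coordinates and one coefficient would not pin it down. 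Two small further points: the additivity $\ell(agb)=\ell(a)+\ell(g)+\ell(b)$ also gives $gb\in\D_\gamma$, which you need in order to identify $x_\gamma H_{gb}$ with the standard basis vector $v_{\bu_\gamma\cdot gb}$ under Lemma~\ref{lem:iden}; and your side formula $x_\gamma H_a=q^{-\ell(a)+\ell(w_\circ^\gamma)}x_\gamma$ should read $x_\gamma H_a=q^{-\ell(a)}x_\gamma$, though this formula is not actually needed once the $a$-sum has been absorbed. With these points made explicit, your argument is complete, and the final basis statement via Lemma~\ref{bijection} and Proposition~\ref{schurbasis} is exactly as in the paper.
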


\begin{proof}
Note the identification $x_\gamma=v_{\bu_\gamma}$ given in Lemma~\ref{lem:iden}. For any $(\gamma,g,\nu)\in\Xi$, we define the subset $\mathcal{C}(\gamma,g,\nu)$ of $W$ to be the set of minimal length right coset representatives for $W_\gamma$ in $W$ which lie in the double coset $W_\gamma gW_\nu$. It follows from the definition that $\mathcal{C}(\gamma,g,\nu)\subset g W_\nu.$ Thus we can write an element $w\in\mathcal{C}(\gamma,g,\nu)$ in the form $w=gy$ with $y\in W_\nu$.
We compute that
\begin{align}\label{actionphi}
\phi_{\gamma\nu}^g(v_{\bu_\nu})&=q^{\ell(w_\circ^\nu)}H_{ W_\gamma g W_\nu}=q^{\ell(w_\circ^\nu)-\ell(w_\circ^\gamma)}\sum_{w\in \mathcal{C}(\gamma,g,\nu)}q^{-\ell(w)}v_{\bu_{\gamma}}H_w
\\&=q^{\ell(w_\circ^\nu)-\ell(w_\circ^\gamma)}\sum_{w\in \mathcal{C}(\gamma,g,\nu)}q^{-\ell(w)}v_{\bu_{\gamma} w}. \nonumber
\end{align}
On the other hand, we have
\begin{equation}\label{actioneta}
\eta_{\bu_\gamma g,\bu_\nu}(v_{\bu_\nu})=\sum_{gy\in\mathcal{C}(\gamma,g,\nu)}c_{\bu_\gamma gy,\bu_\nu}(\eta_{\bu_\gamma g,\bu_\nu})v_{\bu_\gamma gy}
=\sum_{gy\in\mathcal{C}(\gamma,g,\nu)}q^{-\ell(y)}v_{\bu_\gamma gy},
\end{equation}
where the first equality uses  $\bu_\nu y=\bu_{\nu}$ and the second equality uses \eqref{eq:cij}.

Comparing \eqref{actionphi} and \eqref{actioneta} gives us
\[
\phi_{\gamma\nu}^g(v_\mi)=q^{\ell(w_\circ^\nu)-\ell(w_\circ^\gamma)-\ell(g)}\eta_{\bu_\gamma g,\bu_\nu}(v_{\mi}),\quad \forall i\in X_{\ff}^-.\]
By the commuting action of $\HH$, this implies that
\begin{align}\label{eq:phieta1}
\phi_{\gamma\nu}^g(v_\mi H)
&=q^{\ell(w_\circ^\nu)-\ell(w_\circ^\gamma)-\ell(g)}\eta_{\bu_\gamma g,\bu_\nu}(v_{\mi}H),\quad \forall \mi\in X_{\ff}^-, H\in\HH.
\end{align}
Since $\T_{\ff}$ is generated as an $\HH$-module by $\{v_{\mi}~|~\mi\in X_{\ff}^-\}$, the equality \eqref{eq:phieta} follows from \eqref{eq:phieta1}.

Finally, the second statement follows by \eqref{eq:phieta}, Lemma \ref{bijection} and Proposition \ref{schurbasis}.
\end{proof}

\subsection{A $q$-Schur duality}

We first prepare some lemmas. Write $\mi\sim\mj$ if $\mi$ and $\mj$ are in the same linkage class.

\begin{lem}
If $\mi\in X_{\ff}^-$, then $\eta_{\mi,\mi}\in \Sc_{q}$ is an idempotent such that
\begin{equation*}
\eta_{\mi,\mi} v_{\mj}=\left\{\begin{array}{ll}
v_{\mj} & \mbox{if $\mj\sim\mi$},\\
0 & \mbox{otherwise}.
\end{array}\right.
\end{equation*}
\end{lem}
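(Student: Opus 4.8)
The plan is to unwind the definition of $\eta_{\mi,\mi}$ for $\mi \in \XX_\ff^-$ and read off its action directly from the coordinate functions. Since $\mi$ is antidominant and, being diagonal, the pair $(\mi,\mi)$ automatically satisfies condition (ii) in the definition of $\XX_\ff^{(2)}$ (indeed if $\mi s_k = \mi$ then trivially $\mi s_k \succ \mi$ is vacuous—wait, more precisely condition (ii) concerns the first coordinate, so for the pair $(\mi,\mi)$ with $\mi$ antidominant one checks $(\mi,\mi)\in \XX_\ff^{(2)}$ needs care), I would first confirm that $(\mi,\mi) \in \XX_\ff^{(2)}$; this is where condition (ii) must be checked: if $\mi s_k = \mi$ then we need $\mi s_k \succ \mi$, which fails. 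So in fact $(\mi,\mi)$ need not lie in $\XX_\ff^{(2)}$ unless $\mi$ is \emph{regular} antidominant. The cleaner route is therefore to use Proposition~\ref{prop:basis}: identify $\eta_{\mi,\mi}$ with a rescaling of $\phi_{\gamma\gamma}^{\id}$ where $\gamma$ is the linkage class of $\mi$, since $\mi = \bu_\gamma$, $g = \id \in \D_{\gamma\gamma}$, and $\ell(w_\circ^\nu) - \ell(w_\circ^\gamma) - \ell(g) = 0$ when $\nu = \gamma$ and $g = \id$.

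Concretely, I would argue as follows. Let $\gamma \in \Lambda_\ff$ be the linkage class containing $\mi$, so $\mi = \bu_\gamma$. By Proposition~\ref{prop:basis}, $\eta_{\mi,\mi} = \eta_{\bu_\gamma \cdot \id, \bu_\gamma} = q^{0}\phi_{\gamma\gamma}^{\id} = \phi_{\gamma\gamma}^{\id}$. Under the identification $\Omega$ of Lemma~\ref{lem:iden}, $\phi_{\gamma\gamma}^{\id}$ sends $x_{\nu'} \mapsto \delta_{\gamma,\nu'} q^{\ell(w_\circ^\gamma)} H_{W_\gamma \id W_\gamma} = \delta_{\gamma,\nu'} q^{\ell(w_\circ^\gamma)} H_{W_\gamma}$. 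Now $q^{\ell(w_\circ^\gamma)} H_{W_\gamma} = \sum_{w \in W_\gamma} q^{\ell(w_\circ^\gamma) - \ell(w)} H_w = x_\gamma$, so $\phi_{\gamma\gamma}^{\id}$ acts as the identity on the summand $x_\gamma \HH \cong \T_\gamma$ and as zero on all other summands $x_\nu\HH \cong \T_\nu$ with $\nu \neq \gamma$. Translating back via $\Omega^{-1}$: for $\mj \sim \mi$ we have $\mj \in \gamma$, and $v_\mj \in \T_\gamma$, so $\eta_{\mi,\mi} v_\mj = v_\mj$; for $\mj \not\sim \mi$, $v_\mj$ lies in some $\T_\nu$ with $\nu \neq \gamma$, so $\eta_{\mi,\mi} v_\mj = 0$. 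The idempotency $\eta_{\mi,\mi}^2 = \eta_{\mi,\mi}$ is then immediate from this description: applying $\eta_{\mi,\mi}$ to $v_\mj$ gives either $v_\mj$ (when $\mj \sim \mi$) or $0$, and in either case applying $\eta_{\mi,\mi}$ again returns the same element.

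Alternatively, and perhaps more in keeping with a direct proof, one can verify the action formula straight from \eqref{eq:cij} without invoking Proposition~\ref{prop:basis}: one extends the definition \eqref{def:eta} of $\eta_{\mi,\mi}$ (valid a priori only when $(\mi,\mi) \in \XX_\ff^{(2)}$) by the consistency relations \eqref{eq:cij}, and checks that the resulting coordinate functions $c_{\mk,\ml}(\eta_{\mi,\mi})$ equal $1$ when $\mk = \ml$ and $\mk \sim \mi$ with $\mk$ chosen to be the minimal-length translate, propagating via \eqref{eq:cij} to give $v_\mj$ for all $\mj \sim \mi$. I expect the main (minor) obstacle to be bookkeeping: making sure the normalization $q^{\ell(w_\circ^\nu) - \ell(w_\circ^\gamma) - \ell(g)}$ in \eqref{eq:phieta} genuinely collapses to $1$ in the diagonal case $\gamma = \nu$, $g = \id$, and that the element $x_\gamma$ is correctly identified with $q^{\ell(w_\circ^\gamma)}H_{W_\gamma}$ so that $\phi_{\gamma\gamma}^{\id}$ really acts as the identity (not a scalar multiple thereof) on $\T_\gamma$. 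Once that normalization is pinned down, the statement—including idempotency—reads off immediately. I would therefore present the Proposition~\ref{prop:basis} route as the clean proof, with the normalization check as the one point deserving a sentence of justification.
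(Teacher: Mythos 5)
Your argument is correct, but it proves the lemma by a different route than the paper. You identify $\eta_{\mi,\mi}=\eta_{\bu_\gamma,\bu_\gamma}$ with $\phi^{\id}_{\gamma\gamma}$ via Proposition~\ref{prop:basis} (the normalization $q^{\ell(w_\circ^\nu)-\ell(w_\circ^\gamma)-\ell(g)}$ indeed collapses to $1$ when $\nu=\gamma$, $g=\id$), note from \eqref{eq:phig} that $\phi^{\id}_{\gamma\gamma}(x_{\nu'})=\delta_{\gamma,\nu'}q^{\ell(w_\circ^\gamma)}H_{W_\gamma}=\delta_{\gamma,\nu'}x_\gamma$, and conclude via Lemma~\ref{lem:iden} that $\eta_{\mi,\mi}$ is the projection onto the summand $x_\gamma\HH\cong\T_\gamma$, whence the action formula and idempotency. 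The paper instead argues directly with coordinate functions: it defines $\eta$ by $c_{\mk,\ml}(\eta)=1$ if $\mk=\ml\sim\mi$ and $0$ otherwise, checks that these coordinates satisfy \eqref{eq:cij} so that $\eta\in\Sc_q$, and then identifies $\eta=\eta_{\mi,\mi}$ by comparing values of the $c_{\mk,\ml}$ with $(\mk,\ml)\in\XX_\ff^{(2)}$, using Proposition~\ref{schurbasis}. Your route is shorter given that Proposition~\ref{prop:basis} is already available at this point of the paper (and there is no circularity); the paper's route is more elementary, needing only \eqref{eq:cij} and the basis statement, and makes the ``projection onto the linkage class'' description manifest from the outset. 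One correction to your preliminary discussion: your claim that $(\mi,\mi)$ lies in $\XX_\ff^{(2)}$ only for regular antidominant $\mi$ rests on reading condition (ii) with strict $\succ$; under that reading Lemma~\ref{bijection} would fail (the orbit of $(\bu_\gamma,\bu_\gamma)$ would have no representative in the transversal) and $\eta_{\bu_\gamma\cdot g,\bu_\nu}$ in \eqref{eq:phieta} would be undefined at $g=\id$, $\gamma=\nu$ --- the very identity you invoke. The intended reading is that condition (ii) only excludes $\mi s_k\prec\mi$; since an antidominant $\mi$ is $\preceq$-minimal in its orbit, $\mi s_k\succeq\mi$ for all $k$, so $(\mi,\mi)\in\XX_\ff^{(2)}$ for every $\mi\in\XX_\ff^-$ and \eqref{def:eta} applies directly, which is also what makes the lemma's statement well posed.
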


\begin{proof}
Consider the element $\eta\in\End_\A(\T_{\ff})$ whose coordinate functions are given by
\begin{equation*}
c_{\mk,\ml}(\eta)=\left\{\begin{array}{ll}
1 & \mbox{if $\mk=\ml\sim \mi$},\\
0 & \mbox{otherwise},
\end{array}\right.
\end{equation*}
for all $\mk, \ml \in X_{\ff}$. Then $\eta$ satisfies all the relations in \eqref{eq:cij}, and so we have $\eta \in \Sc_{q}$. The only element $c_{\mk,\ml}$ with $(\mk,\ml)\in X_{\ff}^{(2)}$ which does not annihilate $\eta$ is $c_{\mi,\mi}$. Thus $\eta=\eta_{\mi,\mi}$ since $c_{\mi,\mi}(\eta)=1=c_{\mi,\mi}(\eta_{\mi,\mi})$. The lemma follows.
\end{proof}

\begin{lem}  \label{lem:map}
Let $\psi\in\End_{\Sc_{q}}(\T_{\ff})$, $\mi \in X_{\ff}$, and write
$\psi(v_{\mi})=\sum_{\mj}\alpha_{\mj}v_{\mj}$.
Then $\alpha_{\mj}\neq 0$ implies $\mj\sim\mi$.
\end{lem}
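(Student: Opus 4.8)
The plan is to show that each idempotent $\eta_{\mi,\mi}$ (for $\mi \in \XX_\ff^-$) acts on $\T_\ff$ as the projection onto the sum of linkage-class summands $\T_\gamma$ with $\bu_\gamma \sim \mi$, and then to use that $\psi$ commutes with all of these. First I would fix $\mi \in \XX_\ff$ and let $\gamma$ be its linkage class, so $\mi \in \T_\gamma$; let $\mj_0 = \bu_\gamma \in \XX_\ff^-$ be the anti-dominant weight in $\gamma$. By the previous lemma, $\eta_{\mj_0,\mj_0} \in \Sc_q$ is the idempotent with $\eta_{\mj_0,\mj_0} v_{\mk} = v_{\mk}$ if $\mk \sim \mj_0$ (equivalently $\mk \sim \mi$) and $\eta_{\mj_0,\mj_0} v_{\mk} = 0$ otherwise. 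In particular $\eta_{\mj_0,\mj_0} v_{\mi} = v_{\mi}$.

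The key computation is then the following: since $\psi \in \End_{\Sc_q}(\T_\ff)$, it commutes with the action of $\eta_{\mj_0,\mj_0} \in \Sc_q$. Hence
\[
\psi(v_{\mi}) = \psi(\eta_{\mj_0,\mj_0} v_{\mi}) = \eta_{\mj_0,\mj_0}\, \psi(v_{\mi}) = \eta_{\mj_0,\mj_0}\Big(\sum_{\mj} \alpha_{\mj} v_{\mj}\Big) = \sum_{\mj\,:\,\mj \sim \mi} \alpha_{\mj} v_{\mj}.
\]
Comparing coefficients of $v_{\mj}$ on both sides and using that $\{v_{\mj}\}$ is a basis of $\T_\ff$, we conclude $\alpha_{\mj} = 0$ whenever $\mj \not\sim \mi$. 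This is exactly the claim.

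There is no real obstacle here; the only subtlety is purely bookkeeping, namely making sure that the idempotent one feeds in is indexed by an element of $\XX_\ff^-$ (so that the preceding lemma applies literally), which is why one passes to $\mj_0 = \bu_\gamma$ rather than trying to use $\eta_{\mi,\mi}$ directly when $\mi$ is not anti-dominant. If one prefers, the same argument can be run with the family $\{\eta_{\mj,\mj} : \mj \in \XX_\ff^-\}$ all at once, noting that these are orthogonal idempotents summing to the identity of $\Sc_q$, and that $\psi$ commutes with each of them; then $\psi$ preserves each isotypic piece $\eta_{\mj,\mj}\T_\ff = \bigoplus_{\gamma : \bu_\gamma = \mj}\T_\gamma = \T_\gamma$, which is the span of $\{v_{\mk} : \mk \sim \mj\}$, giving the statement. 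I would present the short direct version above.
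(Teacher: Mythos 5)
Your proof is correct and follows essentially the same route as the paper: both rely on the preceding idempotent lemma and the fact that $\psi$ commutes with the elements $\eta_{\mk,\mk}\in\Sc_q$ for $\mk\in\XX_\ff^-$. The only (immaterial) difference is that you apply the idempotent $\eta_{\bu_\gamma,\bu_\gamma}$ of the linkage class of $\mi$, which fixes $v_\mi$, whereas the paper applies the idempotents $\eta_{\mk,\mk}$ of the other classes, which annihilate $v_\mi$; both yield the vanishing of $\alpha_\mj$ for $\mj\not\sim\mi$ by comparing coefficients in the standard basis.
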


\begin{proof}
Take any $\mk\not\sim\mi$. We have
\begin{equation}
0=\psi(\eta_{\mk,\mk}v_{\mi})=\eta_{\mk,\mk}(\psi v_{\mi})
=\eta_{\mk,\mk}(\sum_{\mj}\alpha_{\mj}v_{\mj})=\sum_{\mj\sim\mk}\alpha_{\mj}v_{\mj}.
\end{equation}
Therefore, we have $\alpha_{\mj}=0$ if $\mj\sim\mk\not\sim\mi$.
\end{proof}

%

Let us denote the right action of $\HH$ on $\T_{\ff}$ by $\Psi$. This action $\Psi$ is faithful if $X_\ff$ contains at least one regular $W$-orbit, where ``regular'' means that the orbit has cardinality $|W|$.  

\begin{thm} [$q$-Schur duality]
 \label{thm:double}
Suppose $X_\ff$ contains at least one regular $W$-orbit. The algebras $\Sc_{q}$ and $\HH$ satisfy the double centralizer property, i.e.,
\begin{align}
  \label{eq:duality}
\begin{split}
\Sc_{q} =\;& \End_{\HH}(\T_{\ff}),
\\
& \End_{\Sc_{q}}(\T_{\ff}) =\Psi(\HH).
\end{split}
\end{align}
Moreover, we have $\Psi(\HH) \cong \HH$.
\end{thm}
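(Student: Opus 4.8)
The plan is to prove the two equalities in \eqref{eq:duality} together with the injectivity statement $\Psi(\HH)\cong\HH$, using the decomposition $\T_{\ff}=\bigoplus_{\gamma\in\Lambda_{\ff}}x_\gamma\HH$ from Lemma~\ref{lem:iden}. The first equality $\Sc_{q}=\End_{\HH}(\T_{\ff})$ is true by definition, so the real content is the second equality and the faithfulness of $\Psi$. Since $X_\ff$ is assumed to contain a regular $W$-orbit, there is a linkage class $\gamma_0\in\Lambda_{\ff}$ with $J_{\gamma_0}=\emptyset$, hence $W_{\gamma_0}=\{\id\}$ and $x_{\gamma_0}\HH\cong\HH$ as a right $\HH$-module; this is the key structural input.

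First I would establish $\Psi(\HH)\cong\HH$: the summand $\T_{\gamma_0}\cong x_{\gamma_0}\HH\cong\HH$ is a free rank-one right $\HH$-module, so $\HH$ acts faithfully on it, hence faithfully on all of $\T_{\ff}$, giving $\Psi(\HH)\cong\HH$. Next, for the inclusion $\Psi(\HH)\subseteq\End_{\Sc_{q}}(\T_{\ff})$: every right $\HH$-linear endomorphism commutes with the left $\Sc_{q}$-action by the very definition of $\Sc_{q}=\End_{\HH}(\T_{\ff})$, so right multiplication by any $H\in\HH$ lies in $\End_{\Sc_{q}}(\T_{\ff})$. The substance is the reverse inclusion $\End_{\Sc_{q}}(\T_{\ff})\subseteq\Psi(\HH)$.

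For this, let $\psi\in\End_{\Sc_{q}}(\T_{\ff})$. The strategy is to show $\psi$ is determined by its effect on the regular summand and that this effect is realized by some $H\in\HH$. Concretely: by Lemma~\ref{lem:map}, $\psi$ preserves each $\T_\gamma$, so $\psi(v_{\bu_{\gamma_0}})=\sum_{w}\alpha_w v_{\bu_{\gamma_0}\cdot w}$ for scalars $\alpha_w\in\A$; under $\Omega_{\gamma_0}$ this reads $\psi(x_{\gamma_0})=x_{\gamma_0}H$ where $H=\sum_w \alpha_w H_w\in\HH$ (using $x_{\gamma_0}=1$ when $W_{\gamma_0}$ is trivial, up to the normalization in Lemma~\ref{lem:iden}). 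Then $\psi$ and $\Psi(H)$ agree on $v_{\bu_{\gamma_0}}$, hence on all of $\T_{\gamma_0}$ since both are right $\HH$-linear and $v_{\bu_{\gamma_0}}$ generates $\T_{\gamma_0}$ over $\HH$. To propagate this to the other summands, I would use that for any $\gamma\in\Lambda_{\ff}$ the basis element $\phi_{\gamma\gamma_0}^{g}\in\Sc_{q}$ (or the corresponding $\eta_{\bu_\gamma\cdot g,\bu_{\gamma_0}}$, via Proposition~\ref{prop:basis}) maps $\T_{\gamma_0}$ onto a spanning set of $\T_\gamma$ — indeed $\eta_{\bu_\gamma,\bu_{\gamma_0}}(v_{\bu_{\gamma_0}})$ together with the $\HH$-action generates $\T_\gamma$, because $\bu_{\gamma_0}$ being regular means $v_{\bu_{\gamma_0}}$ maps to something hitting $v_{\bu_\gamma}$ up to lower terms. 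Since $\psi$ commutes with these $\Sc_{q}$-elements and agrees with $\Psi(H)$ on $\T_{\gamma_0}$, it agrees with $\Psi(H)$ everywhere, so $\psi=\Psi(H)\in\Psi(\HH)$.

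The main obstacle I anticipate is the propagation step: verifying precisely that the regular summand $\T_{\gamma_0}$ generates $\T_{\ff}$ as an $(\Sc_{q},\HH)$-bimodule, i.e. that $\Sc_{q}\cdot\T_{\gamma_0}=\T_{\ff}$. This requires knowing that for each $\gamma$ there is some $g\in\D_{\gamma\gamma_0}$ with $\eta_{\bu_\gamma\cdot g,\bu_{\gamma_0}}$ producing a vector whose expansion includes $v_{\bu_\gamma}$ with an invertible coefficient — which should follow from $W_{\gamma_0}=\{\id\}$ forcing $\D_{\gamma\gamma_0}=\D_\gamma$ and examining \eqref{actioneta}, but the bookkeeping with the $q$-powers and with Lemma~\ref{lem:map} needs care. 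An alternative, perhaps cleaner, route to the reverse inclusion is the standard double-centralizer argument: $\T_{\ff}$ is a faithful $\HH$-module (shown above) that is a summand of a sum of the form $(x_{\gamma_0}\HH)^{\oplus m}=\HH^{\oplus m}$-plus-other-summands, but since $\HH$ itself appears as a direct summand, $\HH\hookrightarrow\T_{\ff}$ splits, and then $\End_{\Sc_{q}}(\T_{\ff})\hookrightarrow\End_{\Sc_{q}}(\HH\text{-part})$ combined with $\End_{\Sc_{q}}(\HH)=\HH$ — I would choose whichever of these two presentations is shorter after checking the details.
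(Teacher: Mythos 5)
Your overall route is the paper's own: fix a regular linkage class $\gamma_0$ (the paper calls it $\omega$), use Lemma~\ref{lem:map} to see that any $\psi\in\End_{\Sc_{q}}(\T_{\ff})$ preserves each $\T_\gamma$, and propagate from the regular summand via the basis elements $\phi^g_{\gamma\gamma_0}\in\Sc_{q}$; the faithfulness argument giving $\Psi(\HH)\cong\HH$ is also identical. However, one step as written is wrong, and in fact circular: you conclude that $\psi$ and $\Psi(H)$ agree on all of $\T_{\gamma_0}$ ``since both are right $\HH$-linear.'' Neither map is right $\HH$-linear. The right $\HH$-linearity of $\psi$ (equivalently, $\psi=\Psi(H)$ for some $H$) is precisely what the theorem asserts, so it cannot be assumed; $\psi$ is only known to commute with the left $\Sc_{q}$-action. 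And $\Psi(H)$ commutes with the right $\HH$-action only when $H$ is central, since $\Psi(H)(vh)=vhH$ while $\Psi(H)(v)h=vHh$. The same problem recurs in your propagation step, where you generate $\T_\gamma$ from $\eta_{\bu_\gamma,\bu_{\gamma_0}}(v_{\bu_{\gamma_0}})$ ``together with the $\HH$-action'': transporting agreement along the right $\HH$-action again needs the $\HH$-linearity of $\psi$.

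The gap is repaired by the very mechanism you already use, which is exactly the paper's argument: since $W_{\gamma_0}=\{\id\}$ one has $\D_{\gamma\gamma_0}=\D_\gamma$, the relevant set $\mathcal{C}(\gamma,g,\gamma_0)$ is just $\{g\}$, and hence $\eta_{\bu_\gamma\cdot g,\bu_{\gamma_0}}(v_{\bu_{\gamma_0}})=v_{\bu_\gamma\cdot g}$ exactly (no lower terms), i.e.\ $\phi^g_{\gamma\gamma_0}(x_{\gamma_0})=q^{z}x_\gamma H_g$ for some $z\in\Z$. Thus $\Sc_{q}\cdot v_{\bu_{\gamma_0}}$ is all of $\T_{\ff}$, including the case $\gamma=\gamma_0$, so $\psi$ is determined by the single value $\psi(x_{\gamma_0})=x_{\gamma_0}H$ (which lies in $\T_{\gamma_0}$ by Lemma~\ref{lem:map}): one computes $\psi(x_\gamma H_g)=q^{-z}\phi^g_{\gamma\gamma_0}(\psi(x_{\gamma_0}))=q^{-z}\phi^g_{\gamma\gamma_0}(x_{\gamma_0}H)=q^{-z}\phi^g_{\gamma\gamma_0}(x_{\gamma_0})H=x_\gamma H_g H$, where the $\HH$-linearity invoked is that of $\phi^g_{\gamma\gamma_0}\in\Sc_{q}$, not of $\psi$. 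This yields $\psi=\Psi(H)$ on all of $\T_{\ff}$ with no intermediate claim about agreement on $\T_{\gamma_0}$. Your alternative suggestion --- the classical lemma that the double centralizer property holds whenever the regular module $\HH$ occurs as an $\HH$-module direct summand of $\T_{\ff}$ --- is valid and amounts to the same computation in abstract form, but as stated in the proposal it is only sketched, not carried out.
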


\begin{proof}
The first statement and the inclusion $\End_{\Sc_{q}}(\T_{\ff}) \supseteq \Psi(\HH)$ are clear from definitions.
Let $\psi \in \End_{\Sc_{q}}(\T_{\ff})$. Since $X_{\ff}$ contains at least one regular $W$-orbit, we can fix a regular class $\omega\in\Lambda_{\ff}$ (hence $J_\omega=\emptyset$ and $W_{\omega}=\{\id\}$). Then $\phi^g_{\gamma \omega} \cdot x_\omega = q^z x_\gamma H_g$ (for some $z\in \Z$), for any $\gamma \in \Lambda_{\ff}$. It follows by Lemma~ \ref{lem:map} that each endomorphism $\psi\in\End_{\Sc_{q}}(\T_{\ff})$ maps each subspace $\T_{\gamma}$ to itself. Note that $\psi$ is completely determined by its value on $\psi(x_\omega)$, thanks to $\phi^g_{\gamma \omega} \in \Sc_{q}$ by Proposition~\ref{prop:basis} and hence $\psi(x_\gamma H_g) =q^{-z} \psi(\phi^g_{\gamma \omega} \cdot x_\omega) =q^{-z} \phi^g_{\gamma \omega} \cdot \psi ( x_\omega)$. Therefore we have a natural identification
$\End_{\Sc_{q}}(\T_{\ff}) \cong \T_\omega$, $\psi \mapsto \psi(x_\omega)$. Combining this with another natural identification $\Psi(\HH) \cong \T_\omega$, $\Psi(h)\mapsto x_\omega h =\Psi(h) (x_\omega)$, we have proved that $\End_{\Sc_{q}}(\T_{\ff}) =\Psi(\HH)$.

Since $\T_{\ff}$  contains a regular representation of $\HH$, the right action of $\HH$ on $\T_{\ff}$ is faithful, i.e., $\Psi(\HH) \cong \HH$.
\end{proof}

\begin{example}
 \label{ex:S=H}
If we take $X$ to be a single regular orbit of $W$, the associated $q$-Schur algebra is isomorphic to the Hecke algebra $\HH$, and $\T_{\ff}$ is the regular $\HH$-bimodule.
\end{example}

\begin{rem}
We suspect Theorem~\ref{thm:double} remains valid after the removal of the condition ``$X_\ff$ contains at least one regular $W$-orbit", and this calls for some new argument.
\end{rem}

\begin{rem}
The notion of $q$-Schur algebra and the $q$-Schur duality admit classical counterparts, which can be obtained by taking the $q\mapsto 1$ limit.
\end{rem}

\subsection{The canonical basis for $\T_{\ff}$}

The bar involution on $\HH$ is the $\Z$-algebra automorphism defined by
$\overline{H_k}=H_k^{-1}\ (1\leq k\leq d)$ and $\bar{q}=q^{-1}$. Recall the Bruhat order ``$<$'' on $W$.
Let  $J \subseteq \{ 1,2,\ldots,d \}$. Then, for each $w \in\D_J$, there exists (cf. \cite{KL79, Deo87}) a unique element $C^J_w \in x_J \HH$ such that
\begin{enumerate}
\item $\overline{C^J_w} = C^J_w$,
\item
$
C^J_w \in  x_J \Big(H_w + \sum_{{y \in\D_J, \, y < w}} q \Z[q] H_y \Big).$
\end{enumerate}
Moreover, the elements $\{C^J_w~ \vert~ w \in\D_J\}$ forms a $\Q(v)$-basis of $x_J \HH$ (called the canonical basis or parabolic KL basis).
In case $J=\emptyset$ and hence $W_\emptyset=\{\id \}$, we are back to the original setting of Kazhdan-Lusztig and  shall write $C_w=C_w^\emptyset$.

Since the $q$-symmetrizer $x_\gamma$ is bar invariant (and it is equal to $C_{w_o^\gamma}$ in $\HH$),  $x_\gamma\HH$ is a bar invariant right $\HH$-submodule of $\HH$. The identifications
$\Omega_\gamma: \T_\gamma \cong x_\gamma\HH$ $(v_{\bu_\gamma} \mapsto x_\gamma)$ and $\Omega: \T_{\ff}\cong \oplus_{\gamma\in\Lambda_{\ff}}x_\gamma\HH$ in Lemma~\ref{lem:iden} induces a bar involution on $\T_\gamma$, for all $\gamma \in \Lambda_{\ff}$, and on $\T_{\ff}$.
More explicitly, the bar involution on the $\A$-module $\T$ (or $\T_{\ff}$) can be characterized by
\begin{eqnarray}
\label{baranti}
\begin{split}
\overline{v_{\mi}} &=v_{\mi},\quad\quad \mbox{for $\mi\in X^-$,}
\\
\overline{v_{\mi}h} &=\overline{v_{\mi}}\overline{h},
\quad\mbox{for  $\mi\in X$ (or $X_{\ff}$), and $h\in\HH$}.
\end{split}
\end{eqnarray}
Note by \eqref{baranti} that $v_{\bu_\gamma}$  are bar invariants.

By the identification $\Omega: \T_{\ff}\cong \oplus_{\gamma\in\Lambda_{\ff}}x_\gamma\HH$, the $\A$-module $\T_{\ff}$ admits a canonical basis
\[
\B (\T_{\ff}) =\Big\{C_{\bu_\gamma  w} := \Omega^{-1}(C_w^{J_\gamma}) ~\big |~\gamma \in \Lambda_{\ff}, ~ w \in\D_\gamma \Big\}.
\]
The canonical basis $\B(\T_{\ff})$ on $\T_{\ff}$ can be characterized by the following two properties:
\begin{enumerate}
\item $\overline{C_{\mi}} = C_{\mi}$, for $\mi \in X_{\ff}$;
\item
$C_{\mi} \in  v_{\mi} + \sum_{{y \in\D_\gamma, \, y < w}} q \Z[q] v_{\bu_\gamma  y}$, for $\mi =\bu_\gamma  w$ with  $\gamma \in \Lambda_{\ff},w \in \D_\gamma$.
\end{enumerate}

\subsection{The canonical basis for $\Sc_{q}$}
  \label{secbar}

For $(\gamma,g,\nu)\in\Xi$, set $g_{\gamma\nu}^+$ to be the longest element in $ W_\gamma g W_\nu$. In particular, $\id_{\nu\nu}^+=w_\circ^\nu$ is the longest element in $ W_\nu$.

\begin{lem}\label{eq:xandC}
Let $(\gamma,g,\nu)\in\Xi$. Then we have
\begin{itemize}
\item[(1)] $ W_\gamma g  W_\nu = \{w \in W ~|~ g \leq w \leq g^+_{\gamma\nu}\}$;

\item[(2)] $H_{ W_\gamma g  W_\nu}
= q^{-\ell(g^+_{\gamma\nu})} C_{g^+_{\gamma\nu}} + \sum_{\substack{y\in \D_{\gamma\nu}\\
 y < g }} c^{(\gamma,\nu)}_{y,g} C_{y^+_{\gamma\nu}}$,
for $c^{(\gamma,\nu)}_{y,g}\in\A.$
\end{itemize}
\end{lem}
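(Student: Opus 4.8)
The plan is to prove the two assertions about the double coset $W_\gamma g W_\nu$ in turn, the first being a purely combinatorial statement about Coxeter groups and the second a statement about the Kazhdan--Lusztig basis.

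For part (1), I would use the standard theory of minimal-length double coset representatives. Since $g \in \D_{\gamma\nu} = \D_\gamma \cap \D_\nu^{-1}$, every element of $W_\gamma g W_\nu$ can be written as $w = a g b$ with $a \in W_\gamma$, $b \in W_\nu$, and (choosing the writing appropriately, using that $g$ is the minimal double coset representative and the standard factorization results) $\ell(w) = \ell(a) + \ell(g) + \ell(b)$; in particular $g \leq w$ in the Bruhat order, and $w \leq g^+_{\gamma\nu}$ since $g^+_{\gamma\nu} = w_\circ^\gamma g w_\circ^\nu$ (or rather the longest element of the finite set $W_\gamma g W_\nu$) dominates everything in the double coset. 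Conversely, for the inclusion $\{w : g \leq w \leq g^+_{\gamma\nu}\} \subseteq W_\gamma g W_\nu$, I would invoke the fact that a double coset $W_\gamma g W_\nu$ is a Bruhat interval: this is a known result (it follows, e.g., from the description of $W_\gamma g W_\nu$ as the set of $w$ whose minimal double coset representative is $g$, together with the lifting property of the Bruhat order). I would cite Deodhar \cite{Deo87} or Curtis's work on double cosets for this fact rather than reprove it.

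For part (2), I would work in the full Hecke algebra $\HH$ and compute $H_{W_\gamma g W_\nu} = \sum_{w \in W_\gamma g W_\nu} q^{-\ell(w)} H_w$. The key observation is that $H_{W_\gamma g W_\nu}$ is bar-invariant: indeed, since $W_\gamma g W_\nu$ is the Bruhat interval $[g, g^+_{\gamma\nu}]$ by part (1), and since $x_\gamma = C_{w_\circ^\gamma}$ and $x_\nu = C_{w_\circ^\nu}$ are bar-invariant, one checks that $q^{\ell(w_\circ^\nu)} H_{W_\gamma g W_\nu}$ arises (up to the appropriate normalization) from applying bar-invariant operators; more directly, $\overline{H_{W_\gamma g W_\nu}} = H_{W_\gamma g W_\nu}$ follows from the standard fact that $\overline{x_\gamma H_g x_\nu}$ equals $x_\gamma H_g x_\nu$ up to a power of $q$, since each of $x_\gamma$, $x_\nu$ is bar-invariant and $H_g x_\nu$ for $g \in \D_\nu^{-1}$ behaves well. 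Granting bar-invariance, $H_{W_\gamma g W_\nu}$ is a bar-invariant element of $\HH$ lying in $\sum_{w \leq g^+_{\gamma\nu}} \A H_w$ with leading term (the coefficient of $H_{g^+_{\gamma\nu}}$) equal to $q^{-\ell(g^+_{\gamma\nu})}$; hence it expands in the Kazhdan--Lusztig basis $\{C_w\}$ with coefficients in $\A$ and with $C_{g^+_{\gamma\nu}}$ appearing with coefficient $q^{-\ell(g^+_{\gamma\nu})}$ (using that $C_w \in H_w + \sum_{y<w} q\Z[q] H_y$, so matching top-degree coefficients forces this). Finally, to see that only basis elements $C_{y^+_{\gamma\nu}}$ indexed by longest elements of double cosets $W_\gamma y W_\nu$ with $y \in \D_{\gamma\nu}$, $y < g$ appear, I would use that $H_{W_\gamma g W_\nu}$ lies in the $(x_\gamma, x_\nu)$-bimodule $x_\gamma \HH x_\nu$ (equivalently is fixed on the left by each $H_i$, $i \in J_\gamma$, and on the right by each $H_j$, $j \in J_\nu$, up to scalar $q^{-1}$): any $C_w$ appearing must therefore satisfy $H_i C_w = q^{-1} C_w$ for $i \in J_\gamma$ and similarly on the right, which forces $w$ to be the longest element in its double coset, i.e. $w = y^+_{\gamma\nu}$ for a unique $y \in \D_{\gamma\nu}$; and the support condition $w \leq g^+_{\gamma\nu}$ together with part (1) forces $y \leq g$, while bar-invariance and the leading-term analysis rule out $y = g$ contributing anything beyond the stated $q^{-\ell(g^+_{\gamma\nu})} C_{g^+_{\gamma\nu}}$ term, and more care shows $y < g$ strictly for the remaining terms.

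The main obstacle I anticipate is the bar-invariance of $H_{W_\gamma g W_\nu}$ together with the precise identification of which $C_w$ can occur — that is, showing rigorously that the expansion is supported on $\{C_{y^+_{\gamma\nu}} : y \in \D_{\gamma\nu}\}$ with the exact index set $y \le g$. This requires the characterization of $x_\gamma \HH x_\nu$ as the span of $\{C_{y^+_{\gamma\nu}}\}$ (a parabolic analogue of the two-sided cell / bimodule structure), which I would either extract from \cite{Deo87, KL79} or prove directly via the $H_i$-eigenvalue argument sketched above; the Bruhat-interval statement in part (1) is more classical and I expect it to go smoothly by citation.
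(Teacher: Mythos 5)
Your part (1) is fine: the interval property of parabolic double cosets is exactly the kind of statement to quote (the paper itself disposes of the whole lemma by citing \cite{Cur85}, where this is proved), and your additive-length factorization remarks are standard. The problem is in part (2): your declared ``key observation'' that $H_{W_\gamma g W_\nu}$ is bar-invariant is false, and so is the claim that $\overline{x_\gamma H_g x_\nu}$ equals $x_\gamma H_g x_\nu$ up to a power of $q$. Simplest counterexample: take $\gamma,\nu$ regular, so $J_\gamma=J_\nu=\emptyset$, $x_\gamma=x_\nu=1$ and $H_{W_\gamma gW_\nu}=q^{-\ell(g)}H_g$, which is not bar-invariant (nor is $H_g$, since $\overline{H_g}=H_{g^{-1}}^{-1}=H_g+\text{lower terms}$). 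More structurally, bar-invariance would contradict the very formula you are proving: since each $C_w$ is bar-fixed, a bar-invariant element has bar-invariant coefficients in the canonical basis, whereas the coefficient of $C_{g^+_{\gamma\nu}}$ is $q^{-\ell(g^+_{\gamma\nu})}$. (A rank-two check: in $W=S_3$ with $J_\gamma=J_\nu=\{1\}$, $g=s_2$, one computes $H_{W_\gamma s_2 W_\nu}=q^{-3}C_{w_\circ}-q^{-1}C_{s_1}$, visibly not bar-fixed.)

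The good news is that bar-invariance is not needed, and the rest of your skeleton is the right route: (i) by the unique factorizations $w=au$ ($a\in W_\gamma$, lengths additive) and $w=vb$ ($b\in W_\nu$), one gets $H_{W_\gamma gW_\nu}\in x_\gamma\HH\cap\HH x_\nu$; (ii) the canonical basis elements lying in this bimodule are exactly the $C_{y^+_{\gamma\nu}}$, $y\in\D_{\gamma\nu}$ (your eigenvalue argument, or \cite{Deo87}); (iii) triangularity $C_w\in H_w+\sum_{y<w}q\Z[q]H_y$ together with part (1) pins the coefficient of $C_{g^+_{\gamma\nu}}$ to $q^{-\ell(g^+_{\gamma\nu})}$ and shows the remaining terms involve $y^+_{\gamma\nu}<g^+_{\gamma\nu}$. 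One more small repair: ``$y^+_{\gamma\nu}<g^+_{\gamma\nu}$ forces $y<g$'' does not follow from part (1) alone as you assert; it needs the standard order-compatibility of minimal (double) coset representatives (Deodhar's lemma: $x\le z$ implies their minimal representatives satisfy the same inequality), applied to $y^+_{\gamma\nu}\le g^+_{\gamma\nu}$. With the bar-invariance step deleted and these two citations inserted, your argument becomes a correct self-contained proof, whereas the paper simply refers to Curtis.
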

\begin{proof}
See \cite{Cur85}.
\end{proof}

We define a bar involution $\bar{\phantom{x}}$ on $\Sc_{q}$ as follows: for each $f \in \Hom_{\HH}(x_\nu \HH, x_\gamma \HH) \subset \Sc_{q}$, let $\overline{f}\in\Hom_{\HH}(x_\nu \HH, x_\gamma \HH) \subset \Sc_{q}$ be the
$\HH$-linear map which sends $x_\nu=C_{w_\circ^\nu}$ to $\overline{f(C_{w_\circ^\nu})}$. That is, we have
\begin{equation}
    \label{eq:barS}
\overline{f}(x_{\nu'} h) = \delta_{\nu',\nu} \overline{f(x_\nu)} h,
\quad
\mbox{for $h \in \HH$}.
\end{equation}
Hence it follows from Lemma \ref{eq:xandC} that
\begin{eqnarray}
\phi_{\gamma\nu}^g(C_{w_\circ^\nu})
&=& q^{\ell(w_\circ^\nu)-\ell(g^+_{\gamma\nu})} C_{g^+_{\gamma\nu}}
 + \sum_{\substack{y\in\D_{\gamma\nu}\\y < g}}
q^{\ell(w_\circ^\nu)} c_{y,g}^{(\gamma,\nu)} C_{y^+_{\gamma\nu}},
    \label{eq:eA}
\\
\overline{\phi_{\gamma\nu}^g}(C_{w_\circ^\nu})
&=& q^{\ell(g^+_{\gamma\nu})-\ell(w_\circ^\nu)} C_{g^+_{\gamma\nu}}
+ \sum_{\substack{y\in\D_{\gamma\nu}\\y < g}}
q^{-\ell(w_\circ^\nu)}\overline{c_{y,g}^{(\gamma,\nu)}} C_{y^+_{\gamma\nu}}.
    \label{eq:eAbar}
\end{eqnarray}

For any $(\gamma,g,\nu)\in\Xi$, we set
\begin{equation}\label{def:dA}
[\phi_{\gamma\nu}^g]=q^{\ell(g_{\gamma\nu}^+)-\ell(w_\circ^\nu)}\phi_{\gamma\nu}^g.
\end{equation}
Then $\{[\phi_{\gamma\nu}^g] \mid (\gamma,g,\nu)\in\Xi \}$ forms an $\A$-basis for $\Sc_q$, which is called a {\em standard basis}. Thanks to \eqref{eq:eA} and \eqref{eq:eAbar}, we have
\begin{equation}\label{barbraket}
\overline{[\phi_{\gamma\nu}^g]}\in [\phi_{\gamma\nu}^g]+\sum_{g>y\in\D_{\gamma\nu}}\A[\phi_{\gamma\nu}^y].
\end{equation}

Similar to \cite{Du92}, we define
\begin{equation}\label{canobasis}
   \{\phi_{\gamma\nu}^g\} \in  \Hom_\HH(x_\nu\HH, x_\gamma\HH), \text{  and hence } \{\phi_{\gamma\nu}^g\} \in \Sc_{q},
\end{equation}
   by requiring
\begin{equation}
  \label{canonicalbasis}
  \{\phi_{\gamma\nu}^g\} (C_{w^{\nu}_\circ}) = C_{g^+_{\gamma\nu}}.
\end{equation}
It follows by \eqref{eq:barS} that $\{\phi_{\gamma\nu}^g\}$ is bar invariant, i.e.,
\begin{equation}\label{barA}
     \overline{\{\phi_{\gamma\nu}^g\}} = \{\phi_{\gamma\nu}^g\}.
\end{equation}

Following \cite[(2.c), Lemma~3.8]{Du92}, we have
\begin{equation}   \label{eq:canonical}
\{\phi_{\gamma\nu}^g\} \in [\phi_{\gamma\nu}^g] + \sum_{y< g}q\Z[q]   \, [\phi_{\gamma\nu}^y].
\end{equation}
More precisely, we have
\begin{equation}   \label{eq:CB1}
\{\phi_{\gamma\nu}^g\} =  [\phi_{\gamma\nu}^g] + \sum_{y< g}  q^{\ell(g^+_{\gamma\nu})-\ell(y^+_{\gamma\nu})} P_{y^+_{\gamma\nu},g^+_{\gamma\nu}} [\phi_{\gamma\nu}^y].
\end{equation}
where $P_{y^+_{\gamma\nu},g^+_{\gamma\nu}}$ are Kazhdan-Lusztig polynomials.

By Proposition~ \ref{prop:basis}, \eqref{def:dA} and \eqref{eq:canonical}, the set
$\B (\Sc_{q}) =\big\{\{\phi_{\gamma\nu}^g\}\ |\ (\gamma,g,\nu) \in \Xi \big\}$ forms an $\A$-basis of $\Sc_{q}$, which is called {\it the canonical basis}.
We summarize this as follows.

\begin{thm} \label{thm:CB-Sjj}
There exists a canonical basis $\B (\Sc_{q})=\big\{\{\phi_{\gamma\nu}^g\}\ |\ (\gamma,g,\nu) \in \Xi \big\}$ for $\Sc_{q}$,
which is characterized by the properties \eqref{barA}--\eqref{eq:canonical}.
\end{thm}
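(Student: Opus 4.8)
The plan is to construct the canonical basis element $\{\phi_{\gamma\nu}^g\}$ directly via the characterization \eqref{canonicalbasis}, and then verify that this yields a well-defined bar-invariant element lying in the required position relative to the standard basis. First I would observe that for fixed $\gamma, \nu$, the map $f \mapsto f(C_{w_\circ^\nu})$ gives an $\A$-module isomorphism $\Hom_\HH(x_\nu\HH, x_\gamma\HH) \cong x_\gamma\HH$, since $x_\nu\HH$ is a cyclic $\HH$-module generated by $x_\nu = C_{w_\circ^\nu}$ and any $\HH$-linear map out of it is determined by the image of the generator (which may be an arbitrary element of $x_\gamma\HH$). Under this isomorphism, the standard basis element $[\phi_{\gamma\nu}^g]$ corresponds (up to the $q$-power in \eqref{def:dA}, combined with \eqref{eq:eA}) to $C_{g^+_{\gamma\nu}} + (\text{lower terms in the parabolic KL basis of } x_\gamma\HH)$, while $\{\phi_{\gamma\nu}^g\}$ corresponds precisely to $C_{g^+_{\gamma\nu}}$. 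So the entire statement reduces, summand by summand, to a comparison inside the bar-invariant module $x_\gamma\HH$ between the parabolic KL basis $\{C_w^{J_\gamma}\}$ and the image of the standard basis.

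Next I would establish bar-invariance \eqref{barA}: by the definition \eqref{eq:barS} of the bar involution on $\Sc_q$, $\overline{\{\phi_{\gamma\nu}^g\}}$ is the $\HH$-linear map sending $x_\nu = C_{w_\circ^\nu}$ to $\overline{\{\phi_{\gamma\nu}^g\}(C_{w_\circ^\nu})} = \overline{C_{g^+_{\gamma\nu}}} = C_{g^+_{\gamma\nu}}$, the last equality because $C_{g^+_{\gamma\nu}}$ is a (parabolic) KL basis element and hence bar-invariant. Since $\{\phi_{\gamma\nu}^g\}$ and $\overline{\{\phi_{\gamma\nu}^g\}}$ agree on the generator $x_\nu$ of $x_\nu\HH$ and are both $\HH$-linear, they coincide. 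Then for the triangularity \eqref{eq:canonical}--\eqref{eq:CB1}, I would compare \eqref{canonicalbasis} with \eqref{eq:eA}: expressing $C_{g^+_{\gamma\nu}}$ in the basis $\{q^{-\ell(w)}H_w\}$ (or rather using the known expansion $C_{g^+_{\gamma\nu}} = \sum_{y \le g^+_{\gamma\nu}} (\text{KL poly}) \cdot q^{\cdot}H_y$ and Lemma~\ref{eq:xandC}), one matches the $P_{y^+_{\gamma\nu}, g^+_{\gamma\nu}}$ coefficients. Concretely, I would invert the unitriangular change of basis in \eqref{eq:eA} to write $C_{g^+_{\gamma\nu}}$ in terms of the $\phi_{\gamma\nu}^y(C_{w_\circ^\nu})$, apply the definition \eqref{def:dA} to convert to $[\phi_{\gamma\nu}^y]$, and read off \eqref{eq:CB1}; the coefficient bookkeeping with the $q$-powers $q^{\ell(g^+_{\gamma\nu}) - \ell(y^+_{\gamma\nu})}$ is exactly as in \cite[Lemma~3.8]{Du92}, whose argument transfers verbatim once Lemma~\ref{eq:xandC} is in hand.

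Finally, since $\{[\phi_{\gamma\nu}^g]\}$ is an $\A$-basis of $\Sc_q$ (Proposition~\ref{prop:basis} together with \eqref{def:dA}) and the transition matrix from $\{\{\phi_{\gamma\nu}^g\}\}$ to $\{[\phi_{\gamma\nu}^g]\}$ is unitriangular with respect to the Bruhat order on $\D_{\gamma\nu}$ by \eqref{eq:canonical}, it follows that $\B(\Sc_q)$ is also an $\A$-basis. Uniqueness given the characterization is the standard argument: if two elements are both bar-invariant and both lie in $[\phi_{\gamma\nu}^g] + \sum_{y<g}q\Z[q][\phi_{\gamma\nu}^y]$, their difference $\delta$ is bar-invariant and lies in $\sum_{y<g}q\Z[q][\phi_{\gamma\nu}^y]$; expanding in the standard basis and using \eqref{barbraket} to argue by downward induction on the Bruhat order that every coefficient must vanish forces $\delta = 0$.

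I expect the main obstacle to be purely organizational rather than conceptual: the argument lives inside each $\Hom_\HH(x_\nu\HH, x_\gamma\HH)$ summand, but the bar involution on $\Sc_q$ is defined summand-wise in \eqref{eq:barS}, so I need to be careful that bar-invariance and the triangularity estimate are genuinely compatible across the direct sum decomposition $\Sc_q = \bigoplus_{\gamma,\nu} \Hom_\HH(x_\nu\HH, x_\gamma\HH)$ — in particular that $\D_{\gamma\nu}$-Bruhat-order triangularity within each block is what the global characterization \eqref{eq:canonical} requires. The genuinely delicate input is Lemma~\ref{eq:xandC}(2), relating $H_{W_\gamma g W_\nu}$ to parabolic KL basis elements $C_{y^+_{\gamma\nu}}$, which is cited from \cite{Cur85}; granting that, everything else is a transcription of Du's type $A$ argument into the present Lie-theoretic language.
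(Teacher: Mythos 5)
Your overall route is the same as the paper's: the element $\{\phi_{\gamma\nu}^g\}$ is defined by prescribing its value $C_{g^+_{\gamma\nu}}$ on the cyclic generator $x_\nu=C_{w_\circ^\nu}$ as in \eqref{canonicalbasis}, bar-invariance \eqref{barA} is read off from the definition \eqref{eq:barS} together with bar-invariance of Kazhdan--Lusztig elements, the triangularity \eqref{eq:canonical}--\eqref{eq:CB1} comes from Lemma~\ref{eq:xandC} via Du's argument, and the basis statement follows from Proposition~\ref{prop:basis}, \eqref{def:dA} and unitriangularity, with uniqueness by the standard induction using \eqref{barbraket}. So there is no difference in strategy.

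There is, however, one incorrect intermediate claim that you should repair. The assertion that $f\mapsto f(C_{w_\circ^\nu})$ identifies $\Hom_{\HH}(x_\nu\HH,x_\gamma\HH)$ with all of $x_\gamma\HH$ (``the image may be an arbitrary element of $x_\gamma\HH$'') is false whenever $J_\nu\neq\emptyset$: the module $x_\nu\HH$ is cyclic but not free, and since $x_\nu H_j=q^{-1}x_\nu$ for $j\in J_\nu$, any admissible image $m=f(x_\nu)$ must satisfy $mH_j=q^{-1}m$; thus evaluation at $x_\nu$ embeds the Hom space into $x_\gamma\HH\cap\HH x_\nu$, whose rank is $|\D_{\gamma\nu}|$ by Proposition~\ref{prop:basis}, strictly smaller in general than the rank $|{}^{J_\gamma}W|$ of $x_\gamma\HH$. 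Consequently the well-definedness of \eqref{canonicalbasis} is a genuine point, not automatic, and ``comparing bases inside $x_\gamma\HH$'' only makes sense inside this smaller submodule, where the only parabolic KL elements occurring are the $C_{w}$ with $w$ maximal in its $(W_\gamma,W_\nu)$-double coset. The fix is already implicit in your own second paragraph: inverting the unitriangular relation of Lemma~\ref{eq:xandC}(2) expresses $C_{g^+_{\gamma\nu}}$ as an $\A$-combination of the elements $\phi^y_{\gamma\nu}(x_\nu)$ with $y\le g$, which simultaneously shows that $\{\phi_{\gamma\nu}^g\}$ is a genuine element of $\Sc_q$ and yields \eqref{eq:CB1}; alternatively, since $g^+_{\gamma\nu}$ is the longest element of $W_\gamma gW_\nu$, one has $H_iC_{g^+_{\gamma\nu}}=q^{-1}C_{g^+_{\gamma\nu}}$ and $C_{g^+_{\gamma\nu}}H_j=q^{-1}C_{g^+_{\gamma\nu}}$ for $i\in J_\gamma$, $j\in J_\nu$, so $C_{g^+_{\gamma\nu}}$ does lie in the admissible submodule. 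With that one claim deleted or corrected, your argument coincides with the paper's proof.
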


\begin{prop}
The commuting actions of $\Sc_q$ and $\HH$ on $\T_\ff$ are compatible with the bar maps, that is,
 \[
 \overline{\eta\cdot v \cdot h}=\overline{\eta} \cdot \overline{v} \cdot \overline{h},
 \]
 for all $\eta\in \Sc_q$, $v\in \T_\ff$ and $h\in \HH$.
\end{prop}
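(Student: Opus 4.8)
The plan is to reduce the compatibility statement to the three ingredients already assembled in the excerpt: the bar involution on $\T_\ff$ defined via $\Omega$ by \eqref{baranti}, the bar involution on $\HH$ with $\overline{v_\mi h}=\overline{v_\mi}\,\overline{h}$, and the bar involution on $\Sc_q$ characterized in \eqref{eq:barS}. Since all three actions are $\A$-(semi)linear over $\Z$ and $\T_\ff$ is generated as an $\HH$-module by the bar-invariant vectors $v_{\bu_\gamma}=x_\gamma$ ($\gamma\in\Lambda_\ff$), by Proposition~\ref{prop:basis} it suffices to check the identity on elements of the form $\eta=\phi_{\gamma\nu}^g$ (or equivalently $\{\phi_{\gamma\nu}^g\}$, or the standard basis $[\phi_{\gamma\nu}^g]$ — whichever is most convenient), $v=v_{\bu_\nu}$, and $h=\id$; the general case then follows by $\HH$-linearity of $\Sc_q$-actions and the multiplicativity $\overline{v\cdot h}=\overline v\cdot\overline h$.

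First I would observe that the $\HH$-action is already bar-compatible with the $\T_\ff$ bar map, which is exactly the content of \eqref{baranti}: $\overline{v\cdot h}=\overline v\cdot\overline h$ for all $v\in\T_\ff$, $h\in\HH$. So the only new thing to verify is $\overline{\eta\cdot v}=\overline\eta\cdot\overline v$ for $\eta\in\Sc_q$, $v\in\T_\ff$, and by the generation argument it is enough to take $v=v_{\bu_\nu}=x_\nu=C_{w_\circ^\nu}$, which is bar-invariant, so the claim becomes $\overline{\eta(x_\nu)}=\overline\eta(x_\nu)$. But this is precisely the defining property \eqref{eq:barS} of the bar involution on $\Sc_q$: for $f\in\Hom_\HH(x_\nu\HH,x_\gamma\HH)$ one has $\overline f(x_\nu)=\overline{f(x_\nu)}$. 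Then for a general $v=x_\nu h$ we compute $\overline{\eta(x_\nu h)}=\overline{\eta(x_\nu)h}=\overline{\eta(x_\nu)}\,\overline h=\overline\eta(x_\nu)\overline h=\overline\eta(\overline{x_\nu}\,\overline h)=\overline\eta(\overline{x_\nu h})$, using $\HH$-linearity of $\eta$ and $\overline\eta$, the $\HH$-bar-compatibility \eqref{baranti} on $\T_\ff$, and \eqref{eq:barS}. Since $\T_\gamma=x_\gamma\HH$ and $\T_\ff=\oplus_\gamma\T_\gamma$, every element of $\T_\ff$ is an $\A$-linear combination of such $x_\nu h$ with $\nu$ ranging over $\Lambda_\ff$, so we are done after noting both sides are additive and $\overline{\phantom x}$-semilinear in each argument. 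Finally, combining with $\overline{\eta\cdot v\cdot h}=\overline{(\eta\cdot v)\cdot h}=\overline{\eta\cdot v}\cdot\overline h=\overline\eta\cdot\overline v\cdot\overline h$ yields the full statement.

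The main (and really only) subtlety is making sure the reduction to generators is clean: one must check that the bar map on $\Sc_q$ is well-defined as an $\A$-semilinear ring-compatible involution — but this is already implicit in the setup of \S\ref{secbar}, where \eqref{barbraket}–\eqref{barA} show $\overline{\phantom x}$ is an involution on the standard basis and fixes the canonical basis. Given that, there is no real obstacle; the proof is essentially an unwinding of definitions \eqref{baranti} and \eqref{eq:barS}. I would present it in about a half page: state the reduction to $v=x_\nu$, invoke \eqref{eq:barS}, handle general $v=x_\nu h$ by the displayed chain of equalities, extend by additivity over the decomposition $\T_\ff=\oplus_\gamma x_\gamma\HH$, and append the trivial last step incorporating $h$. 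No new lemma is needed.
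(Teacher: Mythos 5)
Your proposal is correct and follows essentially the same route as the paper: reduce to $\overline{\eta\cdot v}=\overline\eta\cdot\overline v$ using \eqref{baranti}, then verify it on elements $v=x_\nu h$ via the same chain of equalities $\overline{f(x_\nu h)}=\overline{f(x_\nu)}\,\overline h=\overline f(x_\nu)\overline h=\overline f(\overline{x_\nu h})$, using $\HH$-linearity, \eqref{eq:barS}, and the bar-invariance of $x_\nu$. The extra remarks about reducing $\eta$ to the basis $\phi^g_{\gamma\nu}$ are unnecessary (the argument works for arbitrary $\eta$ directly), but they do no harm.
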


\begin{proof}
We already knew by \eqref{baranti} that
\begin{equation}
 \label{eq:2bar}
 \overline{v \cdot h}= \overline{v} \cdot \overline{h}.
\end{equation}
It remains to verify that $\overline{\eta\cdot v}=\overline{\eta} \cdot \overline{v}$, for all $\eta\in \Sc_q$, $v\in \T_\ff$. To that end, it suffices to check $\overline{f(x_\nu h)} =\bar{f} (\overline{x_\nu h})$, for any $f \in \Hom_{\HH}(x_\nu \HH, x_\gamma \HH) \subset \Sc_{q}$ and $h\in \HH$.
Indeed, by \eqref{eq:barS}, \eqref{eq:2bar} and the fact that $x_\nu$ is bar invariant, we have
\begin{align*}
\overline{f(x_\nu h)} =\overline{f(x_\nu) h} =\overline{f(x_\nu)} \bar{h}
=\overline{f}(x_\nu) \bar{h} =\overline{f}(x_\nu \bar{h})  =\overline{f}( \overline{x_\nu h}).
\end{align*}
The proposition is proved.
\end{proof}

\begin{rem}
  \label{rem:algebra:ABC}
For classical types, the $q$-Schur algebras and $q$-Schur dualities have been constructed in earlier works  \cite{DJ89, Gr97} (also see \cite{BW18}). The uniform formulation of $q$-Schur algebras of arbitrary finite type in this paper starts with a $W$-invariant subset $X_\ff$ of weights. For some distinguished choices of $X_\ff$ in classical types as specified in \S\ref{subsec:Pf}, we recover the earlier constructions {\em loc. cit.}; for example, in type A, our $q$-Schur algebra specializes to the familiar one often denoted by $\Sc_q(n,d)$. The relation between different versions of standard bases in $q$-Schur algebras of type A/B was formulated in \cite{DD91, Gr97}.
An algebraic construction of canonical bases of $q$-Schur algebras in type A was given in \cite{Du92}.
\end{rem}

\begin{rem}
In \cite[Example~ 1.4]{Du94}, Du introduced a particular $q$-Schur algebra of arbitrary type, and constructed its canonical basis. His construction corresponds to the special case of ours by choosing $X_\ff$ to consist of the union of $W$-orbits $W/W_J$, one for each subset $J \subseteq \{1,2,\ldots,d\}$.
\end{rem}

\section{A geometric setting for $q$-Schur algebras}
  \label{sec:geom}

In this section, we provide geometric realizations of the $q$-Schur algebras, the $q$-Schur dualities, and canonical bases.

\subsection{Convolution algebras}

Let $\F_{\q}$ be a finite field of $\q$ elements of characteristic $>3$. Let $\bf G$ be a connected algebraic group
defined over $\F_\q$. Assume $G={\bf G}(\F_\q)$ admits a split maximal torus and Borel subgroup, denoted by $T\subset B$. Let $W$ be the Weyl group of $G$. 
Associated to each subset $J\subset \{1,\ldots, d\}$, we have a standard parabolic subgroup $P_J$ which contains $B$ as a subgroup. In particular, $P_\emptyset =B$.
For $\gamma \in \Lambda_{\ff}$, recalling $J_\gamma$ from \eqref{eq:PP}, we denote $P_\gamma =P_{J_\gamma}$.

Recalling $\Lambda_\ff$ from \eqref{Ln}, we consider the following sets:
\begin{align*}
\sF &=\bigsqcup_{\gamma \in \Lambda_{\ff}} G/P_{\gamma},
\qquad
\sB =G/B.
\end{align*}
Clearly  $G$ acts on $\sF$ and $\sB$. Let $G$ act  diagonally on $\sF\times \sF$, $\sF\times \sB$ and $\sB\times \sB$, respectively.

Recall $X_{\ff}$ from \eqref{eq:Pf} and $\Xi$ from \eqref{eq:Xi}. Note $X_{\ff} =\cup_{\gamma \in \Lambda_{\ff}} X_\gamma$, where $X_\gamma =\{\mi \in X_{\ff}~|~\mi \sim \bu_\gamma\}$.
\begin{lem}
  \label{lem:bijectionorbits}
We have the following natural bijections:
\begin{equation}
\label{bijections}
G \backslash (\sF\times \sF) \longleftrightarrow \Xi, \quad
G \backslash (\sF\times \sB) \longleftrightarrow X_{\ff}, \quad
G \backslash (\sB\times \sB) \longleftrightarrow W.
\end{equation}
\end{lem}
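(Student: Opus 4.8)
The plan is to reduce all three bijections to the classical Bruhat decomposition for $G$ over a finite field, then re-index using the combinatorial identifications already set up in Section~\ref{sec:Schur}. The three cases are genuinely of the same nature, so I would organize the proof around one general statement: for standard parabolic subgroups $P = P_J$ and $Q = P_K$ of $G$, the orbits of $G$ acting diagonally on $G/P \times G/Q$ are in natural bijection with the double cosets $W_J \backslash W / W_K$, via $(gP, g'Q) \mapsto W_J \, w \, W_K$ where $w$ is determined by $g^{-1}g' \in P w Q$. This is standard (it is the parabolic Bruhat decomposition $G = \bigsqcup_{w \in W_J \backslash W / W_K} P w Q$), and I would cite it rather than reprove it; the only thing to note is that the hypothesis ``characteristic $> 3$'' and split $T \subset B$ guarantee we are in the split reductive setting where this applies cleanly.

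Granting that, the three bijections follow by bookkeeping. For $G \backslash (\sB \times \sB)$: here $P = Q = B = P_\emptyset$, so $W_\emptyset \backslash W / W_\emptyset = W$, giving the third bijection directly. For $G \backslash (\sF \times \sB)$: since $\sF = \bigsqcup_{\gamma \in \Lambda_\ff} G/P_\gamma$, we get $G \backslash (\sF \times \sB) = \bigsqcup_{\gamma \in \Lambda_\ff} G \backslash (G/P_\gamma \times G/B) \longleftrightarrow \bigsqcup_{\gamma \in \Lambda_\ff} W_{J_\gamma} \backslash W$. Now $W_{J_\gamma} \backslash W$ is in bijection with ${}^{J_\gamma} W$ (minimal-length representatives), and the assignment $w \mapsto \bu_\gamma \cdot w$ identifies ${}^{J_\gamma} W$ with the linkage class $\XX_\gamma$ (this is exactly the indexing behind Lemma~\ref{lem:iden} and \eqref{eq:XnLn}); summing over $\gamma$ and using $\XX_\ff = \bigsqcup_{\gamma} \XX_\gamma$ yields the second bijection. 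For $G \backslash (\sF \times \sF)$: similarly this decomposes as $\bigsqcup_{\gamma, \nu \in \Lambda_\ff} G \backslash (G/P_\gamma \times G/P_\nu) \longleftrightarrow \bigsqcup_{\gamma,\nu} W_{J_\gamma} \backslash W / W_{J_\nu}$, and $W_{J_\gamma} \backslash W / W_{J_\nu}$ is indexed by the minimal-length double coset representatives $\D_{\gamma\nu}$; collecting the triples $(\gamma, g, \nu)$ with $g \in \D_{\gamma\nu}$ gives precisely $\Xi$ as defined in \eqref{eq:Xi}.

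I would phrase the write-up so that the map in each case is explicitly the one that will be used later in the convolution-algebra constructions: $(gP_\gamma, g'P_\nu) \mapsto (\gamma, g, \nu)$ with $g$ the distinguished double coset representative of $P_\gamma \backslash (g^{-1}g' \text{-coset}) / P_\nu$, and $(gP_\gamma, g'B) \mapsto \bu_\gamma \cdot w$ with $w \in {}^{J_\gamma}W$ the representative of the coset $g^{-1}g' B$. The naturality (i.e. $G$-equivariance and independence of coset representatives) is immediate once one checks that changing $g \mapsto pg$ with $p \in P_\gamma$, or $g' \mapsto q g'$ with $q \in P_\nu$, leaves the associated double coset unchanged — a routine verification I would state without belaboring.

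The main obstacle, such as it is, is not mathematical depth but making sure the combinatorial dictionary is airtight: one must check that ``minimal-length double coset representative'' on the geometric side matches the set $\D_{\gamma\nu} = \D_\gamma \cap \D_\nu^{-1}$ as defined in the text, and that the bijection ${}^{J_\gamma}W \leftrightarrow \XX_\gamma$, $w \mapsto \bu_\gamma \cdot w$ is the correct normalization (in particular that it is a bijection onto the linkage class, using that $\bu_\gamma$ is the unique anti-dominant element and $J_\gamma$ is exactly its dot-stabilizer, by \eqref{eq:PP}). Both facts are essentially restatements of things already established — Lemma~\ref{lem:iden} for the former indexing, and the definition of $\D_{\gamma\nu}$ for the latter — so the proof should be short, amounting to ``apply the parabolic Bruhat decomposition and re-index.''
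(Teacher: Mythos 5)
Your proposal is correct and follows essentially the same route as the paper's proof: both reduce to the (parabolic) Bruhat decomposition $G\backslash(G/P_\gamma\times G/P_\nu)\leftrightarrow \D_{\gamma\nu}$, then re-index via $\D_\gamma\leftrightarrow \XX_\gamma$ and $\Xi\cong\bigsqcup_{\gamma,\nu}\D_{\gamma\nu}$. You merely spell out explicitly the standard double-coset fact and the dictionary with Lemma~\ref{lem:iden} that the paper leaves implicit.
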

(We shall denote these $G$-orbits by $\Ob_{\xi}, \Ob_\mi$, and $\Ob_w$, respectively.)

\begin{proof}
The bijection $G \backslash (\sB\times \sB) \longleftrightarrow W$ is standard. The bijection $G \backslash (\sF\times \sB) \longleftrightarrow X_{\ff}$ follows from composing the bijections $G \backslash (G/P_{\gamma}\times \sB) \longleftrightarrow \D_{\gamma} \longleftrightarrow X_\gamma$, for all $\gamma \in \Lambda_{\ff}$. By definition of $\Xi$ from \eqref{eq:Xi}, we have $\Xi \cong \sqcup_{\gamma, \nu \in \Lambda_{\ff}} \D_{\gamma\nu}$. Now the bijection $G \backslash (\sF\times \sF) \longleftrightarrow \Xi$ follows from the bijections $G \backslash (G/P_{\gamma}\times G/P_{\nu}) \longleftrightarrow \D_{\gamma\nu}$, for all $\gamma, \nu \in \Lambda_{\ff}$.
\end{proof}

We define
$$
\Scg =\Scg (X_\ff) := \A_{G}(\sF\times \sF), \qquad \T_{\ff}' :=\A_{G}(\sF\times \sB), \qquad \HH' :=\A_{G}(\sB\times \sB)
$$
to be the space of $G$-invariant $\A$-valued functions on $\sF \times \sF$, $\sF \times \sB$, and $\sB \times \sB$ respectively.
(Note by Lemma~\ref{lem:bijectionorbits} that the parametrizations of the $G$-orbits
are independent of the finite fields $\mathbb F_\q$.)
For $\xi \in \Xi$ (and $\mi\in X_\ff$, $w\in W$, respectively), we denote by $\chi_\xi$ (and $v'_\mi$, $H'_w$, respectively) the characteristic function of the orbit $\Ob_\xi$ (and $\Ob_\mi$, $\Ob_w$, respectively).
Then $\Scg$ is a free $\A$-module with a basis $\{\chi_\xi \mid \xi \in \Xi\}$. Similarly, $\T_{\ff}'$ and $\HH'$ are
free $\A$-modules with bases parameterized by $X_{\ff}$ and $W$, respectively.

We define a convolution product $*$ on $\Scg$ as follows.
For a triple $(\xi,\xi',\xi'')$ in $\Xi \times \Xi \times \Xi$,
we fix $(f_1, f_2) \in \Ob_{\xi''}$, and  let $\kappa_{\xi,\xi',\xi'';\q}$ be the number of $f\in \sF$ such that $(f_1, f) \in \Ob_{\xi}$ and $(f, f_2) \in \Ob_{\xi'}$.
A well-known property of the Iwahori-Hecke algebra implies that there exists a polynomial $\kappa_{\xi,\xi',\xi''} \in \Z[q^2]$ such that
$\kappa_{\xi,\xi',\xi'';\q} =\kappa_{\xi,\xi',\xi''}|_{q^{-2}=\q}$ for all prime powers $\q=p^r$ with primes $p>3$. We define the convolution product  on $\Scg$ by letting
\begin{equation}  \label{eq:conv}
\chi_\xi * \chi_{\xi'} =\sum_{\xi''} \kappa_{\xi,\xi',\xi''} \chi_{\xi''}.
\end{equation}
Equipped with the convolution product, the $\A$-module $\Scg$ becomes an associative $\A$-algebra.

According to Iwahori (cf. \cite[\S67]{CR87}), an analogous convolution product gives us an $\A$-algebra structure on
$\HH'$, which is identified with the Iwahori-Hecke algebra $\HH$ associated to $W$.

\subsection{A geometric setting for $q$-Schur algebra and duality}

A convolution product analogous to \eqref{eq:conv} for $\Scg$ by regarding  $(\xi,\xi',\xi'') \in \Xi \times X_{\ff} \times X_{\ff}$ gives us
a left $\Scg$-action on $\T_{\ff}'$; a suitably modified convolution gives us a right $\HH'$-action on $\T_{\ff}'$. These two actions commute and hence we have obtained an $(\Scg, \HH')$-bimodule structure on $\T_{\ff}'$.

Recall $\phi_{\gamma\nu}^g\in \Sc_{q}$ from \eqref{eq:phig}.
\begin{thm}
 \label{thm:same duality}
A geometric interpretation of the $q$-Schur duality \eqref{eq:duality} is provided by the following commutative diagram:
\begin{eqnarray}
   \label{CD}
\begin{array}{ccccc}
  \Scg &\circlearrowright
 & \T_{\ff}' & \circlearrowleft  \; & \HH'
  \\
  \downarrow\;\; & & \downarrow & & \parallel
 \\
\Sc_{q}     & \circlearrowright
& \T_{\ff} &  \circlearrowleft  \; & \HH
\end{array}
 \end{eqnarray}
Here the identifications are given by
\begin{align*}
 \chi_\xi & \mapsto  \phi^g_{\gamma\nu}, \; \quad\text{ for }\xi =(\gamma, g, \nu) \in \Xi,
\\
v_\mi' & \mapsto q^{-\ell(\sigma)} v_\mi, \quad \text{ for } \mi=\bu_\nu \sigma \text{ with } \sigma \in \D_\nu,
\\
H_w' &\mapsto q^{-\ell(w)} H_w, \quad \text{ for } w \in W.
\end{align*}
 \end{thm}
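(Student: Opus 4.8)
The plan is to verify that the three vertical maps in \eqref{CD} are $\A$-module isomorphisms and that they intertwine the algebra structures and the bimodule actions; combined with Theorem~\ref{thm:double} this will give the geometric realization. First I would check that $H_w' \mapsto q^{-\ell(w)} H_w$ is an algebra isomorphism $\HH' \cong \HH$: this is precisely the classical Iwahori presentation of the Hecke algebra via $G$-orbits on $\sB\times\sB$ (cf. \cite[\S67]{CR87}), once one matches the normalization, i.e. the rescaling by $q^{-\ell(w)}$ is exactly what turns the geometric quadratic relation into $(H_i-q^{-1})(H_i+q)=0$. Next I would define the map $\Scg \to \End_\A(\T_\ff')$ by the left convolution action and observe that, since this action commutes with the right $\HH'$-action, its image lands in $\End_{\HH'}(\T_\ff')$; transporting along the already-established $\HH'\cong\HH$ and the $\T_\ff'\cong\T_\ff$ identification (to be proven), this matches $\End_\HH(\T_\ff)=\Sc_q$.

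The key computational step is to show that the vertical map on the middle column, $v_\mi' \mapsto q^{-\ell(\sigma)} v_\mi$ (for $\mi = \bu_\nu\sigma$, $\sigma\in\D_\nu$), is an $\HH'$- (equivalently $\HH$-) module isomorphism. Here I would use the bijection $G\backslash(\sF\times\sB)\leftrightarrow \XX_\ff$ from Lemma~\ref{lem:bijectionorbits}, which already refines through $G\backslash(G/P_\gamma\times\sB)\leftrightarrow \D_\gamma\leftrightarrow\XX_\gamma$. The right convolution action of the geometric Hecke generators on the orbit characteristic functions $v_\mi'$ is governed by the standard incidence geometry of $G/P_\gamma$ versus $\sB$: multiplying by the generator indexed by $s_k$ sends $v'_{\bu_\gamma\sigma}$ to $v'_{\bu_\gamma\sigma s_k}$ when $\ell(\sigma s_k)=\ell(\sigma)+1$, to $\q\, v'_{\bu_\gamma\sigma} + (\q-1)\cdots$ type expression when $\ell(\sigma s_k)=\ell(\sigma)-1$, and to a $\q$-multiple of itself when $\sigma s_k$ lies in the same coset $W_\gamma\sigma$. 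After the rescaling $v_\mi'\mapsto q^{-\ell(\sigma)}v_\mi$ and the substitution $q^{-2}=\q$, these become exactly the three cases of \eqref{Haction} (using $\mi s_k \succ \mi \Leftrightarrow \ell(\sigma s_k)=\ell(\sigma)+1$ for $\sigma\in\D_\gamma$, and that $\mi s_k=\mi$ corresponds to $\sigma s_k\in W_\gamma\sigma$). So the rescaled map is $\HH$-linear, and since it sends a basis to a basis it is an isomorphism.

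Then I would identify the left $\Scg$-action on $\T_\ff'$ with the $\Sc_q$-action on $\T_\ff$ under the correspondence $\chi_\xi\mapsto\phi^g_{\gamma\nu}$ for $\xi=(\gamma,g,\nu)$. It suffices to compute $\chi_{(\gamma,g,\nu)}*v'_{\bu_\nu}$ geometrically: this counts flags $f\in G/P_\gamma$ in relative position $g$ with a fixed point of $G/P_\nu$ sitting in relative position $\bu_\nu$ (i.e. the $\D_\nu$-minimal position) with a fixed Borel, which unwinds to the sum over $\mathcal{C}(\gamma,g,\nu)$ appearing in \eqref{actionphi}, up to the normalizing $q$-powers dictated by the rescalings of $v'$ and the definition \eqref{eq:phig} of $\phi^g_{\gamma\nu}$. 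Matching these $q$-powers is bookkeeping: the $q^{\ell(w_\circ^\nu)}$ in \eqref{eq:phig}, the $q^{-\ell(\sigma)}$ in the $v'$-rescaling, and the lengths $\ell(w)$ for $w\in\mathcal C(\gamma,g,\nu)$ must combine correctly, and I would verify it on the generator $v'_{\bu_\nu}$ and then extend by $\HH$-linearity using the commuting actions, exactly as in the proof of Proposition~\ref{prop:basis}. Since $\chi_\xi\mapsto\phi^g_{\gamma\nu}$ is then a bijection of bases, it is an $\A$-module isomorphism, and it is automatically an algebra isomorphism because both sides act faithfully on their respective modules (the right $\HH$-action on $\T_\ff$ is faithful by Theorem~\ref{thm:double}) and the actions are intertwined. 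Finally, the diagram \eqref{CD} commutes by construction of all three maps.

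\textbf{Main obstacle.} The genuine content, as opposed to formal transport of structure, is the second paragraph: pinning down the three-case convolution formula for the geometric right $\HH'$-action on $\T_\ff'$ and checking that the length-based trichotomy for $\sigma\in\D_\gamma$ acted on by $s_k$ matches the $\preceq$-trichotomy in \eqref{Haction}, together with getting every power of $q$ (equivalently $\q$) to land correctly after the normalizations. I expect no conceptual difficulty — it is the standard $BN$-pair incidence calculation in $G/P_\gamma$ — but it is where all the sign/exponent conventions of the paper (the factor $q^{\ell(w_\circ^\gamma)}$ in $x_\gamma$, the $q^{-\ell(w)}$ in $H_Y$, the $q^{-2}=\q$ substitution) have to be simultaneously consistent, so I would carry it out carefully rather than assert it.
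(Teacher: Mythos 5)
Your proposal is correct in substance, but it is organized differently from the paper's own argument, which is worth spelling out. The paper proves the algebra isomorphism $\Theta:\Scg\to\Sc_q$ first, by declaring the verification ``a verbatim repetition'' of the type $A$ proof of \cite[Theorem 13.15]{DDPW} (its inputs being Lemma~\ref{lem:bijectionorbits}, Iwahori's realization $\HH'=\HH$, and Bruhat decomposition/$BN$-pair facts), and then treats the two bimodule compatibilities as \emph{simpler variants} of that isomorphism, using the regular-orbit assumption so that $\sB\subset\sF$ and $\sF\times\sB\subset\sF\times\sF$ make $\T_\ff'$ and $\HH'$ into ``corners'' of the convolution picture; the right Hecke compatibility is explicitly skipped as standard. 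You reverse the logical order: you first establish that $v'_\mi\mapsto q^{-\ell(\sigma)}v_\mi$ is $\HH$-linear by the parabolic-quotient trichotomy for $\sigma\in\D_\gamma$ (which indeed matches the three cases of \eqref{Haction} after the $q^{-2}=\q$ substitution), then match $\chi_\xi\ast v'_{\bu_\nu}$ with $\phi^g_{\gamma\nu}(v_{\bu_\nu})$ as in \eqref{actionphi} and extend by the commuting right $\HH$-action exactly as in the proof of Proposition~\ref{prop:basis}, and finally deduce multiplicativity of $\Theta$ formally from the intertwining. This buys a self-contained argument that never re-derives the DDPW structure-constant comparison, at the price of having to carry out the $BN$-pair convolution bookkeeping explicitly — which is precisely the content the paper outsources to \cite{DDPW}. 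One small repair: the faithfulness you invoke at the end is misattributed. What makes the intertwining imply $\Theta(ab)=\Theta(a)\Theta(b)$ is simply that elements of $\Sc_q\subseteq\End_\A(\T_\ff)$ are determined by their action on $\T_\ff$ (together with the left-module axiom $(a\ast b)\ast m=a\ast(b\ast m)$ for the convolution action on $\T'_\ff$); the faithfulness of the \emph{right} $\HH$-action from Theorem~\ref{thm:double} is not the relevant fact and is not needed here.
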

(The $q$-powers in the identifications above are resulted from our different conventions for various basis elements involved above.)

\begin{proof}
We shall verify that the map $\Theta: \Scg \longrightarrow \Sc_q$, $\chi_\xi\mapsto \phi^g_{\gamma\nu}$, is an algebra isomorphism.
To that end, we note that
\[
\Sc_q'|_{q^{-2}=\q} =\text{End}_G \left( \bigoplus_{\gamma \in \Lambda_\ff} \Ind_{P_\gamma}^G \Z \right )^{\text{op}}.
\]
This follows from the standard results on permutation modules; cf. \cite{CR87} or \cite[Exercise~13.3]{DDPW}.

The argument for verifying $\Theta$ is an algebra isomorphism is a verbatim repetition of the proof of \cite[Theorem 13.15]{DDPW} (though it was assumed in the type A setting therein). The key ingredients used in the argument include the first bijection in Lemma~\ref{lem:bijectionorbits}, Iwahori's geometric realization of Hecke algebra (i.e. $\HH'=\HH$), the Bruhat decomposition (and/or basics of $BN$-pairs). We refer to {\em loc. cit.} for the details.

Recall by assumption the $W$-invariant set $X_\ff$ contains a regular orbit.  The compatibility of $\T_\ff' \rightarrow \T_\ff$ with the left algebra actions can be viewed as some (simper) variation of the above algebra isomorphism $\Theta$. (On the geometric level, we have $\sB \subset \sF$ and $\sF\times \sB \subset \sF\times \sF$.)

The compatibility of the right Hecke algebra actions is standard and will be skipped.
Alternatively, as the $q$-Schur algebra contains the Hecke algebra as a subalgebra (by our assumption on $X_\ff$), the compatibility of the right algebra actions can be viewed as a (simpler) variant of the compatibility of the Schur algebra actions.
\end{proof}

\subsection{A canonical basis}
\label{subset:CBonS}

For $\xi =(\gamma, g, \nu) \in \Xi$, we denote by $d(\xi)$ the dimension of the $G$-orbit $\Ob_\xi$. Note that the dimension $d(\xi^{\tiny \Delta})$ of the $G$-orbit $\Ob_{\xi^{\Delta}}$, where $\xi^{\Delta} =(\gamma,\id,\gamma)$, is simply the dimension of $G/P_\gamma$.
Define
\[
[\xi] =q^{d(\xi)-d(\xi^{\tiny \Delta})} \chi_\xi.
\]
Then $\{[\xi] \mid \xi \in \Xi\}$ forms a basis for $\Scg$ (called a standard basis).

\begin{lem}
\label{lem:d=d}
Retain the notations above. Then we have
\begin{equation}  \label{eq:dim}
d(\xi)- d(\xi^{\tiny \Delta}) = \ell(g_{\gamma\nu}^+)-\ell(w_\circ^\nu).
\end{equation}
\end{lem}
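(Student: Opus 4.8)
The plan is to compute both sides of \eqref{eq:dim} explicitly in terms of orbit dimensions on products of partial flag varieties over $\F_\q$, and to match them via the standard length formula for minimal double-coset representatives. First I would recall the basic dimension count: for a parabolic $P_\gamma = P_{J_\gamma}$, one has $\dim G/P_\gamma = \ell(w_\circ^{W}) - \ell(w_\circ^\gamma)$ (working with $\dim$ meaning the exponent governing the $\q$-power count of the number of $\F_\q$-points, i.e. treating these as varieties over $\F_\q$ and counting via the Bruhat decomposition). Hence $d(\xi^{\Delta}) = \dim G/P_\gamma = \ell(w_\circ^W) - \ell(w_\circ^\gamma)$.

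Next I would compute $d(\xi)$ for $\xi = (\gamma, g, \nu)$. The $G$-orbit $\Ob_\xi \subset G/P_\gamma \times G/P_\nu$ corresponding to the double coset $W_\gamma g W_\nu$ has dimension equal to $\dim G/P_\gamma + \dim(P_\gamma\text{-orbit on }G/P_\nu\text{ indexed by }g)$. The standard fact here (see e.g. the $BN$-pair combinatorics, or \cite{Cur85, CR87}) is that the $P_\gamma$-orbit on $G/P_\nu$ indexed by the minimal double-coset representative $g \in \D_{\gamma\nu}$ has dimension $\ell(g_{\gamma\nu}^+) - \ell(w_\circ^\nu)$, where $g_{\gamma\nu}^+$ is the longest element of $W_\gamma g W_\nu$. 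Actually it is cleaner to invoke the cell decomposition of $G/P_\gamma \times G/P_\nu$ into $G$-orbits indexed by $\D_{\gamma\nu}$ and recall that the orbit for $g$ has codimension (in the full product) equal to $\ell(w_\circ^W) - \ell(g_{\gamma\nu}^+)$; equivalently its dimension is $\dim(G/P_\gamma \times G/P_\nu) - (\ell(w_\circ^W) - \ell(g_{\gamma\nu}^+))$. Expanding $\dim(G/P_\gamma\times G/P_\nu) = (\ell(w_\circ^W)-\ell(w_\circ^\gamma)) + (\ell(w_\circ^W)-\ell(w_\circ^\nu))$, I get
\[
d(\xi) = 2\ell(w_\circ^W) - \ell(w_\circ^\gamma) - \ell(w_\circ^\nu) - \ell(w_\circ^W) + \ell(g_{\gamma\nu}^+) = \ell(w_\circ^W) - \ell(w_\circ^\gamma) - \ell(w_\circ^\nu) + \ell(g_{\gamma\nu}^+).
\]
Subtracting $d(\xi^\Delta) = \ell(w_\circ^W) - \ell(w_\circ^\gamma)$ then yields $d(\xi) - d(\xi^\Delta) = \ell(g_{\gamma\nu}^+) - \ell(w_\circ^\nu)$, which is exactly \eqref{eq:dim}.

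The main obstacle — really the only nontrivial input — is justifying the codimension formula for the $G$-orbit $\Ob_\xi$ in $G/P_\gamma \times G/P_\nu$, i.e. that it equals $\ell(w_\circ^W) - \ell(g_{\gamma\nu}^+)$ with $g_{\gamma\nu}^+$ the longest element of the double coset. This is the geometric shadow of Lemma~\ref{eq:xandC}(1), which identifies $W_\gamma g W_\nu = \{w : g \le w \le g_{\gamma\nu}^+\}$ as a Bruhat interval; the point is that under the projection $G/B \times G/B \to G/P_\gamma \times G/P_\nu$ the fiber over a point of $\Ob_\xi$ meets the $G$-orbits $\Ob_w$ for exactly the $w$ in this interval, and the generic (open dense) such cell is the one for $w = g_{\gamma\nu}^+$, whose dimension in $G/B\times G/B$ is $\ell(w_\circ^W) + \ell(g_{\gamma\nu}^+)$. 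I would either cite \cite{Cur85} (as in the proof of Lemma~\ref{eq:xandC}) for this counting of points in double cosets, or spell out the short $\q$-counting argument: $|\Ob_\xi(\F_\q)|$ is, up to a power of $\q$ accounting for $\dim G/P_\gamma$, equal to $\sum_{w \in \mathcal C(\gamma,g,\nu)} \q^{\ell(w)}$ times the overcount from $W_\nu$, whose top-degree term is $\q^{\ell(g_{\gamma\nu}^+) - \ell(w_\circ^\nu) + \dim G/P_\gamma}$; reading off the leading exponent gives $d(\xi)$. Everything else is bookkeeping with lengths of longest elements of parabolic subgroups, which is routine.
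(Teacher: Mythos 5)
Your argument is correct and follows essentially the same route as the paper: both reduce the claim to the orbit-dimension formula $d(\xi)=\ell(g_{\gamma\nu}^+)+\ell(w_\circ)-\ell(w_\circ^\gamma)-\ell(w_\circ^\nu)$ together with $d(\xi^{\Delta})=\dim G/P_\gamma=\ell(w_\circ)-\ell(w_\circ^\gamma)$, and then subtract. The only difference is that the paper obtains the first formula by citing \cite[Lemma A.2]{Du92} (noting its proof is valid in any finite type), whereas you re-derive it via the standard Bruhat-decomposition/fibration count through $G/B\times G/B$, which is precisely the content of that cited lemma.
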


\begin{proof}
By \cite[Lemma A.2]{Du92} (stated there for type A) and its proof, which is indeed valid for any finite type, we have
\begin{equation}  \label{eq:dim}
d(\xi)  = \ell(g_{\gamma\nu}^+)+\ell(w_\circ)-\ell(w_\circ^\la)-\ell(w_\circ^\nu),
\end{equation}
where $w_\circ$ is the unique longest element in $W$.
Applying \eqref{eq:dim} again to the orbit $\Ob_{\xi^{\Delta}}$ gives us
\begin{equation} \label{eq:dimD}
d(\xi^{\Delta})  = \ell(w_\circ)-\ell(w_\circ^\la).
\end{equation}
Now the lemma follows from \eqref{eq:dim}--\eqref{eq:dimD}.
\end{proof}
Recall from \eqref{def:dA} and Lemma~\ref{lem:d=d} that
$[\phi_{\gamma\nu}^g]=q^{\ell(g_{\gamma\nu}^+)-\ell(w_\circ^\nu)}\phi_{\gamma\nu}^g.$
It follows by Theorem~\ref{thm:same duality} that
\begin{equation}  \label{eq:samestd}
\Theta ([\xi])=[\phi^g_{\gamma,\nu}].
\end{equation}

Let $\text{IC}_\xi$, for $\xi\in \Xi$, be  the shifted intersection complex associated with the closure of  the orbit $\Ob_\xi$
such that the restriction of $\text{IC}_\xi$ to $\Ob_\xi$ is the constant sheaf on  $\Ob_\xi$.
Since $\text{IC}_\xi$ is $G$-equivariant, the stalks of the $i$-th cohomology sheaf of $\text{IC}_\xi$  at different points in $\Ob_{\xi'}$ (for $\xi' \in \Xi$) are isomorphic.
Let $\mathscr H^i_{\Ob_{\xi'}} (\text{IC}_\xi)$ denote the stalk of the $i$-th cohomology group of $\text{IC}_\xi$ at any point in $\Ob_{\xi'}$.
We  set
\begin{align}
 \label{eq:A}
\begin{split}
P_{\xi', \xi} &=\sum_{i\in \Z} \dim \mathscr H^i_{\Ob_{\xi'}} (\text{IC}_\xi) \; q^{-i+d(\xi) -d(\xi')},
 \\
\{ \xi\} &= \sum_{\xi'\leq \xi} P_{\xi', \xi} [\xi'],
\end{split}
\end{align}
where the partial order ``$<$'' on $\Xi$ is a standard Bruhat order, or equivalently, the orbit closure order. Precisely, for $\xi'=(\gamma', g', \nu')$ and $\xi=(\gamma, g, \nu)$,
$$\xi'< \xi \quad \Leftrightarrow \quad \gamma'=\gamma,\nu'=\nu, g'<g.$$

By the properties of intersection complexes, we have
\begin{equation}
 \label{eq:v-}
P_{\xi, \xi} =1, \qquad P_{\xi', \xi} \in q \N [q] \; \; \mbox{ for }  \; \xi' < \xi.
\end{equation}
As in \cite[1.4]{BLM90}, we have an anti-linear bar involution $\bar\ : \Scg \to \Scg$ such that
\[
\overline{\{\xi\}} =\{\xi\}, \quad \forall \xi\in \Xi.
\]
In particular, we have
\[
\overline{[\xi]} =\sum_{\xi'\leq \xi} c_{\xi', \xi} [\xi'], \quad \mbox{where}\; c_{\xi, \xi} =1, \; c_{\xi', \xi}\in \Z[q,q^{-1}].
\]
Then $\B (\Scg) := \{ \{\xi\} \mid \xi\in \Xi\}$ forms an $\A$-basis for $\Scg$, called a {\em canonical basis}.
There is a similar bar involution on $\T_\ff'$, and the bar maps are compatible with the commuting actions of $(\Scg, \HH')$ on $\T_\ff'$.

\begin{prop}
  \label{prop:CB=CB}
The isomorphism $\Theta :\Scg \rightarrow \Sc_{q}$ matches the canonical bases $\B (\Scg)$ and $\B (\Sc_{q})$.
\end{prop}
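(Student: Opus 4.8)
The strategy is to show that both canonical bases are characterized by the same two properties, and then transport one characterization across the isomorphism $\Theta$. Recall that $\B(\Sc_q)$ is, by Theorem~\ref{thm:CB-Sjj}, the unique basis satisfying bar-invariance \eqref{barA} together with the unitriangularity \eqref{eq:canonical} with respect to the standard basis $\{[\phi_{\gamma\nu}^g]\}$ ordered by the Bruhat order on $\D_{\gamma\nu}$; on the geometric side, $\B(\Scg)$ is by construction the unique basis satisfying $\overline{\{\xi\}}=\{\xi\}$ and $\{\xi\} \in [\phi_\xi] + \sum_{\xi' < \xi} q\N[q]\,[\phi_{\xi'}]$, using \eqref{eq:A} and \eqref{eq:v-}. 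So it suffices to verify that $\Theta$ intertwines the two bar involutions and the two standard bases.

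\textbf{Key steps.} First, I would record that $\Theta$ matches standard bases: this is precisely \eqref{eq:samestd}, namely $\Theta([\phi^g_{\gamma\nu}]) = [\chi_\xi]$ for $\xi = (\gamma, g, \nu)$, which follows from Theorem~\ref{thm:same duality} together with Lemma~\ref{lem:d=d}. Second, I would check that $\Theta$ intertwines the bar maps, $\Theta \circ \overline{\phantom{x}} = \overline{\phantom{x}} \circ \Theta$. This is the crux of the argument. One clean way is to note that both bar involutions are anti-linear ring (or at least module-)maps on the respective algebras, both fix the element corresponding to the identity orbit (i.e.\ the idempotent-like $[\chi_{\xi^\Delta}]$, equivalently $x_\gamma$), and both are compatible with the commuting $\HH$-action on the bimodule $\T'_\ff \cong \T_\ff$ — which is stated at the end of \S\ref{subset:CBonS} on the geometric side and established in the preceding Proposition on the algebraic side. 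Since $\T_\ff$ is generated as an $\HH$-module by the anti-dominant vectors $v_{\bu_\gamma}$ (equivalently the $x_\gamma$), and the bar map on $\Sc_q$ was \emph{defined} in \eqref{eq:barS} via its action on $x_\nu = C_{w^\nu_\circ}$, matching the bar maps reduces to checking that $\Theta(\bar f)$ and $\overline{\Theta(f)}$ agree after evaluation on all $x_\nu$; and this in turn follows from compatibility of both bar maps with the (identical, by Theorem~\ref{thm:same duality}) bar involution on $\T_\ff \cong \T'_\ff$. Third, with bar maps and standard bases matched, the two triangular characterizations coincide: $\Theta(\{\phi^g_{\gamma\nu}\})$ is bar-invariant and lies in $[\chi_\xi] + \sum_{\xi' < \xi} q\Z[q][\chi_{\xi'}]$ — so by uniqueness of the geometric canonical basis it equals $\{\xi\}$. (One should also confirm the two Bruhat-type orders on $\Xi$ agree; this is immediate since $\xi' < \xi$ within a fixed $(\gamma,\nu)$-block means $g' < g$ in $\D_{\gamma\nu}$, matching the order used in \eqref{eq:canonical}, and orbit closure order refines to this by Lemma~\ref{eq:xandC}(1).)

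\textbf{Main obstacle.} The delicate point is the compatibility $\Theta \circ \overline{\phantom{x}} = \overline{\phantom{x}} \circ \Theta$ of bar involutions, since the two bar maps are defined in genuinely different ways — algebraically via \eqref{eq:barS} in terms of Kazhdan--Lusztig basis elements $C_{w^\nu_\circ}$, and geometrically via Verdier duality / Poincaré duality of intersection complexes as in \cite{BLM90}. The cleanest route, as sketched above, is to avoid comparing the two definitions head-on and instead pin both down by their action on the generating set $\{x_\gamma = v_{\bu_\gamma}\}$ of the faithful bimodule $\T_\ff \cong \T'_\ff$, invoking the already-established compatibility of each bar map with the common bar involution on the bimodule (and the fact that the right $\HH$-action is faithful, from Theorem~\ref{thm:double}). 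Once that is in hand the rest is formal bookkeeping with the triangularity statements, so I would keep the write-up of those parts brief.
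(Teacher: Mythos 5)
Your proposal is correct, and its skeleton is the same as the paper's: match the standard bases via $\Theta ( [\phi^g_{\gamma\nu}]) = [\chi_\xi]$ (Theorem~\ref{thm:same duality} plus Lemma~\ref{lem:d=d}), then invoke the uniqueness of a bar-invariant element in the coset $[\phi_\xi]+\sum_{\xi'} q\Z[q][\phi_{\xi'}]$. The difference is one of completeness: the paper's proof is a single sentence and leaves the intertwining $\Theta\circ\overline{\phantom{x}}=\overline{\phantom{x}}\circ\Theta$ implicit, whereas you correctly identify this as the crux and supply an argument — evaluate on the generators $x_\nu$, using that each bar map is compatible with the commuting actions on the bimodule $\T_\ff\cong\T'_\ff$ and that elements of $\Hom_\HH(x_\nu\HH,x_\gamma\HH)$ are determined by their value on $x_\nu$. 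This extra input is genuinely needed (standard-basis matching alone cannot distinguish two bases that are each unitriangular with $q\Z[q]$ coefficients relative to different involutions), so making it explicit strengthens rather than duplicates the paper's argument. One point to tighten: you assert the bar involutions on $\T_\ff$ and $\T'_\ff$ are ``identical, by Theorem~\ref{thm:same duality}'', but that theorem only identifies the bimodules; the paper merely states that there \emph{is} a similar bar involution on $\T'_\ff$. The identification of the two bars needs its own short justification — e.g., both are anti-linear, compatible with the bar on $\HH$ through the right action, and fix the generators $v_{\bu_\gamma}$ (on the geometric side because the corresponding orbits are closed, so the IC sheaf is the constant sheaf), and $\T_\ff=\sum_\gamma v_{\bu_\gamma}\HH$ — which is exactly the style of argument you sketch, so the repair is immediate. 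Finally, your aside about comparing the Bruhat order on $\D_{\gamma\nu}$ with the orbit-closure order is harmless but unnecessary: as the paper notes, the uniqueness statement requires no partial-ordering condition on $\xi'$.
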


\begin{proof}
This follows from the identification of standard bases under $\Theta$, and the uniqueness of the canonical basis via the bar invariance and that $\{\xi\} \in [\xi] +\sum_{\xi'} q\Z[q] [\xi']$. Note the uniqueness does not require any partial ordering condition on $\xi'$.
\end{proof}

Recall the canonical bases $\B (\Sc_{q})$ and $\B (\T_{\ff})$ for the $\A$-algebra $\Sc_{q}$ and its module $\T_{\ff}$, respectively.
\begin{thm} [Positivity]
For any $c \in \B (\T_{\ff})$ and $b, b' \in \B (\Sc_{q})$, we write
\[
b b' =\sum_{b''\in \B (\Sc_{q}) } m_{b,b'}^{b''} b'', \qquad
b\cdot c =\sum_{c''\in \B (\T_{\ff}) } t_{b,c}^{c''} c'', \quad \text{ for } m_{b,b'}^{b''}, t_{b,c}^{c''}  \in \A.
\]
Then we must have
$m_{b,b'}^{b''}  \in \N[q,q^{-1}]$, and
$t_{b,c}^{c''}  \in \N[q,q^{-1}]. $
\end{thm}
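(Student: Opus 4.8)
The plan is to deduce positivity from the geometric realization, exactly in the spirit of \cite{BLM90, GL92}. By Proposition~\ref{prop:CB=CB}, the isomorphism $\Theta$ matches $\B(\Sc_q)$ with $\B(\Scg)$, and there is a similar bar-compatible identification of $\B(\T_\ff)$ with a canonical basis $\B(\T_\ff')$ of $\T_\ff'$. Thus it suffices to prove positivity of the structure constants for the convolution products $\B(\Scg)\times \B(\Scg) \to \B(\Scg)$ and $\B(\Scg)\times \B(\T_\ff') \to \B(\T_\ff')$. First I would reinterpret both products sheaf-theoretically. The element $\{\xi\}$ is, by \eqref{eq:A}, the function whose value on the orbit $\Ob_{\xi'}$ records the Poincar\'e polynomial of the stalks of $\mathrm{IC}_\xi$; equivalently, under the function--sheaf dictionary, $\{\xi\}$ is the ``trace of Frobenius'' class of the semisimple $G$-equivariant complex $\mathrm{IC}_\xi$ on $\sF\times\sF$. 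The convolution product on functions is the shadow of the geometric convolution of complexes via the correspondence $\sF\times\sF \leftarrow \sF\times\sF\times\sF \to \sF\times\sF$ (projecting off the middle, resp. first/last factor).

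The key step is then the \emph{decomposition theorem} (Beilinson--Bernstein--Deligne--Gabber): the proper pushforward of $\mathrm{IC}_\xi \boxtimes \mathrm{IC}_{\xi'}$ (suitably shifted) along the convolution map is a direct sum of shifted simple perverse sheaves, i.e. of shifts of various $\mathrm{IC}_{\xi''}$, with multiplicities that are themselves Poincar\'e polynomials of certain cohomology groups --- in particular elements of $\N[q,q^{-1}]$. Passing to Frobenius traces, this says precisely that $\{\xi\}*\{\xi'\} = \sum_{\xi''} m^{\xi''}_{\xi,\xi'}\{\xi''\}$ with $m^{\xi''}_{\xi,\xi'}\in \N[q,q^{-1}]$. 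The argument for $b\cdot c$ is identical, using the parabolic-to-complete correspondence $\sF \times \sB \leftarrow \sF\times\sF\times\sB \to \sF\times\sB$ and the inclusion $\sB\subset\sF$; here the relevant IC sheaves on $\sF\times\sB$ give $\B(\T_\ff')$, and the decomposition theorem again yields coefficients in $\N[q,q^{-1}]$. To make this rigorous one needs the standard facts that $G$-equivariant IC sheaves on these flag-type varieties are pointwise pure (Deligne's purity, together with the fact that $G$-orbit closures in products of partial flag varieties admit compatible affine stratifications, as in the Bruhat/$BN$-pair setup used already in Lemma~\ref{lem:bijectionorbits} and Theorem~\ref{thm:same duality}), so that the odd stalk cohomology vanishes and the Poincar\'e polynomials genuinely lie in $\N[q]$ rather than merely $\N[q,q^{-1}]$ a priori; the global shifts then move us into $\N[q,q^{-1}]$.

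I expect the main obstacle to be bookkeeping rather than conceptual: one must check that the geometric convolution on the disjoint union $\sF=\bigsqcup_{\gamma}G/P_\gamma$ (rather than a single partial flag variety) behaves well --- the convolution maps are proper on each piece, the strata are the $G$-orbits $\Ob_\xi$ classified in Lemma~\ref{lem:bijectionorbits}, and these strata are affine spaces over the base, so purity applies uniformly. One also has to confirm that the normalization conventions --- the shifts $d(\xi)-d(\xi^\Delta)$ entering $[\chi_\xi]$ in \S\ref{subset:CBonS} and the $q$-power twists in Theorem~\ref{thm:same duality} --- combine so that the resulting Laurent polynomials carry no sign changes; this is where Lemma~\ref{lem:d=d} and \eqref{eq:samestd} are used. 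Finally one invokes Proposition~\ref{prop:CB=CB} and its $\T_\ff$-analogue to transport the positivity back to $\B(\Sc_q)$ and $\B(\T_\ff)$, completing the proof.
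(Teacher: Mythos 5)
Your proposal is correct and follows essentially the same route as the paper, whose proof is a one-sentence appeal to the geometric interpretation of the canonical bases and their multiplication/action via perverse sheaves and convolution; you have simply spelled out the standard details (function--sheaf dictionary, decomposition theorem, purity, and the transport through $\Theta$ via Proposition~\ref{prop:CB=CB}) that the paper leaves implicit.
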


\begin{proof}
This follows from the geometric interpretation of these canonical bases and their multiplication/action in terms of perverse sheaves and their convolution products.
\end{proof}

\begin{rem}
Two $W$-invariant subsets $X_\ff \subset \tilde X_\ff$ give rise to two $q$-Schur algebras $\Sc_q(X_\ff) \subset \Sc_q(\tilde X_\ff)$. By the geometric interpretation, their canonical bases are compatible, i.e., $\B (\Sc_{q}(X_\ff)) \subset \B (\Sc_{q}(\tilde X_\ff))$. Taking the limit, we obtain a canonical basis for the infinite-rank $q$-Schur algebras $\Sc_q(X) =\End_\HH (\T)$.
\end{rem}

\subsection{Earlier geometric $q$-Schur algebras of classical types}

We refer to Remark~\ref{rem:algebra:ABC} for the algebraic constructions of $q$-Schur algebras in the literature.

For classical types, the $q$-Schur algebras have been constructed geometrically in earlier works  (see \cite{BLM90} for type A, see \cite{BKLW18, FL15} and \cite[Appendix~A]{FLLLW} for type BCD), where $\sF$ is formulated naturally as the variety of ``$n$-step flags". As each classical type forms an infinite family (depending on the rank $d$), one further establishes some remarkable stability properties of the $q$-Schur algebras as $d$ goes to infinity and to realize a suitable ``quantum group" as a stablization limit of the family of $q$-Schur algebras.

Our geometric formulation of $q$-Schur algebras of arbitrary finite types is uniform, again starting with a $W$-invariant finite set $X_\ff$ of integral weights. For the particular choices of $X_\ff$ in classical type as specified in \S\ref{subsec:Pf}, we recover the earlier constructions {\em loc. cit.}.

\subsection{Connections to BGG category $\OO$}

Our general constructions of $q$-Schur algebras and dualities have been further motivated by their connections to the BGG Category $\OO$; see for example \cite{Br03, CLW15} for type A, and \cite{BW18, Bao17} and references therein for the connections in classical type. To this end, the module $\T_{\ff}$ over the Hecke algebra at $q=1$ can be identified as the Grothendieck group of a truncated category $\OO$, and its decomposition into a direct sum of permutation modules is interpreted as a decomposition into blocks. A generating set for the $q$-Schur algebra is expected to be provided by translation functors. In such a formulation, the standard basis for $\T_{\ff}$ corresponds to the Verma modules while the canonical basis for $\T_{\ff}$ corresponds to the tilting modules (as a slight reformulation of Kazhdan-Lusztig theory).

We shall make such a connection to BGG category $\OO$ precise in type $G_2$ in the next section.

\section{Category $\OO$ and the $q$-Schur algebras of type $G_2$}
   \label{sec:G2}

In this section, specializing to the type $G_2$, we present more precise results and formulate connections to the BGG category $\OO$.

\subsection{The basics for $G_2$}

Let $\mathfrak{g}$ be the Lie algebra of type $G_2$, and let $\mathfrak{h} \subset \mathfrak b$ be a Cartan and a Borel subalgebra of $\mathfrak{g}$.

We let $\C^3$ be equipped with a standard orthonormal basis $\{\del_1, \del_2, \del_3\}$.
We take $\h^*$ to be the orthogonal complement to $\del_1+\del_2+\del_3$, i.e.,
\begin{align}
\h^* &=\{a_1 \del_1 +a_2 \del_2 +a_3 \del_3 \mid a_1 +a_2 +a_3 =0\}.
\end{align}
We can take a simple system (of type $G_2$) $\Pi =\{\alpha_1, \alpha_2\}$,
where
\[
\al_1 = \del_1 -\del_2,
\qquad
\al_2 = -2 \del_1 +\del_2 +\del_3.
\]
The fundamental weights with respect to $\Pi$ are
\begin{align}
\label{fund}
 \omega_1 = -\del_2 +\del_3,
 \qquad
 \omega_2 =-\del_1-\del_2 +2\del_3.
\end{align}
The weight lattice is
\begin{align}
X 
&=\{a \del_1 +b\del_2 +c\del_3 \mid a+b+c=0, a,b,c\in \Z \}.
\end{align}
The set of anti-dominant integral weights is given by
\[
X^- =\{a \del_1 +b\del_2 +c\del_3 \in X \mid 0\leq a\leq b\}.
\]
To make the notation more concise, we use the following convention
\begin{align*}
X&= \{(a,b,c) \in \Z^3 \mid a+b+c=0\},
\\
X^-&= \{(a,b,c) \in \Z^3 \mid a+b+c=0, 0\leq a\leq b \},
\end{align*}
where $(a,b,c)$ means the weight $a \del_1 +b\del_2 +c\del_3$.

The Weyl group $W$ (of type $G_2$) is generated by $s_1$ and $s_2$ with relations
$s_1^2=s_2^2=\id, \, (s_1s_2)^3=(s_2s_1)^3.$
There is a natural action of $W$ on $\C^3$ given by
\begin{align}
\label{Waction}
  \begin{split}
s_1: & \; \del_1 \mapsto \del_2, \quad \del_2 \mapsto \del_1,  \quad \del_3 \mapsto \del_3,
\\
s_2: & \; \del_1 \mapsto -\del_1, \quad \del_2 \mapsto -\del_3,  \quad \del_3 \mapsto -\del_2.
  \end{split}
\end{align}
The action of $W$ on $\h^*$ is given by the restriction to $\h^*$ of the above action on $\C^3$.
The Bruhat graph of $W$ is defined as follows:
\begin{equation}
 \label{Bruhat}
\xymatrix@R=0em@C=1em{
& s_1  \ar@{->}[r] \ar@{->}[rdd] & s_1s_2 \ar@{->}[r] \ar@{->}[rdd] & s_1s_2s_1 \ar@{->}[r] \ar@{->}[rdd] & s_1s_2 s_1s_2\ar@{->}[r] \ar@{->}[rdd]  &   s_1s_2s_1s_2s_1 \ar@{->}[dr] &
  \\
\id \ar@{->}[ur]  \ar@{->}[dr] &&&&&& s_1s_2s_1s_2s_1s_2. \\
& s_2 \ar@{->}[r] \ar@{->}[ruu]  & s_2s_1 \ar@{->}[r] \ar@{->}[ruu]&s_2s_1s_2 \ar@{->}[r] \ar@{->}[ruu]&s_2s_1s_2s_1 \ar@{->}[r] \ar@{->}[ruu]&s_2s_1s_2s_1s_2 \ar@{->}[ru] &
}
\end{equation}

The Hecke algebra $\HH$ (of type $G_2$) is an $\A$-algebra generated by $H_1$ and $H_2$ with relations
\[
(H_1-q^{-1})(H_1+q)=(H_2-q^{-1})(H_2+q)=0; \quad (H_1H_2)^3=(H_2H_1)^3.
\]

\subsection{The ``$q$-Grothendieck groups''  of type $G_2$}

The linkage classes (i.e., the $W$-orbits) in $X$ are induced from the following equivalence relation $\sim$ on $X$: for $(a_1,b_1,c_1),(a_2,b_2,c_2)\in X$,
\begin{equation}\label{linkage}
(a_1,b_1,c_1)\sim(a_2,b_2,c_2)\quad\Leftrightarrow\quad\{|a_1|,|b_1|,|c_1|\}=\{|a_2|,|b_2|,|c_2|\}.
\end{equation}

Given $n \in \N$, set
\begin{align}
\label{eq:Qng2}
X_{n} &= \{ (a,b,c) \in X \mid (a,b,c)\sim(a',b',c'), \text{ for some } 0\leq a'\leq b'\leq n \},
\\
X_{n}^- &=X_{n}\cap X^-.
\notag
\end{align}

In each linkage class in $X$, there exists a unique anti-dominant element $(a,b,c)\in X^-$.
We introduce symbols $\varepsilon_a$, for $a\in \Z$, and call $\varepsilon_a+\varepsilon_b$ the \emph{$\varepsilon$-weight} of all elements in the class of $(a,b,c) \in X^-$.
Denote the set of $\varepsilon$-weights, for $n\in \N$, by
\begin{align}
  \label{eq:Lambdag2}
\Lambda &=\{\varepsilon_a+\varepsilon_b~|~a,b\in \Z,\; 0\leq a\leq b\},
\\
\Lambda_n &=\{\varepsilon_a+\varepsilon_b~|~a,b\in \Z, \; 0\leq a\leq b\leq n\},
\label{Lng2}
\\
\bu_\gamma & =\mbox{ the unique anti-dominant element in } X \mbox{ of $\varepsilon$-weight } \gamma \in \Lambda.
\label{eq:ibug2}
\end{align}
In other words, a linkage class in $X$ is the same as a set of elements in $X$ having the same $\varepsilon$-weight, and  there is a bijection  
\begin{equation}
  \label{eq:XnLn}
X_n^- \leftrightarrow \Lambda_n,
\text{ where } (a,b,c)\mapsto \varepsilon_a+\varepsilon_b
\text{ with inverse given by } \gamma \mapsto \bu_\gamma.
\end{equation}

The notations $\Lambda$ and $\Lambda_n$ in \eqref{eq:Lambdag2}--\eqref{Lng2} look seemingly different from
those in \eqref{eq:Lambda}--\eqref{Ln}, but they are essentially the same.

As in \eqref{eq:Tn} and \eqref{ZformT},
we introduce the following free $\A$-modules
\begin{equation}\label{eq:Tng2}
\T =\bigoplus_{\mi\in X} \A v_\mi,
\qquad
 \T_n =\bigoplus_{\mi\in X_n} \A v_\mi,
\end{equation}
and their $\Z$-forms
\begin{equation}\label{ZformTg2}
\T^1=\bigoplus_{\mi\in X}\Z v_\mi, \quad \quad \T_n^1=\bigoplus_{\mi\in X_n}\Z v_\mi.
\end{equation}

\begin{rem}
 \label{rem:dimTn}
The $W$-orbits on $X_n$ are classified as follows: ${n \choose 2}$ orbits are regular, $n$ orbits can be identified with $W/\langle s_1\rangle$, $n$ orbits can be identified with $W/\langle s_2\rangle$, and in addition, there is one singleton orbit.
This implies that
\begin{equation}  \label{dim:Tn}
\mbox{rank}_\A (\T_n) = |X_n| = 12 \cdot {n \choose 2} +6\cdot n +6 \cdot n + 1 =6n^2 +6n+1.
\end{equation}
\end{rem}

The natural right action of $W$  on $\C^3$ induces a right action of $W$ on $\T$ (and $\T_n$) by permuting the indices of the standard basis elements $v_\mi$.
Then the right action of the Hecke algebra $\HH$ on $\T$ (and $\T_n$) is given as follows:
\begin{align}
v_{(a,b,c)}H_1&=
\left\{\begin{array}{ll}
q^{-1}v_{(a,b,c)}, & a=b;\\
v_{(b,a,c)}, & a<b;\\
v_{(b,a,c)}+(q^{-1}-q)v_{(a,b,c)}, & a>b ;
\end{array}\right.\label{H1action}
\\
v_{(a,b,c)}H_2&=\left\{\begin{array}{ll} q^{-1}v_{(a,b,c)}, & a=0;\\
v_{(-a,-c,-b)}, & a>0;\\
v_{(-a,-c,-b)}+(q^{-1}-q)v_{(a,b,c)}, & a<0.
\end{array}\right.\label{H2action}
\end{align}

\subsection{The $q$-Schur algebra of type $G_2$}

The $q$-Schur algebra of type $G_2$ is defined to be $\Sc_q(n) = \End_\HH (\T_n)$. Its specialization at $q=1$ is the Schur algebra (of type $G_2$) over $\Z$, saying $\Sc(n) = \End_W (\T_n^1)$. Set $\Sc_q(n)_\Q =\Q(q)\otimes_\A \Sc_q(n)$.

\begin{prop}
The dimension of the algebra $\Sc_q(n)_\Q$ is 
$3n^4+6n^3+6n^2+3n+1$.
\end{prop}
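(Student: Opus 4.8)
The plan is to compute $\dim_{\Q(q)} \Sc_q(n)_\Q$ as the cardinality of the index set $\Xi$ (equivalently $\XX_n^{(2)}$) from Proposition~\ref{prop:basis}, since $\{\phi_{\gamma\nu}^g \mid (\gamma,g,\nu)\in\Xi\}$ is an $\A$-basis of $\Sc_q$, hence a $\Q(q)$-basis after base change. By the bijection $\Xi \cong \sqcup_{\gamma,\nu\in\Lambda_n} \D_{\gamma\nu}$, one has
\[
\dim \Sc_q(n)_\Q = \sum_{\gamma,\nu\in\Lambda_n} |\D_{\gamma\nu}| = \sum_{\gamma,\nu\in\Lambda_n} |W_\gamma \backslash W / W_\nu|.
\]
So the whole computation reduces to counting double cosets $W_\gamma\backslash W/W_\nu$ in the Weyl group $W$ of type $G_2$ (which has order $12$), summed over all pairs of $\varepsilon$-weights in $\Lambda_n$.

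The next step is to organize $\Lambda_n$ by the type of parabolic subgroup attached. Following Remark~\ref{rem:dimTn}, the $\varepsilon$-weights in $\Lambda_n$ fall into four classes according to $W_\gamma$: there are ${n\choose 2}$ regular ones (with $W_\gamma=\{\id\}$), $n$ with $W_\gamma=\langle s_1\rangle$, $n$ with $W_\gamma=\langle s_2\rangle$, and $1$ singleton with $W_\gamma=W$ itself. I would then tabulate $|W_J\backslash W/W_{J'}|$ for all pairs $(J,J')$ among $\{\emptyset, \{1\}, \{2\}, \{1,2\}\}$: for $W$ of type $G_2$ with $|W|=12$ this is a small finite computation (e.g. $|\{\id\}\backslash W/\{\id\}| = 12$, $|\langle s_i\rangle\backslash W/\{\id\}| = 6$, $|\langle s_1\rangle\backslash W/\langle s_1\rangle| = ?$, etc.), readily done by hand using the Bruhat graph \eqref{Bruhat} or directly from coset combinatorics. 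Having that $4\times 4$ table of double-coset counts, the sum becomes
\[
\dim \Sc_q(n)_\Q = \sum_{\text{4 classes of }\gamma}\;\sum_{\text{4 classes of }\nu}\; (\text{size of }\gamma\text{-class})\cdot(\text{size of }\nu\text{-class})\cdot |W_\gamma\backslash W/W_\nu|,
\]
a quadratic form in the vector of class sizes $\big({n\choose2}, n, n, 1\big)$; expanding this polynomial in $n$ and simplifying should yield $3n^4 + 6n^3 + 6n^2 + 3n + 1$.

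I do not expect any conceptual obstacle here — the result is essentially a bookkeeping consequence of Proposition~\ref{prop:basis} and the structure of $W(G_2)$. The one place to be careful is getting the double-coset counts $|W_\gamma\backslash W/W_\nu|$ exactly right for the mixed cases (e.g. $\langle s_1\rangle\backslash W/\langle s_2\rangle$ versus $\langle s_1\rangle\backslash W/\langle s_1\rangle$), since $s_1$ and $s_2$ play asymmetric roles in $G_2$; a clean way to double-check is to verify that the leading and constant terms come out right ($12{n\choose2}^2 = 6n^4 - 6n^3 + \cdots$ contributes the $3n^4$ after the arithmetic, and the singleton-singleton term contributes the constant $1$), and to sanity-check small cases such as $n=2$, where the formula predicts $\dim\Sc_q(2)_\Q = 48+48+24+6+1 = 127$, against a direct enumeration of $\XX_2^{(2)}$ using $|\XX_2| = 25$.
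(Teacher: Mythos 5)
Your route is correct, but it is genuinely different from the paper's. You count the standard basis of Proposition~\ref{prop:basis}: $\dim \Sc_q(n)_\Q=|\Xi|=\sum_{\gamma,\nu\in\Lambda_n}|W_\gamma\backslash W/W_\nu|$, organized by the four parabolic types with class sizes $\binom{n}{2},n,n,1$ from Remark~\ref{rem:dimTn}. The needed $G_2$ double-coset table is $|W_J\backslash W/W_K|=12,6,4,3,1$ for $(\emptyset,\emptyset)$, $(\emptyset,\{i\})$, $(\{i\},\{i\})$, $(\{1\},\{2\})$, and any pair involving $W$ respectively (the count $3$ uses that $s_1$ and $s_2$ are not conjugate in $W(G_2)$, so every $\langle s_1\rangle$--$\langle s_2\rangle$ double coset has size $4$), and the resulting quadratic form $12r^2+24rn+2r+14n^2+4n+1$ with $r=\binom{n}{2}$ does simplify to $3n^4+6n^3+6n^2+3n+1$. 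The paper instead invokes the double centralizer property (Theorem~\ref{thm:double}) at $q=1$ over $\C$, decomposes $\T^1_n$ multiplicity-freely over the six irreducible $W(G_2)$-modules, reads off the dimensions of the simple Schur-algebra modules, and sums their squares; this buys extra information (the irreducible dimensions \eqref{dim:Sirrep} and the semisimple structure) at the cost of a base change and a representation-theoretic input, whereas your count is purely combinatorial, works integrally over $\A$, and is in fact the alternative the paper itself flags in the remark following the proposition (computing $|\XX_n^{(2)}|$).

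Two small slips in your sanity-check asides, which do not affect the argument: $|\XX_2|=6\cdot 4+6\cdot 2+1=37$, not $25$ (by \eqref{dim:Tn}); and $12\binom{n}{2}^2=3n^2(n-1)^2=3n^4-6n^3+3n^2$, so the regular-regular term already has leading coefficient $3$, not $6$. Your check $\dim\Sc_q(2)_\Q=127$ is right and agrees with the paper's sum of squares $36+36+36+1+9+9$.
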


\begin{proof}
For the sake of simplifying notations, let us work with the $q=1$ specialization over the complex field $\C$ in the proof. (The same argument works over an algebraic closure of $\Q(q)$.) Denote $\Sc(n)_\C =\C \otimes_\Z \Sc(n)$.

First assume $n\ge 2$. The double centralizer property gives us a multiplicity-free decomposition of $(\Sc(n)_\C, \C W)$-modules
\[
\C\otimes_\Z \T^1_n \cong \sum_{\varrho \in \text{Irr} W} L_\varrho \otimes \varrho,
\]
where $\text{Irr} W$ denotes the set of inequivalent irreducible $\C W$-modules and $\{L_\varrho\}_{\varrho \in \text{Irr} W}$ denotes the set of inequivalent irreducible $\Sc (n)_\C$-modules.

Note $W$ has 6 irreducible modules over $\C$. The 2 irreducibles of dimension 2 are denoted by $\underline{2}_1, \underline{2}_2$ while the remaining 4 irreducibles of  dimension 1 are denoted by $\underline{1}_1, \underline{1}_2, \underline{1}_3, \underline{1}_4$, with $\underline{1}_1$ being trivial and $\underline{1}_2$ being the sign module. As $W$-modules, we have
\begin{align*}
\C W &\cong \underline{2}_1^{\oplus 2} \oplus \underline{2}_2^{\oplus 2} \oplus \underline{1}_1 \oplus \underline{1}_2 \oplus \underline{1}_3 \oplus \underline{1}_4,
\\
\Ind^{W}_{\langle s_1\rangle} (\text{\bf Triv}) &\cong \underline{2}_1 \oplus \underline{2}_2 \oplus \underline{1}_1 \oplus \underline{1}_3,
\qquad
\Ind^{W}_{\langle s_2\rangle} (\text{\bf Triv}) \cong  \underline{2}_1 \oplus \underline{2}_2 \oplus \underline{1}_1 \oplus \underline{1}_4,
\end{align*}
where ${\bf Triv}$ denotes the trivial module.
From these we count the multiplicities of the simple modules $\underline{2}_1, \underline{2}_2$, $\underline{1}_1, \underline{1}_2, \underline{1}_3, \underline{1}_4$ in the $W$-module $\T^1_n$ to be
$2 {n \choose 2} +n+n, \;
2 {n \choose 2} +n+n, \;
{n \choose 2} +n +n +1, \;
{n \choose 2}, \;
{n \choose 2} +n, \;
{n \choose 2} +n.
$
Therefore the dimensions of the corresponding simple $\Sc(n)_\C$-modules are
\begin{equation}  \label{dim:Sirrep}
2{n+1 \choose 2}, \quad
2{n+1 \choose 2}, \quad
{n+2 \choose 2}, \quad
{n \choose 2}, \quad
{n+1 \choose 2}, \quad
{n+1 \choose 2},
\end{equation}
respectively.
The dimension of $\Sc_q(n)_\Q$ is equal to the sum of the squares of the numbers in \eqref{dim:Sirrep} by the double centralizer property (Theorem~\ref{thm:double}), which can be rewritten as in the proposition.

In case $n=1$, the regular representation $\C W$ will not appear, which corresponds to ${1 \choose 2}=0$ in the above calculation; hence the  dimension formula above for $\Sc_q(n)_\Q$ continues to hold.
\end{proof}

\begin{rem}
The transversal $X_n^{(2)}$ to the $W$-orbits in $X_n \times X_n$ consists of elements $(\mi,\mj)$ with $\mi=(i_1,i_2,i_3)$ and $\mj=(j_1,j_2,j_3)$
satisfying the following conditions:
\begin{itemize}
\item[(i)] $0\leq j_1\leq j_2$;

\item[(ii)] If $j_1=0$ then $i_1\geq0$;

\item[(iii)] If $j_1=j_2$ then $i_1\leq i_2$.
\end{itemize}
The dimension of $\Sc_q(n)_\Q$ can also be determined by computing the size of $X_n^{(2)}$.
\end{rem}


\subsection{The Grothendieck group $[\OO_n]$ and $\T_n^1$}
 \label{transfunc}

It is well known that the Lie algebra $\g$ of type $G_2$ can be embedded into the orthogonal Lie algebra
$\mathfrak{so}(7)$, and the natural $\mathfrak{so}(7)$-module $\underline{\bf 7}$ restricts to be the minimal faithful representation of $\mathfrak{g}$, which can be identified with a fundamental weight $\mathfrak{g}$-module of highest weight $\omega_1$ (recall $\omega_1=\delta_3-\delta_2$ in \eqref{fund}).
As a $\mathfrak{g}$-module, the module $\underline{\bf 7}$ has weights
$\{0,\delta_i-\delta_j \; (1\le i\neq j \le 3)\}.$

Recall the BGG category $\mathcal O$ is the category of all finitely generated, integral-weight, $\mathfrak{g}$-modules $M$ which are locally finite over $\mathfrak b$.
Let $[\OO]$ denote its Grothendieck group.
The action of the center of the enveloping algebra of $\mathfrak g$ on a Verma module $M(\mi)$ of highest weight $\mi-\rho\in X$ yields a central character $\chi_\mi$, where
\begin{equation*}
\rho =-\del_1 -2\del_2+3\del_3\in X.
\end{equation*}
is the half sum of positive roots.
We have the following equivalences
\[
\chi_\mi=\chi_\mj \quad\Leftrightarrow \quad \mi \sim \mj
\quad\Leftrightarrow \quad \mi, \mj \text{ have the same $\varepsilon$-weight}
\]
(see \eqref{linkage} for notations).
Let $\OO_\gamma$ denote the full subcategory of $\OO$ consisting of the modules all of whose composition factors have central character $\chi_\la$, where $\gamma$ is the $\varepsilon$-weight of $\la$. Recall the set of $\varepsilon$-weights $\Lambda$ in \eqref{eq:Lambdag2}.
We have the block decomposition
\[
\OO=\bigoplus_{\gamma\in\Lambda}\OO_\gamma.
\]
For $n\in \N$, we also consider the full subcategory $\OO_n$ of $\OO$:
\[
\OO_n=\bigoplus_{\gamma\in\Lambda_n}\OO_\gamma.
\]
We let $\Pro_\gamma:\OO\rightarrow \OO_\gamma$ be the natural projection.
We define translation functors
\[
e_a, f_{a}, t: \OO\rightarrow\OO,
\qquad \text{ for } a\in\N,
\]
which are additive such that for each $\gamma\in\Lambda$ and $M\in\OO_\gamma$,
\begin{align}\label{trans}
\begin{split}
e_a(M)&:=\Pro_{\gamma-\varepsilon_{a+1}+\varepsilon_a}(M\otimes \underline{\bf 7}), \\
f_a(M)&:=\Pro_{\gamma+\varepsilon_{a+1}-\varepsilon_a}(M\otimes \underline{\bf 7}), \\
t(M)&:=\Pro_{\gamma}(M\otimes \underline{\bf 7}).
\end{split}
\end{align}

Recall $M(\mi)$ the Verma module of $\mathfrak{g}$ of highest weight $\mi-\rho\in X$. The following lemma is standard.
\begin{lem}\label{decomVerma}
For any $\mi\in X$, the $\mathfrak{g}$-module $M(\mi)\otimes \underline{\bf 7}$ admits a standard filtration whose subquotients isomorphic to $M(\mi+\mj)$, where $\mj$ runs over  $\{0, \delta_i-\delta_j(1\le i\neq j \le 3)\}$.
\end{lem}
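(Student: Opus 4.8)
The plan is to invoke the standard tensor identity for Verma modules together with a $\mathfrak b$-module filtration of $\underline{\bf 7}$. Recall that $M(\la) = U(\mathfrak g)\otimes_{U(\mathfrak b)} \C_\la$, where $\C_\la$ is the one-dimensional $\mathfrak b$-module on which $\mathfrak h$ acts by $\la$ and $\mathfrak n$ acts by zero. Since $U(\mathfrak g)$ is free as a right $U(\mathfrak b)$-module (PBW), the functor $U(\mathfrak g)\otimes_{U(\mathfrak b)}(-)$ is exact. First I would establish the isomorphism of $\mathfrak g$-modules
\[
M(\la)\otimes \underline{\bf 7}\;\cong\; U(\mathfrak g)\otimes_{U(\mathfrak b)}\big(\underline{\bf 7}|_{\mathfrak b}\otimes \C_\la\big),
\]
which follows from the projection/tensor identity: $M(\la)\otimes \underline{\bf 7}= \big(U(\mathfrak g)\otimes_{U(\mathfrak b)}\C_\la\big)\otimes\underline{\bf 7}\cong U(\mathfrak g)\otimes_{U(\mathfrak b)}\big(\C_\la\otimes\underline{\bf 7}|_{\mathfrak b}\big)$, where on the right $U(\mathfrak g)$ acts by left multiplication on the first factor.

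Next I would produce a filtration of the $\mathfrak b$-module $N:=\underline{\bf 7}|_{\mathfrak b}\otimes\C_\la$ by $\mathfrak b$-submodules with one-dimensional subquotients. Enumerate the weights of $\underline{\bf 7}$ (equivalently of $N$, shifted by $\la$) as $\mu_1,\dots,\mu_7$, ordered so that $\mu_i\preceq\mu_j$ implies $i\le j$, i.e.\ a linear refinement of the dominance order with a highest weight coming last. Since $\mathfrak n$ raises weights (maps the $\mu$-weight space into the sum of weight spaces of weights $\succ\mu$), the subspaces $N_{\le k}$ spanned by the weight vectors of weights $\la+\mu_1,\dots,\la+\mu_k$ are $\mathfrak b$-submodules, and the successive quotients $N_{\le k}/N_{\le k-1}$ are the one-dimensional $\mathfrak b$-modules $\C_{\la+\mu_k}$. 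Applying the exact functor $U(\mathfrak g)\otimes_{U(\mathfrak b)}(-)$ to this filtration yields a filtration of $M(\la)\otimes\underline{\bf 7}$ whose subquotients are $U(\mathfrak g)\otimes_{U(\mathfrak b)}\C_{\la+\mu_k}=M(\la+\mu_k)$.

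Finally I would identify the multiset $\{\mu_1,\dots,\mu_7\}$ of weights of $\underline{\bf 7}=L(\omega_1)$ with $\{0,\delta_i-\delta_j\mid 1\le i\ne j\le 3\}$: the six roots $\delta_i-\delta_j$ occur with multiplicity one, and since $\dim L(\omega_1)=7$ the zero weight occurs with multiplicity exactly one. This is exactly the multiset stated in the lemma, so the subquotients of the filtration are precisely the $M(\la+\mu)$ as $\mu$ ranges over $\{0,\delta_i-\delta_j\}$, proving the claim. There is no real obstacle here; the only point requiring a moment's care is the choice of weight ordering guaranteeing that the partial sums of weight spaces are $\mathfrak n$-stable, which is handled by taking any linear refinement of $\preceq$.
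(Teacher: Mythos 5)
Your overall strategy is exactly the standard argument that the paper is implicitly invoking when it says the lemma is standard (the tensor identity $M(\la)\otimes V\cong U(\g)\otimes_{U(\mathfrak b)}(\C_\la\otimes V|_{\mathfrak b})$, exactness of induction by PBW, and a $\mathfrak b$-stable weight filtration of $V|_{\mathfrak b}$), and your identification of the weights of $\underline{\bf 7}$ with $\{0,\delta_i-\delta_j\}$ is correct. The paper itself offers no proof, so your write-up supplies the details it omits.

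There is, however, one step that fails as literally written: you claim that, after ordering the weights so that $\mu_i\preceq\mu_j$ implies $i\le j$, the spans $N_{\le k}$ of the weight vectors of the $k$ \emph{smallest} weights are $\mathfrak b$-submodules "since $\mathfrak n$ raises weights." The implication goes the other way: because $\mathfrak n$ maps the $\mu$-weight space into weight spaces of weights $\succ\mu$, a span of weight spaces is $\mathfrak n$-stable precisely when the corresponding set of weights is upward closed for $\preceq$. Your $N_{\le k}$ is downward closed, hence stable under $\mathfrak n^-$ but not under $\mathfrak n$; for instance the lowest weight vector of $\underline{\bf 7}$ spans $N_{\le 1}$, which is visibly not $\mathfrak b$-stable. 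The repair is immediate: either reverse the ordering (highest weight first) or use the upward-closed spans $N_{\ge k}$ of the weight vectors of weights $\la+\mu_k,\dots,\la+\mu_7$, which form a descending chain of $\mathfrak b$-submodules with one-dimensional quotients $N_{\ge k}/N_{\ge k+1}\cong\C_{\la+\mu_k}$. Applying the exact functor $U(\g)\otimes_{U(\mathfrak b)}(-)$ to that chain gives the desired standard filtration with subquotients $M(\la+\mu_k)$, and the rest of your argument goes through unchanged.
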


Recall $\T^1$ and $\T_n^1$ in \eqref{ZformTg2}. We have natural $\Z$-linear isomorphisms
\begin{align}
  \label{OnTn}
 \begin{split}
\psi: [\OO]\longrightarrow \T^1, & \qquad \psi_n :[\OO_n]\longrightarrow \T_n^1,
\\
 [M(\mi)] & \mapsto v_{\mi}.
  \end{split}
\end{align}
Denote by $T(\mi)$, for $\mi \in X$, the tilting module of highest weight $\mi-\rho$.
The Kazhdan-Lusztig conjecture can be reformulated as follows: the isomorphism $\psi$ sends the tilting module $[T(\mi)]$ to the canonical basis $C_{\mi}$ specialized at $q=1$. We remark that such a reformulation of KL theory makes sense for any finite type.

\subsection{Identifying translation functors with $q$-Schur generators}

Recall the notations $\phi_{\gamma\nu}^g, [\phi_{\gamma\nu}^g]$ and $\{\phi_{\gamma\nu}^g\}$ in \eqref{eq:phig}, \eqref{def:dA} and \eqref{canobasis}, respectively.
We introduce the following elements $e_a,f_a,t$ in $\Sc_q(n)$, for $0\le a <n$:
\begin{align}
\label{eq:eftq}
\begin{split}
e_a&=\sum_{k=0}^{n}\{\phi_{\varepsilon_a+\varepsilon_{k},\varepsilon_{a+1}+\varepsilon_{k}}^\id\}=\sum_{k=0}^{n}[\phi_{\varepsilon_a+\varepsilon_{k},\varepsilon_{a+1}+\varepsilon_{k}}^\id],
\\
f_a&=\sum_{k=0}^{n}\{\phi_{\varepsilon_{a+1}+\varepsilon_{k},\varepsilon_{a}+\varepsilon_{k}}^\id\}=\sum_{k=0}^{n}[\phi_{\varepsilon_{a+1}+\varepsilon_{k},\varepsilon_{a}+\varepsilon_{k}}^\id],
\\
t& =\sum_{0<k+1<l}\{\phi_{\varepsilon_k+\varepsilon_l,\varepsilon_k+\varepsilon_l}^\id\}
+ \sum_{k=0}^{n-1}\{\phi_{\varepsilon_k+\varepsilon_{k+1},\varepsilon_k+\varepsilon_{k+1}}^{s_1}\}
\\
&= \sum_{0<k+1<l}[\phi_{\varepsilon_k+\varepsilon_l,\varepsilon_k+\varepsilon_l}^\id]
+ \left([\phi_{\varepsilon_0+\varepsilon_1,\varepsilon_0+\varepsilon_1}^{s_1}]+q^2[\phi_{\varepsilon_0+\varepsilon_1,\varepsilon_0+\varepsilon_1}^\id]\right)
\\
&\qquad\quad +\sum_{k=1}^{n-1}\left([\phi_{\varepsilon_k+\varepsilon_{k+1},\varepsilon_k+\varepsilon_{k+1}}^{s_1}]+q[\phi_{\varepsilon_k+\varepsilon_{k+1},\varepsilon_k+\varepsilon_{k+1}}^\id]\right).
\end{split}
\end{align}
The identification of the two formulas for $t$ above uses \eqref{eq:CB1} and the fact that the Kazhdan-Lusztig polynomials for type $G_2$ are
\begin{equation*}
P_{y,g}=
\left\{
\begin{array}{ll}
1, & \text{ if } y\le g;\\
0, &\mbox{otherwise}.
\end{array}
\right.
\end{equation*}

By definition, $t, e_a, f_a (0\leq a<n)$ are bar invariant.

\begin{prop}
Keep the identification $[\OO_n]\cong \T^1_n$ in \eqref{OnTn}. Then the following holds.
\begin{enumerate}
\item
The translation functors $e_a$, $f_a$ $(0\le a<n)$, and $t$ in \eqref{trans} acting on $[\OO_n]$ can be identified with the $q=1$ specialization of the elements of $\Sc_q(n)$ in  \eqref{eq:eftq} in the same notations acting on $\T_n$.
\item
The explicit formulas for the actions of $e_a$, $f_a$ $(0\le a<n)$ and $t$ in  \eqref{eq:eftq} on the standard basis for $\T_n$ are given by the formulas \eqref{eq:e}--\eqref{eq:t} in the Appendix~\ref{app:B}.
\end{enumerate}
\end{prop}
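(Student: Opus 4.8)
The plan is to establish part~(1) by comparing two endomorphisms of $\T_n^1$ on a small generating set, and then to read off part~(2) as the explicit $q$-refinement. First I would observe that the translation functors $e_a, f_a, t$ of \eqref{trans} are exact, hence descend to $\Z$-linear endomorphisms $\bar e_a, \bar f_a, \bar t$ of $[\OO_n]\cong\T_n^1$ (via $\psi_n$ of \eqref{OnTn}). Using that the multiset of weights of $\underline{\bf 7}$ is $W$-stable, that $\varepsilon$-weights are $W$-invariant, and the identity $w\cdot(\la+\mu)=(w\cdot\la)+w(\mu)$, one checks on Verma classes that $\bar e_a,\bar f_a,\bar t$ commute with the right $W$-action on $\T_n^1$; thus they lie in $\Sc(n)=\End_W(\T_n^1)$, as do the $q=1$ specializations of the elements in \eqref{eq:eftq}. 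Since an element of $\End_W(\T_n^1)$ is determined by its values on the anti-dominant vectors $v_{\bu_\nu}$ (which $W$-generate $\T_n^1$, cf. the proof of Proposition~\ref{prop:basis}), it suffices to check that $\bar e_a(v_{\bu_\nu})=e_a|_{q=1}(v_{\bu_\nu})$ for all $\nu\in\Lambda_n$, and likewise for $f_a$ and $t$.

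Next I would compute both sides explicitly. On the category side, Lemma~\ref{decomVerma} gives $[M(\la)\otimes\underline{\bf 7}]=\sum_\mu[M(\la+\mu)]$ with $\mu$ running over the seven weights $\{0,\delta_i-\delta_j\}$ of $\underline{\bf 7}$, and an exact block projection $\Pro_{\gamma'}$ simply retains the summands of $\varepsilon$-weight $\gamma'$. Transporting through $\psi_n$ and recording that the shift $\la\mapsto\la+(\delta_i-\delta_j)$ raises the $i$-th coordinate of $\mi_\la$ by one and lowers the $j$-th by one, I obtain $\bar e_a(v_{\bu_\nu})$ as the sum of the resulting $v_{\mi_{\la+\mu}}$ landing in the block $\nu-\varepsilon_{a+1}+\varepsilon_a$ (and $0$ unless $\varepsilon_{a+1}$ occurs in $\nu$); the sum truncates exactly when a shifted weight leaves $\XX_n$, matching the range $k\le n$ in \eqref{eq:eftq}. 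On the $q$-Schur side, the action formula \eqref{actionphi}, the normalization \eqref{def:dA}, and the passage \eqref{eq:CB1} from $[\phi_{\gamma\nu}^g]$ to $\{\phi_{\gamma\nu}^g\}$ --- using that every type $G_2$ Kazhdan--Lusztig polynomial is $1$ --- express $\{\phi_{\gamma\nu}^g\}|_{q=1}(v_{\bu_\nu})$ as a sum of $v_{\bu_\gamma\cdot w}$ over the minimal-length coset representatives in the relevant double coset(s), with only $g=\id$ occurring for $e_a,f_a$ and the diagonal part of $t$, and $g\in\{\id,s_1\}$ for $t$. I would then match the two descriptions by a bookkeeping of $\varepsilon$-weights, done case by case according to whether $\bu_\nu$ is regular, lies on the $s_1$-wall, lies on the $s_2$-wall, or is $W$-fixed, and --- for weights near the boundary --- according to which of the seven points $\la+\mu$ leave $\XX_n$; the explicit $W$-action \eqref{Waction}, the Bruhat graph \eqref{Bruhat}, and the weight list of $\underline{\bf 7}$ reduce each case to a short verification.

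For part~(2), once (1) is in hand the remaining work is purely computational: substitute the $G_2$ data --- the $W$-action \eqref{Waction}, the Hecke action \eqref{H1action}--\eqref{H2action}, the Bruhat graph \eqref{Bruhat} --- into the action formula for each $\phi^\id_{\gamma\nu}$ and $\phi^{s_1}_{\gamma\nu}$ occurring in \eqref{eq:eftq}, and collect terms to read off the formulas \eqref{eq:e}--\eqref{eq:t}; this bookkeeping is what Appendix~\ref{app:B} carries out. I expect the main obstacle to be the matching in (1): one must show, uniformly in $\la$, that the block decomposition of $M(\la)\otimes\underline{\bf 7}$ reproduces term by term the linear combination coming from the Schur-algebra element. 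This hinges on a precise understanding of how the seven weights of $\underline{\bf 7}$ redistribute among linkage classes as $\la$ varies --- with genuinely different behavior when $\la$ sits on a wall (some of the $\la+\mu$ then coincide or become singular) and near the boundary of $\XX_n$ --- while keeping the normalizing $q$-powers of \eqref{def:dA} and \eqref{eq:CB1} consistent throughout.
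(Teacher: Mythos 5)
Your proposal is correct and follows essentially the same route as the paper, whose proof is likewise a direct computation: one computes the action of the elements \eqref{eq:eftq} on the standard basis of $\T_n$ (yielding \eqref{eq:e}--\eqref{eq:t}), and, via Lemma~\ref{decomVerma}, checks that the translation functors act on the Verma basis of $[\OO_n]$ by the $q=1$ specialization of the same formulas. Your only extra ingredient --- the $W$-equivariance of the translation-functor operators on $[\OO_n]$, used to reduce the comparison to the anti-dominant vectors $v_{\bu_\gamma}$ --- is a harmless streamlining; note also that part (2) is independent of part (1), which is why the paper verifies (2) first.
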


\begin{proof}
The proof is computational. One verifies (2) first by a direct computation. Then using Lemma \ref{decomVerma} one verifies that the translation functors $e_a$, $f_a$ and $t$ in \eqref{trans} acting on the Verma basis in $[\OO_n]$ are given by  the $q=1$ specialization of the formulas \eqref{eq:e}--\eqref{eq:t} in the Appendix~\ref{app:B}, proving (1). We skip the details.
\end{proof}

\begin{prop}\label{generators}
The $q$-Schur algebra $\Sc_q(n)_\Q$ is generated by $e_a, f_a (0\leq a<n)$ and $t$.
\end{prop}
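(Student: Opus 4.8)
The plan is to show that the $\Q(q)$-subalgebra $\mathcal{U}\subseteq\Sc_q(n)_\Q$ generated by $e_a,f_a$ $(0\le a<n)$ and $t$ is all of $\Sc_q(n)_\Q$. By Proposition~\ref{prop:basis} and \eqref{def:dA}, $\Sc_q(n)_\Q$ has the standard basis $\{[\phi^g_{\gamma\nu}]\mid(\gamma,g,\nu)\in\Xi\}$, and by Lemma~\ref{lem:iden} it decomposes as $\bigoplus_{\gamma,\nu\in\Lambda_n}\Hom_\HH(x_\nu\HH,x_\gamma\HH)$; so it suffices to realize, for each ordered pair $(\gamma,\nu)$, a spanning set of the $(\gamma,\nu)$-block inside $\mathcal{U}$. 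I would organize this in three stages: (i) the block idempotents $1_\gamma=[\phi^{\id}_{\gamma\gamma}]$ all lie in $\mathcal{U}$; (ii) for each $\gamma$, the diagonal subalgebra $1_\gamma\Sc_q(n)_\Q 1_\gamma=\End_\HH(x_\gamma\HH)$ lies in $\mathcal{U}$; (iii) each off-diagonal block $\Hom_\HH(x_\nu\HH,x_\gamma\HH)$ is spanned by products $1_\gamma\theta_1\cdots\theta_r 1_\nu$ with $\theta_i\in\{e_a,f_a\}$ together with the diagonal elements from (ii).

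Stage (i) is the heart of the matter, and here the explicit matrices \eqref{eq:e}--\eqref{eq:t} are used directly. Since $\Sc_q(n)_\Q=\End_\HH(\T_n)$ with $\HH$ semisimple over $\Q(q)$, the algebra $\Sc_q(n)_\Q$ is semisimple with exactly the six simple modules appearing in the proof of the dimension formula above; hence it is enough to check that each of these six restricts to a simple $\mathcal{U}$-module and that the six are pairwise non-isomorphic over $\mathcal{U}$, for then Jacobson density together with the Chinese remainder theorem give a surjection $\mathcal{U}\twoheadrightarrow\Sc_q(n)_\Q$, whence equality. Equivalently and more concretely, one produces each $1_\gamma$ as a polynomial in the $e_a,f_a,t$: one checks from \eqref{eq:e}--\eqref{eq:t} that $e_af_a$ and $f_ae_a$ preserve the decomposition $\T_n=\bigoplus_\gamma\T_\gamma$ (they travel $\gamma\rightsquigarrow\gamma\pm(\varepsilon_{a+1}-\varepsilon_a)\rightsquigarrow\gamma$) and vanish off finitely many blocks, and then, starting from the extreme blocks with respect to the partial order on $\Lambda_n$ given by $\varepsilon_a+\varepsilon_b\le\varepsilon_{a'}+\varepsilon_{b'}$ iff $a\le a'$ and $b\le b'$ (so that the singleton $\mathbf 0$ is minimal and $2\varepsilon_n$ maximal), one peels the $1_\gamma$ off one at a time, at each stage using the already-produced idempotents to cut a suitable product $e_af_a$, $f_ae_a$ or $t$ down to a nonzero scalar on a single block. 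I expect this bookkeeping — keeping track of which blocks each $e_a,f_a$ connects and of the scalars produced — to be the principal difficulty, although it is finite and entirely explicit in type $G_2$.

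Given the $1_\gamma$, stages (ii) and (iii) are routine. For a regular $\gamma$ one has $\End_\HH(x_\gamma\HH)\cong\HH$, generated by the operators "multiply by $H_1$" and "multiply by $H_2$" on $\T_\gamma$; one obtains these as $1_\gamma t 1_\gamma$ (which by \eqref{eq:eftq} equals $1_\gamma$ or $[\phi^{s_1}_{\gamma\gamma}]+q^{c}1_\gamma$ for an explicit $c$) and as $1_\gamma e_af_a 1_\gamma$, $1_\gamma f_ae_a 1_\gamma$ for suitable $a$, after subtracting the known scalar and lower terms. For each of the three kinds of singular blocks ($2\varepsilon_a$ of $W/\langle s_1\rangle$-type, $\varepsilon_b$ of $W/\langle s_2\rangle$-type, and the singleton), $\End_\HH(x_\gamma\HH)$ has dimension at most $4$ with basis the $\phi^g_{\gamma\gamma}$ for $g$ in the small set $\D_{\gamma\gamma}$, and one checks directly that $1_\gamma$, $1_\gamma t 1_\gamma$, and the composites obtained by travelling from $\gamma$ to a neighbouring block and back span this space. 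Finally, for stage (iii): since $W$ is the dihedral group of order $12$ (see \eqref{Bruhat}), each double coset $W_\gamma gW_\nu$ has a short reduced representative, and Lemma~\ref{eq:xandC} together with \eqref{eq:CB1} shows that the corresponding $[\phi^g_{\gamma\nu}]$ equals, modulo a $\Q(q)$-combination of the $[\phi^{g'}_{\gamma\nu}]$ with $g'<g$, a product of the elementary cross-block maps $1_{\gamma'}e_a 1_{\nu'}$, $1_{\gamma'}f_a 1_{\nu'}$ and the diagonal elements from (ii); a downward induction on $\ell(g)$ then produces every $[\phi^g_{\gamma\nu}]$, so $\mathcal{U}=\Sc_q(n)_\Q$.
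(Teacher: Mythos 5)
Your outline defers precisely the part that constitutes the proof, and one of the steps you declare routine is in fact false as stated. Concretely, stage (ii) fails for $n\ge 4$: take a regular block $\gamma=\varepsilon_k+\varepsilon_l$ with $k\ge 2$ and $l\ge k+2$ (e.g.\ $\gamma=\varepsilon_2+\varepsilon_4$). Every block reachable from $\gamma$ by a single $e_a$ or $f_a$ is again regular, and by \eqref{eq:eftq} and the formulas \eqref{eq:e}--\eqref{eq:t} one checks that $t$, $e_af_a$ and $f_ae_a$ all restrict to $1_\gamma$ or $0$ on $\T_\gamma$ (a composite $[\phi^{\id}_{\gamma\nu}][\phi^{\id}_{\nu\gamma}]$ between two regular blocks is exactly $1_\gamma$). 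Hence $1_\gamma t1_\gamma$, $1_\gamma e_af_a1_\gamma$, $1_\gamma f_ae_a1_\gamma$ span only $\Q(q)1_\gamma$ and cannot generate the $12$-dimensional algebra $\End_\HH(x_\gamma\HH)\cong\HH$; you are forced to use longer words that travel through singular blocks (this is visible in the paper's explicit relations, e.g.\ the elements $f_0f_1e_1e_0-1$ and $e_2e_1f_1f_2$ in Appendix~\ref{app:C}). Moreover stage (i), whether in the "peeling off idempotents" form or in the representation-theoretic form, is exactly the nontrivial content of the proposition and is only announced, not carried out; and the density/Chinese-remainder version needs more than simplicity and pairwise non-isomorphism of the six restricted modules, since $\Q(q)$ is not algebraically closed: you must also know $\End_{\mathcal U}(M_i)=\Q(q)$ (absolute irreducibility, or splitness of $\Sc_q(n)_\Q$ over $\Q(q)$), otherwise density only gives $\End_{D_i}(M_i)$ for a possibly larger division ring $D_i$. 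Stage (iii) has a similar unverified point: a single cross-block element $1_\gamma e_a1_\nu$ generates $\Hom_\HH(x_\nu\HH,x_\gamma\HH)$ as a bimodule over the diagonal blocks only if it is nonzero on every common isotypic component, which again requires checking.

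For comparison, the paper takes a different and in a sense more economical route: it first proves the $q=1$ statement (Proposition~\ref{prop:generator1}) by exhibiting, completely explicitly, every basis element $\eta_{\mi,\mj}$ as a polynomial in $e_a,f_a,t$ (Appendix~\ref{app:C}, reduced to $n=2$ where all parabolic types occur, with a noted modification for $n>2$), and then transfers the result to generic $q$ by a specialization argument: if the polynomials $p_i(e'_a,f'_a,t')$ form a $\Q$-basis at $q=1$, a putative $\Q(q)$-linear dependence among the $p_i(e_a,f_a,t)$, after clearing denominators and common factors, specializes at $q=1$ to a nontrivial dependence, a contradiction; dimension count then finishes the argument. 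Your plan works directly at generic $q$ and so avoids this trick, but it thereby commits you to carrying out all the block-by-block computations over $\Q(q)$ — computations which, beyond not being done, are set up incorrectly at stage (ii). To salvage your approach you would need to replace the short words $e_af_a$, $f_ae_a$, $t$ by explicit longer monomials passing through the singular blocks and verify the resulting spanning claims, at which point the amount of explicit verification is comparable to the paper's Appendix~\ref{app:C}.
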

\begin{proof}
The computational proof is given in Appendix~\ref{app:C}.
\end{proof}

\begin{rem}
  \label{rem:idemp}
The generators $e_a, f_a, t$ in \eqref{eq:eftq} are sums of canonical basis elements in $\Sc_q(n)$. Clearly the idempotents $\id_\gamma$ associated to each linkage class $\gamma$ belongs to the algebra $\Sc_q(n)$; note $1 =\sum_\gamma \id_\gamma$ is the identity. Then the (new) generators $e_a \id_\gamma, f_a \id_\gamma, t \id_\gamma$, if nonzero, are canonical basis elements; they are the simplest canonical basis elements in a suitable sense (like in classical types).
\end{rem}

\begin{rem}
  \label{rem:typeEF}
The translation functors and the generating sets for $q$-Schur algebras of types $F_4$ and $E_{6,7,8}$ remain to be worked out. It will be interesting to see if the phenomenon observed in Remark~\ref{rem:idemp} carries over.
\end{rem}

It seems likely that there are conceptual connections between the canonical basis of the $q$-Schur algebra and the graded indecomposable projective functors in a Koszul graded lift of the category $\OO$.

\appendix

\section{Explicit formulas in type $G_2$}
  \label{sec:app}

In this appendix, we consider the type $G_2$ case only. We collect here various formulas and computations needed in Section~\ref{sec:G2}.
\subsection{Formulas for the bar involution on $\T$}
  \label{app:A}

Recall $\overline{H_k} =H_k^{-1} =H_k+(q-q^{-1})$, for $k=1,2$. It follows from \eqref{H1action}--\eqref{H2action} that
\begin{align*}
v_{(a,b,c)}\overline{H_1} 
&=\left\{\begin{array}{ll} q v_{(a,b,c)}, & \mbox { if }  a=b,\\
v_{(b,a,c)}, & \mbox { if } a>b,\\
v_{(b,a,c)}-(q^{-1}-q)v_{(a,b,c)}, & \mbox { if } a<b;
\end{array}\right.
\\
v_{(a,b,c)}\overline{H_2} 
&=\left\{\begin{array}{ll} q v_{(a,b,c)}, & \mbox { if } a=0,\\
v_{(-a,-c,-b)}, & \mbox { if } a<0,\\
v_{(-a,-c,-b)}-(q^{-1}-q)v_{(a,b,c)}, & \mbox { if } a>0.
\end{array}\right.
\end{align*}

From the above formulas together with \eqref{H1action}--\eqref{H2action}, we can compute
the following formulas for the bar involution on every standard basis element $v_{\mi}$ of $\T$: for all $b>a>0$,
\begin{align*}
\overline{v_{(0,0,0)}}=&v_{(0,0,0)};
\\
\overline{v_{(0,a,-a)}}=&v_{(0,a,-a)};\\
\overline{v_{(a,0,-a)}}=&v_{(a,0,-a)}-(q^{-1}-q)v_{(0,a,-a)};\\
\overline{v_{(-a,a,0)}}=&v_{(-a,a,0)}-(q^{-1}-q)v_{(a,0,-a)}-(q^{-1}-q)qv_{(0,a,-a)};\\
\overline{v_{(a,-a,0)}}=&v_{(a,-a,0)}-(q^{-1}-q)v_{(-a,a,0)}-(q^{-1}-q)qv_{(a,0,-a)}-(q^{-1}-q)q^2v_{(0,a,-a)};\\
\overline{v_{(-a,0,a)}}=&v_{(-a,0,a)}-(q^{-1}-q)v_{(a,-a,0)}-
(q^{-1}-q)q v_{(-a,a,0)}-(q^{-1}-q)q^2v_{(a,0,-a)}\\
&-(q^{-1}-q)q^3v_{(0,a,-a)};\\
\overline{v_{(0,-a,a)}}=&v_{(0,-a,a)}-(q^{-1}-q)v_{(-a,0,a)}-(q^{-1}-q)qv_{(a,-a,0)}-
(q^{-1}-q)q^2v_{(-a,a,0)}\\&-(q^{-1}-q)q^3v_{(a,0,-a)}-(q^{-1}-q)q^4v_{(0,a,-a)};\\
\overline{v_{(a,a,-2a)}}=&v_{(a,a,-2a)};\\
\overline{v_{(-a,2a,-a)}}=&v_{(-a,2a,-a)}-(q^{-1}-q)v_{(a,a,-2a)};\\
\overline{v_{(2a,-a,-a)}}=&v_{(2a,-a,-a)}-(q^{-1}-q)v_{(-a,2a,-a)}-(q^{-1}-q)qv_{(a,a,-2a)};\\
\overline{v_{(-2a,a,a)}}=&v_{(-2a,a,a)}-(q^{-1}-q)v_{(2a,-a,-a)}-(q^{-1}-q)qv_{(-a,2a,-a)}-(q^{-1}-q)q^2v_{(a,a,-2a)};\\
\overline{v_{(a,-2a,a)}}=&v_{(a,-2a,a)}-(q^{-1}-q)v_{(-2a,a,a)}-
(q^{-1}-q)qv_{(2a,-a,-a)}\\
&-(q^{-1}-q)q^2v_{(-a,2a,-a)}-(q^{-1}-q)q^3v_{(a,a,-2a)};\\
\overline{v_{(-a,-a,2a)}}=&v_{(-a,-a,2a)}-(q^{-1}-q)v_{(a,-2a,a)}-(q^{-1}-q)qv_{(-2a,a,a)}-
(q^{-1}-q)q^2v_{(2a,-a,-a)}\\
&-(q^{-1}-q)q^3v_{(-a,2a,-a)}-(q^{-1}-q)q^4v_{(a,a,-2a)};\\
\overline{v_{(a,b,-b-a)}}=&v_{(a,b,-b-a)};\\
\overline{v_{(b,a,-b-a)}}=&v_{(b,a,-b-a)}-(q^{-1}-q)v_{(a,b,-b-a)};\\
\overline{v_{(-a,b+a,-b)}}=&v_{(-a,b+a,-b)}-(q^{-1}-q)v_{(a,b,-b-a)};\\
\overline{v_{(-b,b+a,-a)}}=&v_{(-b,b+a,-a)}-(q^{-1}-q)(v_{(b,a,-b-a)}
+v_{(-a,b+a,-b)})+(q^{-1}-q)^2v_{(a,b,-b-a)};\\
\overline{v_{(b+a,-a,-b)}}=&v_{(b+a,-a,-b)}-(q^{-1}-q)(v_{(b,a,-b-a)}
+v_{(-a,b+a,-b)})+(q^{-1}-q)^2v_{(a,b,-b-a)};\\
\overline{v_{(b+a,-b,-a)}}=&v_{(b+a,-b,-a)}-(q^{-1}-q)(v_{(-b,b+a,-a)}+v_{(b+a,-a,-b)})\\
&+(q^{-1}-q)^2(v_{(-a,b+a,-b)}+v_{(b,a,-b-a)})\\
&-(q^{-1}-q)(q^{-2}-1+q^2)v_{(a,b,-b-a)};\\
\overline{v_{(-b-a,b,a)}}=&v_{(-b-a,b,a)}-(q^{-1}-q)(v_{(-b,b+a,-a)}+v_{(b+a,-a,-b)})\\
&+(q^{-1}-q)^2(v_{(-a,b+a,-b)}+v_{(b,a,-b-a)})\\
&-(q^{-1}-q)(q^{-2}-1+q^2)v_{(a,b,-b-a)};\\
\overline{v_{(-b-a,a,b)}}=&v_{(-b-a,a,b)}-(q^{-1}-q)(v_{(b+a,-b,-a)}+v_{(-b-a,b,a)})\\
&+(q^{-1}-q)^2(v_{(b+a,-a,-b)}+v_{(-b,b+a,-a)})\\
&-(q^{-1}-q)(q^{-2}-1+q^2)(v_{(-a,b+a,-b)}+v_{(b,a,-b-a)})\\
&+(q^{-1}-q)^2(q^{-2}+q^2)(v_{(a,b,-b-a)});\\
\overline{v_{(b,-b-a,a)}}=&v_{(b,-b-a,a)}-(q^{-1}-q)(v_{(b+a,-b,-a)}+v_{(-b-a,b,a)})\\
&+(q^{-1}-q)^2(v_{(b+a,-a,-b)}+v_{(-b,b+a,-a)})\\
&-(q^{-1}-q)(q^{-2}-1+q^2)(v_{(-a,b+a,-b)}+v_{(b,a,-b-a)})\\
&+(q^{-1}-q)^2(q^{-2}+q^2)(v_{(a,b,-b-a)});\\
\overline{v_{(-b,-a,b+a)}}=&v_{(-b,-a,b+a)}-(q^{-1}-q)(v_{(-b-a,a,b)}+v_{(b,-b-a,a)})\\
&+(q^{-1}-q)^2(v_{(-b-a,b,a)}+v_{(b+a,-b,-a)})\\
&-(q^{-1}-q)(q^{-2}-1+q^2)(v_{(-b,b+a,-a)}+v_{(b+a,-a,-b)})\\
&+(q^{-1}-q)^2(q^{-2}+q^2)(v_{(-a,b+a,-b)}+v_{(b,a,-b-a)})\\
&-(q^{-1}-q)(q^{-4}-q^{-2}+1-q^2+q^4)v_{(a,b,-b-a)};\\
\overline{v_{(a,-b-a,b)}}=&v_{(a,-b-a,b)}-(q^{-1}-q)(v_{(-b-a,a,b)}+v_{(b,-b-a,a)})\\
&+(q^{-1}-q)^2(v_{(-b-a,b,a)}+v_{(b+a,-b,-a)})\\
&-(q^{-1}-q)(q^{-2}-1+q^2)(v_{(-b,b+a,-a)}+v_{(b+a,-a,-b)})\\
&+(q^{-1}-q)^2(q^{-2}+q^2)(v_{(-a,b+a,-b)}+v_{(b,a,-b-a)})\\
&-(q^{-1}-q)(q^{-4}-q^{-2}+1-q^2+q^4)v_{(a,b,-b-a)};\\
\overline{v_{(-a,-b,b+a)}}=&v_{(-a,-b,b+a)}-(q^{-1}-q)(v_{(-b,-a,b+a)}+v_{(a,-b-a,b)})
\\&+(q^{-1}-q)^2(v_{(-b-a,a,b)}+v_{(b,-b-a,a)})\\
&-(q^{-1}-q)(q^{-2}-1+q^2)(v_{(-b-a,b,a)}+v_{(b+a,-b,-a)})\\
&+(q^{-1}-q)^2(q^{-2}+q^2)(v_{(-b,b+a,-a)}+v_{(b+a,-a,-b)})\\
&-(q^{-1}-q)(q^{-4}-q^{-2}+1-q^2+q^4)(v_{(-a,b+a,-b)}+v_{(b,a,-b-a)})\\
&+(q^{-1}-q)^2(q^{-4}+1+q^4)v_{(a,b,-b-a)}.
\end{align*}

\subsection{Formulas for the actions of $e_a, f_a$ and $t$ on $\T_n$}
 \label{app:B}

Recall the sets ${}^iW$ of minimal length right coset representatives in $W$ with respect to the subgroups $\langle s_i\rangle$ $(i=1,2)$ are:
\begin{align*}
{}^1W &=\{1,s_2,s_2s_1,s_2s_1s_2,s_2s_1s_2 s_1, s_2s_1s_1s_1s_2\},\\
{}^2W &=\{1,s_1,s_1s_2,s_1s_2s_1,s_1s_2s_1s_2, s_1s_2s_1s_2s_1\}.
\end{align*}
The explicit formulas for the actions of $e_a, f_a$ and $t$ on the standard basis elements of $\T_n$ are given as follows:
\begin{align}
  \label{eq:e}
\begin{split}
e_{0}v_{(-1,-1,2)}&=v_{(-1,0,1)}+q^{-1}v_{(0,-1,1)};\\
e_{0}v_{(1,-2,1)}&=v_{(1,-1,0)}+v_{(0,-1,1)};\\
e_{0}v_{(-2,1,1)}&=v_{(-1,1,0)}+v_{(-1,0,1)};\\
e_{0}v_{(2,-1,-1)}&=v_{(1,0,-1)}+v_{(1,-1,0)};\\
e_{0}v_{(-1,2,-1)}&= v_{(0,1,-1)}+v_{(-1,1,0)};\\
e_{0}v_{(1,1,-2)}&=q v_{(0,1,-1)}+v_{(1,0,-1)};\\
e_0v_{(0,1,-1)} &=q e_0v_{(1,0,-1)}=q^{2}e_0v_{(-1,1,0)}
\\ &=q^{3}e_0v_{(1,-1,0)}=q^{4}e_0v_{(-1,0,1)}=q^5 e_0v_{(0,-1,1)}=v_{(0,0,0)};\\
e_0v_{(1,b,-b-1) \tau}&=qv_{(-1,b+1,-b) \tau}=v_{(0,b,-b) \tau}, \quad\forall 0\leq b\neq a,a+1; \tau\in {}^2W;\\
e_{a}v_{(a+1,a+1,-2a-2) \tau}&=q v_{(a,a+1,-2a-1) \tau}+v_{(a+1,a,-2a-1) \tau}, \quad\forall a\geq 1; \tau\in {}^1W;\\
e_{a}v_{(a,a+1,-2a-1) \tau}&=qe_{a}v_{(a+1,a,-2a-1) \tau}=v_{(a,a,-2a) \tau}, \quad \forall a\geq 1; \tau\in {}^1W;\\
e_av_{(a+1,b,-a-b-1) \tau}&=v_{(a,b,-a-b) \tau}, \quad\forall a>0, 0\leq b\neq a,a+1; \tau\in W;\\
e_a v_{(b,c,-b-c) \tau}&=0, \quad{\forall, 0\leq b,c\neq a+1; \tau\in W};
\end{split}
\end{align}

\begin{align}
  \label{eq:f}
\begin{split}
f_0v_{(0,0,0)}&=v_{(0,-1,1)}+q v_{(-1,0,1)}+q^2v_{(1,-1,0)}
\\
& \qquad\qquad\qquad +q^3v_{(-1,1,0)}+q^4v_{(1,0,-1)}+q^5 v_{(0,1,-1)};\\
f_0v_{(0,-1,1)}&=v_{(1,-2,1)}+q^{-1}v_{(-1,-1,2)};\\
f_0v_{(-1,0,1)}&=v_{(-2,1,1)}+v_{(-1,-1,2)};\\
f_0v_{(1,-1,0)}&=v_{(2,-1,-1)}+v_{(1,-2,1)};\\
f_0v_{(-1,1,0)}&=v_{(-1,2,-1)}+v_{(-2,1,1)};\\
f_0v_{(1,0,-1)}&=v_{(1,1,-2)}+v_{(2,-1,-1)};\\
f_0v_{(0,1,-1)}&=qv_{(1,1,-2)}+v_{(-1,2,-1)};\\
f_0v_{(0,b,-b) \tau}&=q v_{(1,b,-b-1) \tau}+v_{(-1,b+1,-b) \tau},\quad \forall b\geq 2; \tau\in {}^2W;\\
f_{a} v_{(a,a,-2a) \tau}&=q v_{(a,a+1,-2a-1) \tau}+v_{(a+1,a,-2a-1) \tau}, \quad\forall a\geq 1; \tau\in {}^1W;\\
f_{a} v_{(a,a+1,-2a-1) \tau}&=q f_{a} v_{(a+1,a,-2a-1) \tau}=v_{(a+1,a+1,-2a-2) \tau}, \quad\forall a\geq 1; \tau\in {}^1W;\\
f_a v_{(a,b,-a-b) \tau}&=v_{(a+1,b,-a-b-1) \tau}, \quad\forall a>0, 0\leq b\neq a,a+1; \tau\in W;\\
f_a v_{(b,c,-b-c) \tau}&=0, \quad{\forall, 0\leq b,c\neq a; \tau\in W};
\end{split}
\end{align}

\begin{align}
  \label{eq:t}
\begin{split}
tv_{(0,-1,1)}&=q^{-2}v_{(0,-1,1)}+q^{-1}v_{(-1,0,1)}+v_{(1,-1,0)};\\
tv_{(-1,0,1)}&=v_{(-1,0,1)}+v_{(-1,1,0)}+q^{-1}v_{(0,-1,1)};\\
tv_{(1,-1,0)}&=v_{(1,-1,0)}+v_{(0,-1,1)}+v_{(1,0,-1)};\\
tv_{(-1,1,0)}&=v_{(-1,1,0)}+v_{(-1,0,1)}+v_{(0,1,-1)};\\
tv_{(1,0,-1)}&=v_{(1,0,-1)}+v_{(1,-1,0)}+q v_{(0,1,-1)};\\
tv_{(0,1,-1)}&=q^{2}v_{(0,1,-1)}+q v_{(1,0,-1)}+v_{(-1,1,0)};\\
t v_{(a,a+1,-2a-1) \tau}&=qt v_{(a+1,a,-2a-1) \tau}
\\
&=q v_{(a,a+1,-2a-1) \tau}+v_{(a+1,a,-2a-1) \tau},
\;\forall a\geq1; \tau\in {}^1W;\\
tv_{(a,b,-a-b) \tau}&=v_{(a,b,-a-b)},\quad \forall 0<a+1<b; \tau\in W.\\
\end{split}
\end{align}

\subsection{Proof of Proposition \ref{generators}}
  \label{app:C}

The main results in Section~\ref{sec:Schur} on the $q$-Schur algebra still make sense for its $q=1$ specialization.
Recall the Schur algebra over $\mathbb{Z}$:
\[
\Sc(n)=\End_{W}(\T_n^1).
\]
Denote by $\Sc(n)_{\Q}=\Q\otimes_{\Z}\Sc(n)$ the schur algebra over $\Q$.
In particular, the results in Lemma~\ref{lem:coord}, Proposition~\ref{schurbasis}, Proposition~\ref{prop:basis}, Theorem~\ref{thm:double} as well as the definition of $e_a, f_a, t$ in \eqref{eq:eftq} carry over for $\Sc(n)$ and its module $\T_n^1$.

\begin{prop}
  \label{prop:generator1}
The Schur algebra $\Sc(n)_{\mathbb{Q}}$ is generated by $e_a, f_a (0\leq a<n)$ and $t$.
\end{prop}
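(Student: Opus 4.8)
The plan is to show that the subalgebra $\A' \subseteq \Sc(n)_\Q$ generated by $\{e_a, f_a \mid 0\le a<n\}$ and $t$ coincides with the whole $q$-Schur algebra. Since by Proposition~\ref{schurbasis} (in its $q=1$ form) the algebra $\Sc(n)_\Q$ has a basis indexed by $\XX_n^{(2)}$, and by Proposition~\ref{prop:basis} equivalently by $\Xi$ (the idempotented standard basis $\phi^g_{\gamma\nu}$), it suffices to produce every basis element $\phi^g_{\gamma\nu}$ inside $\A'$. First I would record that the pairwise-orthogonal idempotents $\id_\gamma$ ($\gamma\in\Lambda_n$) lie in $\A'$: indeed, examining the formulas \eqref{eq:e}--\eqref{eq:t} for the action of $e_a, f_a, t$ on the standard basis of $\T_n$, one sees that $e_af_a$ and $f_ae_a$ act on each $\T_\gamma$ by a scalar plus nilpotent, so suitable polynomials in the $e_af_a$, $f_ae_a$, $e_{a}f_{a+1}\cdots$, and $t$ separate the finitely many blocks; alternatively, note that $t$ already acts diagonalizably with distinct ``scaling'' behavior on the singular versus regular blocks, and the commuting family $\{e_af_a\}$ refines this. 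Once the $\id_\gamma$ are available, it is enough to generate, for each pair $(\gamma,\nu)$, the space $\id_\gamma \Sc(n)_\Q \id_\nu = \Hom_W(\T_\nu, \T_\gamma)$.

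The key step is then a \emph{connectivity} argument on the set $\Lambda_n$ of $\varepsilon$-weights, viewed as a graph where $\gamma, \gamma'$ are adjacent if $\gamma' = \gamma \pm(\varepsilon_{a+1}-\varepsilon_a)$ for some $a$ (an $e_a$- or $f_a$-edge) or if they are equal (the $t$-loop, which moreover acts nontrivially \emph{within} a block, mixing the two-element sub-block $\{(a,a+1,\ldots),(a+1,a,\ldots)\}$). Using \eqref{eq:Lng2}, $\Lambda_n = \{\varepsilon_a+\varepsilon_b \mid 0\le a\le b\le n\}$, and one checks directly that $e_a, f_a$ allow one to move $b\mapsto b+1$ in the ``free'' coordinate, and $e_0, f_0$ together with $t$ handle the boundary where $a$ or $b$ hits $0$ (the special small-rank formulas at the top of \eqref{eq:e}--\eqref{eq:t} are exactly what is needed to connect the singleton block $(0,0,0)$ and the $W/\langle s_i\rangle$-blocks to the regular ones). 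Concretely, for a basis element $\phi^g_{\gamma\nu}$ one factors the double-coset representative $g$ as a product of simple reflections, writes $g = g_1 g_2 \cdots$, and builds $\phi^g_{\gamma\nu}$ as a product of ``elementary'' homomorphisms each of which raises or lowers one coordinate by one step — i.e.\ as a monomial in the $e_a\id$, $f_a\id$, $t\id$ — up to lower-order terms supported on smaller-length $g'$, which are then removed inductively on $\ell(g)$.

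The main obstacle I anticipate is the second half of that induction: the generators $e_a, f_a, t$ produce each $\phi^g_{\gamma\nu}$ only modulo a correction term that is a linear combination of $\phi^{g'}_{\gamma'\nu'}$ with $g'$ shorter or supported on intermediate blocks, and one must verify that these corrections are themselves already in $\A'$ and that the triangularity is genuinely strict (no cycles). In type $G_2$ the Bruhat poset of $W$ is small enough (see the Bruhat graph \eqref{Bruhat}) that this can be settled by an explicit finite check: there are only $6$ blocks-types and $6$ coset shapes, so one tabulates the action of each product $e_{a_1}^{\epsilon_1}\cdots$ on the finitely many standard basis vectors using \eqref{eq:e}--\eqref{eq:t}, and reads off that the resulting matrices span $\id_\gamma\Sc(n)_\Q\id_\nu$ for every $\gamma,\nu$. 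This is the computational content referred to in Appendix~\ref{app:C}; the conceptual skeleton is the idempotent-extraction plus the graph-connectivity/length-induction just described, and the rest is bookkeeping with the explicit formulas.
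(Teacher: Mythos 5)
Your overall strategy -- extract the block idempotents, then generate each $\id_\gamma\,\Sc(n)_\Q\,\id_\nu=\Hom_W(\T^1_\nu,\T^1_\gamma)$, with the final verification done by finite computation using the formulas \eqref{eq:e}--\eqref{eq:t} -- is the same in spirit as the paper's, which proves the proposition by exhibiting, for every $(\mi,\mj)\in\XX_2^{(2)}$, an explicit polynomial in $e_0,e_1,f_0,f_1,t$ equal to $\eta_{\mi,\mj}$ (Appendix~\ref{app:C}), after reducing to $n=2$ on the grounds that all four types of parabolic stabilizers already occur there. However, as written your argument has a genuine gap: the two steps you treat as conceptual scaffolding are precisely where all the content lies, and neither is established. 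First, the idempotent extraction is only asserted (``suitable polynomials \ldots separate the finitely many blocks''); the paper instead produces the singular-block idempotents by concrete formulas such as $\eta_{(0,0,0)(0,0,0)}=\tfrac{1}{24}e_0^2f_0^2$ and $\eta_{(0,1,-1)(0,1,-1)}=\tfrac12 e_1e_0f_0f_1$, and obtains the regular-block idempotent not spectrally but as $1$ minus the sum of the others (equation \eqref{eq:appendix}), with a different expression $e_2e_1f_1f_2$ needed when $n>2$ -- none of which follows from the eigenvalue-separation heuristic without computation.

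Second, and more seriously, the ``monomial in $e_a\id, f_a\id, t\id$ equals $\phi^g_{\gamma\nu}$ plus strictly lower-length corrections'' claim is not justified and is not how the generation actually works. Within a regular block the target is the full $12$-dimensional algebra $\Q W$, and graph connectivity of $\Lambda_n$ says nothing about spanning it: the only generator preserving a block is $t$, so one must exploit round-trip products through neighboring blocks, and the paper's formulas show that the required expressions are not monomials-with-triangular-corrections but things like $(f_0e_0-2)$, $((f_0e_0-2)^2-2)$, $\tfrac12\bigl(f_0\eta_{(0,-1,1)(0,1,-1)}e_0\eta_{(1,1,-2)(1,1,-2)}-\eta_{(-2,1,1)(1,1,-2)}\bigr)$, and products of two previously constructed non-generator elements such as $\eta_{(3,-1,-2)(1,2,-3)}=\eta_{(-1,3,-2)(1,2,-3)}\eta_{(2,1,-3)(1,2,-3)}$. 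Your proposed induction on $\ell(g)$ could in principle fail (a generator applied to a basis element produces several terms whose leading coefficients may vanish or coincide), and you yourself defer exactly this point to ``an explicit finite check'' -- but that check \emph{is} the proof, and it is not carried out. To close the gap you would either have to prove the strict triangularity and the spanning of each $\id_\gamma\Sc(n)_\Q\id_\nu$ honestly, or do what the paper does: reduce to $n=2$ (justifying why the same formulas, with shifted indices, handle all blocks for general $n$) and then write down the explicit expressions.
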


\begin{proof}
Without loss of generality, we only need to prove the case of $n=2$ since all four types of the parabolic subgroups $ W_J$ (see \eqref{eq:PP}) appear in this case.
That is, to prove that  $\eta_{\mi,\mj}$ can be generated by $e_0, e_1, f_0, f_1$ and $t$ for any $(\mi,\mj)\in X_2^{(2)}$.
Indeed, we have
\begin{align}
\eta_{(0,0,0)(0,0,0)}&=\frac{1}{24}e_0^2 f_0^2; \notag \\
\eta_{(0,1,-1)(0,1,-1)}&= \frac{1}{2} e_1e_0f_0f_1, \notag \\
\eta_{(1,0,-1)(0,1,-1)}&=(t-1)\eta_{(0,1,-1)(0,1,-1)},\nonumber\\
\eta_{(1,-1,0)(0,1,-1)}&=(t^2-2t-1)\eta_{(0,1,-1)(0,1,-1)},\nonumber\\
\eta_{(0,-1,1)(0,1,-1)}&=\frac{1}{2}(t-1)(t^2-2t-2)\eta_{(0,1,-1)(0,1,-1)};\nonumber\\
\eta_{(0,1,-1)(0,0,0)}&=f_0\eta_{(0,0,0)(0,0,0)}, \notag  \\
\eta_{(0,0,0)(0,1,-1)}&=e_0\eta_{(0,1,-1)(0,1,-1)};\nonumber\\
\eta_{(1,1,-2)(1,1,-2)}&=\frac{1}{4}e_1^2f_1^2,\notag \\
\eta_{(-1,2,-1)(1,1,-2)}&=(f_0e_0-2)\eta_{(1,1,-2)(1,1,-2)},\nonumber\\
\eta_{(-2,1,1)(1,1,-2)}&=((f_0e_0-2)^2-2)\eta_{(1,1,-2)(1,1,-2)},\nonumber\\
\eta_{(-1,-1,2)(1,1,-2)}&=\frac{1}{2}(f_0\eta_{(0,-1,1)(0,1,-1)}e_0\eta_{(1,1,-2)(1,1,-2)}-\eta_{(-2,1,1)(1,1,-2)});\nonumber\\
\eta_{(1,1,-2)(0,0,0)}&=\frac{1}{2}f_0^2, \notag \\
\eta_{(0,0,0)(1,1,-2)}&=\frac{1}{2}e_0^2,\nonumber\\
\eta_{(1,1,-2)(0,1,-1)}&=f_0\eta_{(0,1,-1)(0,1,-1)},\nonumber\\
\eta_{(2,-1,-1)(0,1,-1)}&=f_0(e_0f_0-3)\eta_{(0,1,-1)(0,1,-1)},\nonumber\\
\eta_{(1,-2,1)(0,1,-1)}&=(f_0e_0-1)\eta_{(2,-1,-1)(0,1,-1)}-f_0(e_0f_0-2)\eta_{(0,1,-1)(0,1,-1)},\nonumber\\
\eta_{(0,1,-1)(1,1,-2)}&=e_0\eta_{(1,1,-2)(1,1,-2)},\nonumber\\
\eta_{(-1,1,0)(1,1,-2)}&=e_0(f_0e_0-3)\eta_{(1,1,-2)(1,1,-2)}\nonumber\\
\eta_{(-1,0,1)(1,1,-2)}&=(e_0f_0-1)\eta_{(-1,1,0)(1,1,-2)}-e_0(f_0e_0-2)\eta_{(1,1,-2)(1,1,-2)};\nonumber\\
\eta_{(0,2,-2)(0,2,-2)}&=\frac{1}{2}e_0f_0f_1e_1, \notag \\
\eta_{(2,0,-2)(0,2,-2)}&=f_1\eta_{(1,0,-1)(0,1,-1)}e_1,\nonumber\\
\eta_{(2,-2,0)(0,2,-2)}&=f_1\eta_{(1,-1,0)(0,1,-1)}e_1,\nonumber\\
\eta_{(0,-2,2)(0,2,-2)}&=f_1\eta_{(0,-1,1)(0,1,-1)}e_1;\nonumber\\
\eta_{(0,2,-2) w,(a,b,c)}&=f_1\eta_{(0,1,-1) w,(a,b,c)}, \notag \\
\eta_{(a,b,c) w,(0,2,-2)}&=\eta_{(a,b,c) w,(0,1,-1)}e_1,\nonumber\\
 \forall w\in W,& (a,b,c)=(0,0,0),(0,1,-1),(1,1,-2);\nonumber
\end{align}
\begin{align}
\label{eq:appendix}
\eta_{(1,2,-3)(1,2,-3)}&=1-\eta_{(0,0,0)(0,0,0)}-\eta_{(0,1,-1)(0,1,-1)}-\eta_{(1,1,-2)(1,1,-2)}\\
&\quad-\eta_{(0,2,-2)(0,2,-2)}-\eta_{(2,2,-4)(2,2,-4)}; \nonumber
\end{align}
\begin{align}
\eta_{(2,1,-3)(1,2,-3)}&=(t-1)\eta_{(1,2,-3)(1,2,-3)},\nonumber\\
\eta_{(-1,3,-2)(1,2,-3)}&=(f_0f_1e_1e_0-1)\eta_{(1,2,-3)(1,2,-3)},\nonumber\\
\eta_{(-2,3,-1)(1,2,-3)}&=(t-1)\eta_{(-1,3,-2)(1,2,-3)},\nonumber\\
\eta_{(3,-1,-2)(1,2,-3)}&=\eta_{(-1,3,-2)(1,2,-3)}\eta_{(2,1,-3)(1,2,-3)},\nonumber\\
\eta_{(3,-2,-1)(1,2,-3)}&=(t-1)\eta_{(3,-1,-2)(1,2,-3)},\nonumber\\
\eta_{(1,-3,2)(1,2,-3)}&=\eta_{(-2,3,-1)(1,2,-3)}\eta_{(3,-2,-1)(1,2,-3)},\nonumber\\
\eta_{(2,-3,1)(1,2,-3)}&=(t-1)\eta_{(1,-3,2)(1,2,-3)},\nonumber\\
\eta_{(-3,2,1)(1,2,-3)}&=\eta_{(-1,3,-2)(1,2,-3)}\eta_{(-2,3,-1)(1,2,-3)},\nonumber\\
\eta_{(-3,1,2)(1,2,-3)}&=(t-1)\eta_{(-3,2,1)(1,2,-3)},\nonumber\\
\eta_{(-1,-2,3)(1,2,-3)}&=\eta_{(1,-3,2)(1,2,-3)}\eta_{(-1,3,-2)(1,2,-3)},\nonumber\\
\eta_{(-2,-1,3)(1,2,-3)}&=(t-1)\eta_{(-1,-2,3)(1,2,-3)};\nonumber\\
\eta_{(0,0,0)(1,2,-3)}&=e_0e_1e_0, \notag \\
\eta_{(0,1,-1) w,(1,2,-3)}&=e_1e_0\eta_{(1,2,3) w,(1,2,-3)},\nonumber\\
\eta_{(1,1,-2) w,(1,2,-3)}&=e_1\eta_{(1,2,-3) w,(1,2,-3)},\nonumber\\
\eta_{(0,2,-2) w,(1,2,-3)}&=e_0\eta_{(1,2,-3) w,(1,2,-3)},\nonumber\\
\eta_{(1,2,-3),(0,0,-0)}&=f_0f_1f_0,\nonumber\\
\eta_{(1,2,-3) w,(0,1,-1)}&=\eta_{(1,2,-3) w,(1,2,-3)}f_0f_1,\nonumber\\
\eta_{(1,2,-3) w,(1,1,-2)}&=\eta_{(1,2,-3) w,(1,2,-3)}f_1,\nonumber\\
\eta_{(1,2,-3) w,(0,2,-2)}&=\eta_{(1,2,-3) w,(1,2,-3)}f_0,\quad \forall w\in W;\nonumber\\
\eta_{(2,2,-4)(2,2,-4)}&=\frac{1}{4}f_1^2e_1^2, \notag \\
\eta_{(-2,4,-2)(2,2,-4)}&=\frac{1}{4}f_1^2\eta_{(-1,2,-1)(1,1,-2)}e_1^2,\nonumber\\
\eta_{(-4,2,2)(2,2,-4)}&=\frac{1}{4}f_1^2\eta_{(-2,1,1)(1,1,-2)}e_1^2,\nonumber\\
\eta_{(-2,-2,4)(2,2,-4)}&=\frac{1}{4}f_1^2\eta_{(-1,-1,2)(1,1,-2)}e_1^2;\nonumber\\
\eta_{(2,2,-4) w,(a,b,c)}&=\frac{1}{2}f_1^2\eta_{(1,1,-2) w,(a,b,c)}, \notag \\
\eta_{(a,b,c) w,(2,2,-4)}&=\frac{1}{2}\eta_{(a,b,c) w,(1,1,-2)}e_1^2,\nonumber\\
 \forall w\in W,& (a,b,c)=(0,0,0),(0,1,-1),(0,2,-2),(1,1,-2),(1,2,-3).\nonumber
\end{align}
We remark that, for $n>2$, the equation for $\eta_{(1,2,-3)(1,2,-3)}$ in \eqref {eq:appendix} should be replaced by $\eta_{(1,2,-3)(1,2,-3)}=e_2e_1f_1f_2.$
(Note that $e_2$ and $f_2$ do not make sense for $n=2$.)
\end{proof}

We are ready to prove Proposition~ \ref{generators}, which is a $q$-analogue of Proposition~\ref{prop:generator1} above.

\begin{proof} [Proof of Proposition \ref{generators}]
In this proof, we shall write the $q=1$ version of $e_a, f_a, t$ as $e_a', f_a', t'$ (which are elements in the Schur algebra $\Sc(n)$). Denote by $N=\dim_\Q \Sc(n)_\Q =\dim_{\Q(q)} \Sc_q(n)_\Q$.
By Proposition~\ref{prop:generator1}, there exist polynomials $p_i(e_a',f_a',t')$, 
for $1\le i\le N$, which form a $\Q$-basis for $\Sc(n)_\Q$.

We claim that $\{p_i(e_a,f_a,t)~|~1\le i\le N\}$ form an $\Q(q)$-basis for $\Sc_q(n)_\Q$. Indeed, for dimension reason it suffices to show that they are linearly independent. Assume
\begin{equation}  \label{eq:indep}
\sum_{i} c_i(q) p_i(e_a,f_a,t)=0,
\end{equation}
for some $c_i(q) \in \Q(q)$, not all zero. By clearing common denominators we may assume all $c_i(q) \in \Q[q]$, and then by canceling the common factors we may further assume that the nonzero $c_i(q)$ are relatively prime. In particular, $c_i(1)\neq 0$ for some $i$. Taking the specialization at $q=1$ for \eqref{eq:indep}, we obtain that $\sum_{i} c_i(1) p_i(e_a',f_a',t')=0$, a contradiction. Hence the claim is proved, and Proposition \ref{generators} follows.
\end{proof}


\end{document}